\newtheorem{theorem}{Theorem}[section]
\newtheorem{corollary}[theorem]{Corollary}
\newtheorem{lemma}[theorem]{Lemma}
\newtheorem{proposition}[theorem]{Proposition}
\newtheorem{definition}[theorem]{Definition}
\newtheorem{remark}[theorem]{Remark}
\newtheorem*{theorem*}{Theorem}
\newtheorem*{lemma*}{Lemma}
\newtheorem*{remark*}{Remark}
\newtheorem*{definition*}{Definition}
\newtheorem*{proposition*}{Proposition}
\newtheorem*{corollary*}{Corollary}
\numberwithin{equation}{section}
\newcommand{\real}{\mathbb{R}}
\let\ced=\c         
\def\e{\varepsilon}        
\def\ca{{\cal A}}
\def\cf{{\cal F}}
\def\cp{{\cal P}}
\newcommand{\RR}{\mathbb{R}}
\def\qed{\,\unskip\kern 6pt \penalty 500
\raise -2pt\hbox{\vrule \vbox to8pt{\hrule width 6pt
\vfill\hrule}\vrule}\par}
\definecolor{darkblue}{rgb}{0.05, .05, .65}
\definecolor{darkgreen}{rgb}{0.1, .65, .1}
\definecolor{darkred}{rgb}{0.8,0,0}
\newcommand{\beqn}{\begin{equation}}
\newcommand{\eeqn}{\end{equation}}
\newcommand{\bear}{\begin{eqnarray}}
\newcommand{\eear}{\end{eqnarray}}
\newcommand{\bean}{\begin{eqnarray*}}
\newcommand{\eean}{\end{eqnarray*}}
\begin{document}

\title{\huge \bf  Positivity, decay, and extinction for a singular diffusion equation with gradient absorption}

\author{
\Large Razvan Gabriel Iagar\,\footnote{Institut de Math\'ematiques
de Toulouse, CNRS UMR~5219, Universit\'e de Toulouse, F--31062
Toulouse Cedex 9, France.},\footnote{Institute of Mathematics of the
Romanian Academy, P.O. Box 1-764, RO-014700, Bucharest, Romania,
\textit{e-mail:} razvan.iagar@imar.ro.}
\\[4pt] \Large Philippe Lauren\c cot\,\footnote{Institut de
Math\'ematiques de Toulouse, CNRS UMR~5219, Universit\'e de
Toulouse, F--31062 Toulouse Cedex 9, France. \textit{e-mail:}
Philippe.Laurencot@math.univ-toulouse.fr}\\ [4pt] }
\date{\today}
\maketitle

\begin{abstract}
We study qualitative properties of non-negative solutions to the
Cauchy problem for the fast diffusion equation with gradient
absorption
\begin{equation*}
\partial_t u -\Delta_{p}u+|\nabla u|^{q}=0\quad \mbox{ in }\;\;
(0,\infty)\times\RR^N,
\end{equation*}
where $N\ge 1$, $p\in(1,2)$, and $q>0$. Based on gradient estimates
for the solutions, we classify the behavior of the solutions for
large times, obtaining either positivity as $t\to\infty$ for
$q>p-N/(N+1)$, optimal decay estimates as $t\to\infty$ for $p/2\le
q\le p-N/(N+1)$, or extinction in finite time for $0<q<p/2$. In
addition, we show how the diffusion prevents extinction in finite
time in some ranges of exponents where extinction occurs for the
non-diffusive Hamilton-Jacobi equation.
\end{abstract}

\vspace{2.0 cm}

\noindent {\bf AMS Subject Classification:} 35B40, 35K67, 35K92,
35K10, 35B33, 49L25.

\medskip

\noindent {\bf Keywords:}  Singular diffusion, gradient absorption,
gradient estimates, extinction, $p$-Laplacian, viscosity solutions.

\newpage

\section{Introduction}

In this paper we study qualitative properties of the non-negative
continuous solutions to the following equation with singular
diffusion and gradient absorption
\begin{equation}\label{eq1}
\partial_t u -\Delta_{p}u+|\nabla u|^{q}=0, \quad (t,x)\in
Q_{\infty}:=(0,\infty)\times\real^N,
\end{equation}
where we consider $1<p<2$, $q>0$ and a non-negative initial
condition
\begin{equation}\label{a2}
u(0,x)=u_{0}(x), \quad x\in\RR^N.
\end{equation}
As usual, the $p$-Laplacian operator is defined by
\begin{equation*}
\Delta_{p}u=\hbox{div}(|\nabla u|^{p-2}\nabla u).
\end{equation*}
Equation \eqref{eq1}, when $p\in(1,2)$, is a quasilinear singular
diffusion equation (also known in the literature as the \emph{fast
$p$-Laplacian equation}), with a nonlinear absorption term depending
on the euclidean norm of the gradient. In recent years, both the
semilinear problem ($p=2$) and the degenerate diffusion-absorption
problem ($p>2$) have been investigated, with emphasis on the large
time behavior. It has been noticed that the asymptotic behavior as
$t\to\infty$ depends strongly on the value of $q>0$, and for $p=2$
there are many results available, see for example \cite{ATU04,
BKaL04, BLS01, BLSS02, BRV97, BGK04, GL07, Gi05}. From all these results,
an almost complete understanding of the large time behavior for the
semilinear case $p=2$ is now available. In particular, finite time
extinction takes place for $q\in (0,1)$ while the dynamics is either
solely dominated by the diffusion or is the result of a balance
between the diffusion and the absorption according to the value of
$q>1$.

More recently, the research has been extended to the degenerate case
$p>2$. In this range, the situation is very different: indeed, on
the one hand, the support of compactly supported solutions advances
in time with finite speed and interfaces appear  \cite{BtL08}. On
the other hand, there is a range of values of the parameter $q$,
namely $q\in (1,p-1]$, where the dynamics of \eqref{eq1}-\eqref{a2}
is solely governed by the gradient absorption \cite{ILV,LV07}, a
feature which cannot be observed in the semilinear case ($p=2$) for
$q>1$.

The purpose of this paper is to investigate the range $p\in (1,2)$,
called fast $p$-Laplacian diffusion, where the diffusion is no
longer degenerate but becomes singular when $\nabla u$ vanishes.
This case turns out to be more complicated and we first point out
that, even in the case of the diffusion equation
\begin{equation}\label{eq:diffusion}
\partial_t\Phi - \Delta_p \Phi=0 \quad \text{ in } \quad Q_\infty\,,
\end{equation}
important advances have been performed very recently, both in
constructing special solutions with optimal decay estimates, see
\cite{ISV, VazquezSmoothing} and in understanding regularity,
smoothing effects and other deep qualitative properties of the
solutions \cite{BIV}. All this previous knowledge is a good starting
point to investigate the competition between the fast $p$-Laplacian
diffusion and the gradient absorption terms. The behavior of
non-negative solutions $\Phi$ to the diffusion equation
\eqref{eq:diffusion} and of non-negative solutions $h$ to the
Hamilton-Jacobi equation
\begin{equation}\label{eq:hj}
\partial_t h + |\nabla h|^q=0 \quad \text{ in } \quad Q_\infty
\end{equation}
indeed differs markedly: in particular, starting from a compactly
supported initial condition, $\Phi$ becomes instantaneously
positive in $Q_\infty$ if $p\ge 2N/(N+1)$ while the support of $h$
stays the same for all times if $q>1$ or becomes empty after a
finite time if $q\in (0,1]$. It is thus of interest to figure out
how these two mechanisms compete in \eqref{eq1}.

\medskip

More specifically, the aim of this paper is to give a complete
picture of the qualitative properties of non-negative solutions to
\eqref{eq1}-\eqref{a2}, with respect to the following three types of
behaviors: either the solution remains positive in the limit, or it
decays to zero as $t\to\infty$ but is positive for finite times, or
finally it extinguishes after a finite time. In fact, we describe
the ranges, with respect to $p$ and $q$, where these phenomena
occur, and we also provide, in the cases where this is possible, a
quantitative measure of how the solution behaves, providing
estimates of decay rates or extinction rates.

The main tool for establishing such qualitative properties turns out
to be gradient estimates having generally the form
\begin{equation}\label{grad_est_abstract}
\|\nabla u^{\gamma}(t)\|_{\infty}\leq C\|u_0\|_{\infty}^{\delta} t^{-\beta},
\end{equation}
for suitable exponents $\gamma$, $\delta>0$, and $\beta>0$. Such
gradient estimates have been obtained in \cite{BL99,GGK03} for $p=2$
and $q>0$ and in \cite{BtL08} for $p>2$ and $q>1$ by a Bernstein
technique adapted from \cite{Be}, the exponent $\gamma$ depending on
$p$ and $q$ and ranging in $(0,1)$ for $p\ge 2$ and $q>1$. This last
property is of great interest as such estimates are clearly stronger
than an estimate on $\|\nabla u(t)\|_\infty$ and are at the basis of
the subsequent studies of the qualitative behavior of solutions to
\eqref{eq1} for $p\ge 2$. We shall establish similar gradient
estimates for \eqref{eq1} when $p$ and $q$ range in $(1,2)$ and
$(0,\infty)$, respectively. A particularly interesting new feature is
that the singular diffusion allows us to obtain gradient estimates
with negative exponents $\gamma$. As we shall see below, these
estimates have clearly a link with the positivity properties of the
solutions to \eqref{eq1} which are expected when the diffusion
dominates.

\medskip

\noindent \textbf{Notion of solution.} Owing to the nonlinear
reaction term $|\nabla u|^q$ involving the gradient of $u$, a
suitable notion of solution for Equation \eqref{eq1} is that of
\emph{viscosity solution}. Due to the singular character of
\eqref{eq1} at points where $\nabla u$ vanishes, the standard
definition of viscosity solution has to be adapted to deal with this
case \cite{IS, JLM, OS}. In fact, it requires to restrict the class
of comparison functions \cite{IS, OS} and we refer to
Definition~\ref{def:vs} for a precise definition. A remarkable
feature of this modified definition is that basic results about
viscosity solutions, such as comparison principle and stability
property, are still valid, see \cite[Theorem 3.9]{OS} (comparison
principle) and \cite[Theorem 6.1]{OS} (stability). The relationship
between viscosity solutions and other notions of solutions is
investigated in \cite{JLM}. From now on, by a solution to
\eqref{eq1}-\eqref{a2} we mean a viscosity solution in the sense of
Definition~\ref{def:vs} below.

\medskip

\noindent \textbf{Main results.}

For later use, we introduce the following notations for the critical
exponents
\begin{equation}\label{notations.crit}
p_c:=\frac{2N}{N+1}, \ p_{sc}:=\frac{2(N+1)}{N+3}, \
q_{\star}:=p-\frac{N}{N+1}
\end{equation}
and for several constants
\begin{equation}\label{notations.cons}
\begin{split}
k:=\frac{(2-p)[p(N+3)-2(N+1)]}{4(p-1)}, \ &\xi:=\frac{1}{q(N+1)-N},
\ \eta:=\frac{1}{N(p-2)+p},\\ &q_1:=\max\left\{p-1,\frac{N}{N+1}\right\},
\end{split}
\end{equation}
appearing frequently in our analysis. Throughout the paper, $C$,
$C'$, and $C_i$, $i\ge 1$, denote constants depending only on $N$,
$p$, and $q$. The dependence of these constants upon additional
parameters will be indicated explicitly.

Let us begin with basic decay estimates which are valid for general
non-negative Lipschitz continuous and integrable initial data
without any extra conditions.

\begin{theorem}\label{th:warmup}
Assume that
\begin{equation}\label{assump}
u_0\in L^1(\RR^N)\cap W^{1,\infty}(\RR^N), \quad u_0\geq0, \
u_0\not\equiv 0.
\end{equation}
Then there exists a unique non-negative (viscosity) solution $u$ to \eqref{eq1}-\eqref{a2} such that:
\begin{enumerate}[(i)]
\item if $p>p_c$ and $q>q_\star$, then
\begin{equation}
\|u(t)\|_\infty \le C\ \| u_0\|_1^{p\eta}\ t^{-N\eta}\,, \quad
t>0\,. \label{wu1}
\end{equation}
\item if $p>p_c$ and $q\in (N/(N+1),q_\star]$, then
\begin{equation}
\|u(t)\|_\infty \le C\ \| u_0\|_1^{q\xi}\ t^{-N\xi}\,, \quad t>0\,.
\label{wu2}
\end{equation}
\item  if $p> p_c$ and $q=N/(N+1)$ or $p=p_c$ and $q\ge p_c/2$, then
\begin{equation}
\|u(t)\|_\infty \le C'(u_0)\ e^{-C(u_0)t}\,, \quad t>0\,.
\label{wu3}
\end{equation}
\item  if $p\ge p_c$ and $q\in (0,N/(N+1))$ or $p\in (1,p_c)$, then there is $T_{\text{e}}>0$ depending only on $N$, $p$, $q$, and $u_0$ such that
\beqn u(t,x) \equiv 0\,, \quad (t,x)\in [T_{\text{e}},\infty)\times
\RR^N\,. \label{wu4} \eeqn
\end{enumerate}
\end{theorem}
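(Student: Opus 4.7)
My first step is to construct a viscosity solution by parabolic regularization: replacing $|\nabla u|^{p-2}\nabla u$ and $|\nabla u|^q$ by the smoothed expressions $(|\nabla u|^2+\ve)^{(p-2)/2}\nabla u$ and $(|\nabla u|^2+\ve)^{q/2}$ yields classical solutions $u_\ve$ of a uniformly parabolic quasilinear equation. Uniform bounds come from comparison with constants ($0\le u_\ve\le\|u_0\|_\infty$) and from a standard Bernstein computation on $|\nabla u_\ve|^2$ (giving $\|\nabla u_\ve(t)\|_\infty\le\|\nabla u_0\|_\infty$), with equicontinuity in time supplied by the equation itself. Ascoli--Arzel\`a then produces a locally uniform limit $u$, identified as a viscosity solution via the OS stability theorem \cite[Theorem~6.1]{OS}, and uniqueness follows from the OS comparison principle \cite[Theorem~3.9]{OS}.

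\textbf{Part (i): pure diffusion smoothing.} Since $|\nabla u|^q\ge 0$, the solution $u$ is a viscosity subsolution of the pure fast $p$-Laplacian equation \eqref{eq:diffusion}, so by comparison $u(t,\cdot)\le\Phi(t,\cdot)$ where $\Phi$ solves \eqref{eq:diffusion} with datum $u_0$. For $p>p_c$ the classical $L^1$--$L^\infty$ smoothing effect recalled in \cite{VazquezSmoothing} gives $\|\Phi(t)\|_\infty\le C\,\|u_0\|_1^{p\eta}\,t^{-N\eta}$, yielding \eqref{wu1}.

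\textbf{Parts (ii) and (iii): absorption-enhanced decay.} Here the diffusion-only estimate is either too weak (when $q\le q_\star$, $p>p_c$) or unavailable (when $p=p_c$). My plan is to couple the mass balance
\[
\frac{d}{dt}\|u(t)\|_1 \;=\; -\int_{\RR^N}|\nabla u|^q\,dx,
\]
justified through the regularization, with a Gagliardo--Nirenberg-type lower bound on $\int|\nabla u|^q$ in terms of $\|u\|_r$ and $\|u\|_1$ for a suitable exponent $r$, which is available precisely under the threshold $q>N/(N+1)$ built into the statement. This produces a closed differential inequality for $\|u(t)\|_1$ of power type when $q>N/(N+1)$ (giving algebraic decay, hence \eqref{wu2} after reapplying the smoothing on $[t/2,t]$) and linear at the borderline $q=N/(N+1)$ (giving the exponential decay \eqref{wu3}); the critical point $p=p_c$ with $q\ge p_c/2$ is treated the same way, with the polynomial smoothing replaced by its logarithmic analogue at $p=p_c$.

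\textbf{Part (iv) and the main obstacle.} For $p<p_c$, comparison with $\Phi$ as in part (i) reduces finite-time extinction to the classical extinction of the very singular fast $p$-Laplacian. The remaining regime $p\ge p_c$, $q\in(0,N/(N+1))$ is the most delicate: the diffusion preserves mass, so the absorption must itself drive extinction, but the Gagliardo--Nirenberg interpolation used in part (iii) breaks down in this range (its exponent degenerates at $q=N/(N+1)$ and the Sobolev conjugate $q^*=Nq/(N-q)$ drops below~$1$). My approach would be to derive a super-linear decay for some well-chosen $L^r$ norm of $u$, of the form $\tfrac{d}{dt}\|u\|_r^r \le -c\,\|u\|_r^{r\sigma}$ with $\sigma<1$, or equivalently to construct an explicit Hamilton--Jacobi-type supersolution compatible with the singular $p$-Laplacian, and then to transfer the extinction of this norm to $\|u\|_\infty$ via the smoothing of part (i). This last step---producing an estimate or a supersolution that is both compatible with the singular diffusion and sharp enough to force $u$ to vanish at a finite time---is where the main technical difficulty of the theorem lies.
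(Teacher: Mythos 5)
Your existence construction, your proof of part (i) by comparison with the pure diffusion solution $\Phi$ of \eqref{eq:diffusion} and the $L^1$--$L^\infty$ smoothing effect from \cite{HV81,VazquezSmoothing}, and your treatment of the $p\in(1,p_c)$ case of (iv) by comparison with the extinguishing fast $p$-Laplacian, all match the paper's proof.

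Parts (ii), (iii), and the regime $p\ge p_c$, $q\in(0,N/(N+1))$ of (iv) are where your proposal departs from the paper, and there is a genuine gap there. You propose to close a differential inequality for $\|u(t)\|_1$ coming from the mass balance by bounding $\int|\nabla u|^q\,dx$ from below via a Gagliardo--Nirenberg interpolation in $\|u\|_r$ and $\|u\|_1$, claiming it is available exactly when $q>N/(N+1)$ because then $q^\ast=Nq/(N-q)>1$. But the Sobolev-type inequality $\|u\|_{q^\ast}\le C\|\nabla u\|_q$ simply fails for $q<1$, regardless of $q^\ast$: for $N\ge 2$, a function with a transition layer of thickness $\delta$ between levels $0$ and $1$ has $\int|\nabla u|^q\sim\delta^{N-1-q}\to0$ while $\|u\|_1$, $\|u\|_\infty$ remain bounded away from zero; the one-dimensional version $\int_0^1|u'|^q\sim\delta^{1-q}\to0$ shows the same failure for $N=1$. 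The ranges in (ii)--(iv) include $q<1$ systematically (all of (iii) for $p>p_c$ and all of (iv) have $q\le N/(N+1)<1$, and (ii) has $q<1$ whenever $q_\star=p-N/(N+1)<1$), so no interpolation of the form you propose can hold there. The paper's mechanism is different: it uses the \emph{pointwise} Bernstein-type gradient estimate coming from the Hamilton--Jacobi part (estimates \eqref{grad.estHJ} and \eqref{grad.estHJ2} of Theorem~\ref{th:grad3}), which gives $\|\nabla u(t)\|_\infty\le C\,t^{-1/q}$, and then closes the argument using the Gagliardo--Nirenberg inequality \eqref{GN} with $\|\nabla u\|_\infty$ rather than an $L^q$ quasinorm of the gradient; this is the content of Proposition~\ref{prop.decay} and of the arguments of \cite{BLS01,BtL08,HJbook} to which the paper defers. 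To repair your proof you would need to replace the interpolation step by that pointwise gradient bound, which in this setting requires the regularized Bernstein machinery of Section~\ref{sec.grad} and Section~\ref{se:wp}.

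A second, smaller, divergence: at $p=p_c$ with $q\ge p_c/2$ you suggest rerunning the absorption-enhanced argument with a logarithmic substitute for the smoothing. The paper instead proves a self-contained new result (Proposition~\ref{prop:expdecsd}) that the \emph{pure} diffusion equation \eqref{eq:diffusion} at $p=p_c$ decays exponentially in $L^\infty$, using the logarithmic gradient estimate in Theorem~\ref{th:graddiff}(ii) together with \eqref{GN} and conservation of mass, and then obtains \eqref{wu3} at $p=p_c$ by the bare comparison $u\le\Phi$, with no contribution from the absorption term at all.
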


Let us first mention that the main contribution of
Theorem~\ref{th:warmup} is not the existence and uniqueness of a
viscosity solution to \eqref{eq1}-\eqref{a2}, as the latter readily
follows from the comparison principle \cite[Theorem~3.9]{OS} while
the former is likely to be proved by Perron's method such as in
\cite[Section~4]{OS}. We shall however provide a proof in the final
section as it is needed in order to justify the derivation of the
gradient estimates stated below. Next, we notice that the decay
estimates \eqref{wu1} and \eqref{wu2} are also enjoyed by
non-negative and integrable solutions to \eqref{eq:diffusion} and
\eqref{eq:hj}, respectively. Since $t^{-N\xi}\le t^{-N\eta}$ for
$t\ge 1$ and $q<q_\star$,  Theorem~\ref{th:warmup} already uncovers
a dichotomy in the behavior of solutions to \eqref{eq1}-\eqref{a2}
for $p\ge p_c$ with a faster decay induced by the absorption term
for $q<q_\star$. This decay is even faster for $q\in (0,N/(N+1)]$.
Still, as we shall see now, more precise information can be obtained
for initial data with a fast decay at infinity and the first main
result of this paper is the following improvement of
Theorem~\ref{th:warmup} for $p>p_c$.

\begin{theorem}\label{th:idrext}
Assume that $u_0$ satisfies \eqref{assump}. Then the corresponding solution $u$ to \eqref{eq1}-\eqref{a2} satisfies:
\begin{enumerate}[(i)]
\item if $p\in (p_c,2)$, $q\in (p/2,q_\star)$, and there is $C_0>0$ such that
\beqn u_0(x) \le C_0\ |x|^{-(p-q)/(q-p+1)}\,, \quad x\in\RR^N\,,
\label{x1} \eeqn then \beqn t^{(N+1)(q_\star-q)/(2q-p)}\ \|u(t)\|_1
+ t^{(p-q)/(2q-p)}\ \|u(t)\|_\infty \le C(u_0)\,, \quad t>0\,.
\label{x2} \eeqn
\item if $p\in (p_c,2)$, $q=p/2$, and $u_0$ satisfies \eqref{x1}, then
\beqn \|u(t)\|_1 + \|u(t)\|_\infty \le C'(u_0)\ e^{-C(u_0)t}\,,
\quad t>0\,. \label{x3} \eeqn
\item if $p\in (p_c,2)$, $q\in (0,p/2)$, and there are $C_0>0$ and $Q>0$ such that
\beqn u_0(x) \le C_0\ |x|^{-(p-Q)/(Q-p+1)}\,, \quad x\in\RR^N\,,
\label{x4} \eeqn with $Q=q$ if $q\in (q_1,p/2)$ and $Q\in (
q_1,p/2)$ if $q\in (0,q_1]$. Then there is $T_{\text{e}}>0$
depending only on $N$, $p$, $q$, and $u_0$ such that \beqn u(t,x)
\equiv 0\,, \quad (t,x)\in [T_{\text{e}},\infty)\times \RR^N\,.
\label{x5} \eeqn
\end{enumerate}
\end{theorem}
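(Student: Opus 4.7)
The plan for all three parts is to compare $u$ with an explicit supersolution whose functional form encodes the asserted rate, then invoke the comparison principle \cite[Theorem~3.9]{OS} in the modified viscosity framework of \cite{IS,OS}. The assumptions \eqref{x1} and \eqref{x4} prescribe precisely the algebraic tail of $u_0$ that is needed so that a self-similar or separated-variable profile with matching far-field decay can be translated in time and scaled up to dominate the initial data.

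For part (i), which lies in the regime where diffusion and absorption balance, the two exponents in \eqref{x2} dictate the self-similar ansatz
$$
\bar{U}(t,x) = (t+\tau)^{-\alpha}\,f\!\bigl(|x|\,(t+\tau)^{-\beta}\bigr),\qquad \alpha=\frac{p-q}{2q-p},\quad \beta=\frac{q-p+1}{2q-p},
$$
with $\alpha,\beta>0$ precisely on the range $q\in(p/2,q_\star)$. Substitution into \eqref{eq1} reduces the problem to a nonlinear radial ODE for $f$, which I would solve by a shooting argument to produce a positive, radially non-increasing profile with far-field decay $f(r)\sim c\,r^{-(p-q)/(q-p+1)}$ matching \eqref{x1}. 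Choosing $\tau$ large enough so that $u_0\le\bar U(0,\cdot)$ and applying comparison yields the $L^\infty$ bound in \eqref{x2}; the substitution $y=x\,(t+\tau)^{-\beta}$ converts $\|\bar U(t)\|_1$ to $\|f\|_1(t+\tau)^{-\alpha+N\beta}$, and the identity $-\alpha+N\beta = -(N+1)(q_\star-q)/(2q-p)$ delivers the claimed $L^1$ rate.

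For part (ii), at the critical case $q=p/2$ the self-similar exponents diverge, and I would use instead a separated-variable supersolution $V(t,x)=e^{-\lambda t}g(x)$. After multiplying the supersolution inequality by $e^{\lambda t}$ it takes the form
$$
e^{\lambda(1-p/2)t}|\nabla g|^{p/2}-e^{\lambda(2-p)t}\Delta_p g\ge\lambda g,
$$
which for $p<2$ is hardest at $t=0$ and reduces there to the stationary inequality $|\nabla g|^{p/2}-\Delta_p g\ge\lambda g$. I would build a positive radial solution $g$ with tail $g(r)\sim c\,r^{-p/(2-p)}$ (the natural exponent obtained from \eqref{x1} at $q=p/2$) for $\lambda$ small, and comparison with $A\,g(x)e^{-\lambda t}$ produces \eqref{x3}. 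For part (iii), where $q<p/2$, I would construct an extinguishing supersolution of power-of-time form whose spatial profile $h$ solves an elliptic inequality with tail $h(r)\sim c\,r^{-(p-Q)/(Q-p+1)}$ matching \eqref{x4}. The threshold $Q\in(q_1,p/2)$ is forced because $(p-q)/(q-p+1)$ ceases to be an admissible positive decay exponent once $q\le q_1\ge p-1$; when $Q>q$ one arranges by rescaling that $|\nabla W|\le 1$ in the relevant region, ensuring $|\nabla W|^Q\le|\nabla W|^q$ so that $W$ remains a supersolution of the original equation with exponent $q$.

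The central obstacle in all three cases is the construction of the radial profiles $f$, $g$, and $h$: each solves a nonlinear singular ODE (owing to $p<2$) and must possess exactly the prescribed algebraic tail together with global positivity and monotonicity, through a delicate phase-plane and shooting analysis in which the critical numbers $q_\star$, $p/2$, and $q_1$ enter naturally. A secondary technical point is to verify that these barriers --- in particular the extinguishing one near its support and the profiles at points where the gradient vanishes --- are genuine viscosity supersolutions in the modified sense of \cite{IS,OS}, for which the restricted class of comparison functions in their definition is essential.
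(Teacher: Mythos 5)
Your barrier strategy is structurally different from the paper's and, as written, has gaps in parts (ii) and (iii) that go beyond the deferred shooting analysis. The paper never constructs a self-similar or separated-variable profile. Its key device (Lemma~\ref{le:v1}) is the elementary observation that $A\,|x|^{-(p-q)/(q-p+1)}$ is a \emph{stationary} classical supersolution of \eqref{eq1} on $\RR^N\setminus\{0\}$ for $A$ large; comparison then gives a pointwise tail bound on $u(t)$, whence the interpolation $\|u(t)\|_1\le C(u_0)\,\|u(t)\|_\infty^{\theta}$. Splicing this with the $L^1$--$L^\infty$ smoothing estimate \eqref{spirou} gives the recursive inequality \eqref{v3}, from which the three conclusions follow by short iteration arguments: a bootstrap on $m(T)=\sup_{t\le T} t^{(p-q)\theta/(2q-p)}\|u(t)\|_1$ in (i), an exponential-weight bootstrap in (ii), and a differential inequality for $\tau(s)=\int_s^\infty \|u(t)\|_1^{\lambda}\,t^{-1}\,dt$ in (iii), with the residual range $q\le q_1$ disposed of by the scaling/comparison trick you indicate at the end. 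This route avoids entirely the need to solve any profile ODE.

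Concretely, two steps in your proposal would fail as stated. In part (ii), the reduction of $e^{\lambda(1-p/2)t}|\nabla g|^{p/2}-e^{\lambda(2-p)t}\Delta_p g\ge \lambda g$ to its $t=0$ instance requires $\Delta_p g\le 0$ pointwise, but for $p>p_c$ a radial profile with the only admissible tail $g(r)\sim c\,r^{-p/(2-p)}$ satisfies $\Delta_p g>0$ in the far field (one checks that $(a+1)(p-1)>N-1$ for $a=p/(2-p)$ exactly when $p>p_c$), and since $2-p>1-p/2$ the unfavorable term grows strictly faster in $t$ than the favorable one; the $t$-dependence therefore cannot be frozen at $t=0$. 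In part (iii), the extinguishing ansatz $(T-t)_+^{1/(1-q)}h(x)$ produces a diffusion term scaling like $(T-t)^{\,1-(2-p)/(1-q)}\Delta_p h$, and the exponent is negative precisely when $q>p-1$; since $q_1\ge p-1$, the whole range $Q\in(q_1,p/2)$ in \eqref{x4} has the diffusion term \emph{blowing up} rather than vanishing as $t\to T^-$, and in the tail where $\Delta_p h>0$ this is a loss that cannot simply be discarded. These are not artifacts of sloppiness but real obstructions; the Lemma~\ref{le:v1}-plus-bootstrap scheme is how the paper circumvents them.
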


Noting that $(p-q)/(2q-p)>N\xi$ for $q\in (0,q_\star)$, the decay
estimates obtained in Theorem~\ref{th:idrext} are clearly faster
than those of Theorem~\ref{th:warmup} for initial data decaying
sufficiently rapidly as $|x|\to\infty$.

Let us next notice that a very interesting point in the previous
theorem is the appearance of a new critical exponent for the
absorption, $q=p/2$, that in the slow-diffusion range $p>2$ did not
play any role. Moreover, this critical exponent is a branching point
for the behavior, as an interface between decay as $t\to\infty$ and
finite time extinction. It is worth mentioning that the
corresponding critical exponent for $p>2$ is $q=p-1$ and that we
have $p-1=p/2=1$ exactly when $p=2$.

Another interesting remark related to Theorem~\ref{th:idrext} is the
fact that, for $p\in[p_c,2)$ and $q\in[p/2,1)$, the diffusion
prevents extinction in finite time, see Proposition~\ref{pr:pos}
below. This is a feature which matches with the linear diffusion
case $p=2$, since, under suitable conditions on the initial data
$u_0$, finite time extinction could appear for any $q\in (0,1)$
\cite{BLS01,BLSS02, Gi05}.

\medskip

As mentioned above, the key technical tool for studying the large
time behavior of the solutions of \eqref{eq1} is the availability of
suitable gradient estimates, with abstract form
\eqref{grad_est_abstract}. Their proof relies on a Bernstein
technique borrowing ideas from \cite{Be} and, apart from their
technical interest in the proof of our main theorem, they are
interesting by themselves. Let us first denote the positivity set $\cp$ of
$u$ by
\begin{equation}\label{pos.set}
\cp:=\{(t,x)\in Q_{\infty}: \ u(t,x)>0\}.
\end{equation}

\begin{theorem}\label{th:grad1}
Let $p>p_c$ and $u_0$ satisfy \eqref{assump}. The corresponding
solution $u$ to \eqref{eq1}-\eqref{a2} satisfies the following
gradient estimates:

\begin{enumerate}[(i)]
\item for $q\in[1,\infty)$, we have
\begin{equation}\label{grad.est1}
\left|\nabla
u^{-(2-p)/p}(t,x)\right|\leq\left(\frac{2-p}{p}\right)^{(p-1)/p}\eta^{1/p}t^{-1/p},
\ (t,x)\in \cp.
\end{equation}

\item for $q\in[p/2,1)$, we have
\begin{equation}\label{grad.est2}
|\nabla u^{-(2-p)/p}(t,x)|\leq
C\left(\|u_0\|_{\infty}^{(2q-p)/p(p-q)}+t^{-1/p}\right),
\ (t,x)\in \cp.
\end{equation}

\item for $q\in(p-1,p/2)$, we have
\begin{equation}\label{grad.est3}
|\nabla u^{-(q-p+1)/(p-q)}(t,x)|\leq
C\left(1+\|u_0\|_{\infty}^{(p-2q)/p(p-q)}t^{-1/p}\right),
\ (t,x)\in \cp.
\end{equation}

\item for $q=p-1$, we have the logarithmic estimate
\begin{equation}\label{grad.est4}
|\nabla\log u(t,x)|\leq C\left(1+\|u_0\|_{\infty}^{
(2-p)/p}t^{-1/p}\right), \ (t,x)\in \cp.
\end{equation}

\item for $q\in(0,p-1)$, we have
\begin{equation}\label{grad.est5}
|\nabla u^{(p-q-1)/(p-q)}(t,x)|\leq
C\left(1+\|u_0\|_{\infty}^{(p-2q)/p(p-q)}t^{-1/p}\right), \ (t,x)\in
Q_\infty.
\end{equation}
\end{enumerate}
\end{theorem}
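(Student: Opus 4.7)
The plan is to apply a Bernstein-type argument after a nonlinear change of variables $v = \phi(u)$ adapted to the range of $q$. Concretely, one sets $v = u^\alpha$ with
\[
\alpha := \begin{cases} -(2-p)/p & \text{if } q \ge p/2, \\ -(q-p+1)/(p-q) & \text{if } p-1 < q < p/2, \\ (p-q-1)/(p-q) & \text{if } 0 < q < p-1, \end{cases}
\]
and $v = \log u$ in the borderline case $q=p-1$; the four branches agree at the thresholds $q=p/2$ and $q=p-1$, which explains the glueing of estimates \eqref{grad.est1}--\eqref{grad.est5}. The exponent $\alpha$ is chosen so that the transformed absorption term reads $c(\alpha)\, u^{(1-\alpha)q}\, |\nabla v|^q$, with the power of $u$ either identically favourable or controllable by $\|u_0\|_\infty$ via the maximum principle. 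Assuming first that $u$ is smooth and strictly positive on an open subdomain of $\cp$, I would derive the evolution inequality satisfied by $w := |\nabla v|^2$ by differentiating the transformed PDE, testing with $\nabla v$, and absorbing the Hessian cross-terms $|D^2 v|^2$ via Cauchy--Schwarz.

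To capture the $t^{-1/p}$ scaling of the singular $p$-Laplacian, I would then apply the maximum principle to the time-weighted quantity $W(t,x) := t^{2/p}\, w(t,x)$, localised by a smooth spatial cutoff $\chi_R$ and restricted to $\{u>\varepsilon\}$. At an interior maximum of $W\chi_R$, the conditions $\partial_t(W\chi_R)\ge 0$, $\nabla(W\chi_R)=0$, $\Delta(W\chi_R)\le 0$ collapse the differential inequality for $w$ into an algebraic one of the form
\[
c_1\, W^\mu \le c_2\, W + c_3\, t^{2/p}\, \|u_0\|_\infty^{2\sigma} + (\text{cutoff error}),
\]
with exponents $\mu>1$ and $\sigma \ge 0$ depending on the case. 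In case (i), $q\ge 1$ gives the transformed absorption term a favourable sign, so $\sigma=0$ and one recovers the sharp Aronson--B\'enilan-type bound \eqref{grad.est1} for the pure singular diffusion. In cases (ii)--(iii), the absorption contributes a term controlled by $\|u_0\|_\infty$ raised to the exponent read off from the transformed PDE, producing the stationary part of \eqref{grad.est2}--\eqref{grad.est3}; case (iv) is obtained by letting $\alpha\to 0^-$ in case (iii). Case (v) is proved analogously, but with the key new observation that the positive exponent $\alpha>0$ allows $v$ to extend continuously by zero across $\partial \cp$, so after the cutoffs $R\to\infty$ and $\varepsilon\to 0$ the bound persists on all of $Q_\infty$ rather than only on $\cp$.

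To legitimise these pointwise manipulations in the viscosity framework, I would regularise \eqref{eq1}--\eqref{a2} by replacing $u_0$ by a smooth strictly positive datum $u_{0,\delta}$ and $\Delta_p$ by the uniformly parabolic operator $\mathrm{div}\bigl((|\nabla u|^2+\delta)^{(p-2)/2}\nabla u\bigr)$. Classical parabolic theory produces smooth positive approximants to which the Bernstein calculation applies rigorously, and passage to the limit $\delta\to 0$ via the stability of viscosity solutions (\cite[Theorem~6.1]{OS}) transfers the gradient bounds to $u$. The main obstacle is the case-dependent algebra: for each of (ii)--(v) one must identify the specific exponent $\alpha$ for which the Bernstein differential inequality closes, and then balance, at the maximum of $W$, the competing contributions of diffusion and absorption — the critical values $q=p/2$ and $q=p-1$ emerging precisely as the thresholds where this balance tips. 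A secondary technical difficulty is the degeneracy of the linearised operator where $\nabla u$ vanishes, which is what forces \eqref{grad.est1}--\eqref{grad.est4} to be stated only on the positivity set $\cp$.
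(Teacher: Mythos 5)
Your proposal follows essentially the same strategy as the paper: the same Bernstein change of variables with exactly the exponents $\alpha$ (which match the paper's choice of $\varrho=1/\psi'$, namely $\varrho(z)\propto z^{2/p}$ for $q\ge p/2$ and $\varrho(z)\propto z^{1/(p-q)}$ for $q<p/2$, giving $v=u^{-(2-p)/p}$ and $v=u^{(p-q-1)/(p-q)}$ respectively), the same regularization $a_\e(\xi)=(\xi+\e^2)^{(p-2)/2}$ with a positive shift of the initial datum, and the same role for the thresholds $q=p/2$ and $q=p-1$. The only cosmetic difference is in the concluding step: after deriving the Bernstein inequality $\partial_t w - Aw - B\cdot\nabla w + R\le 0$, the paper exhibits an explicit spatially-constant supersolution $W(t)$ and applies the comparison principle, whereas you propose evaluating at an interior maximum of the localized time-weighted quantity $t^{2/p}w\chi_R$; these are standard equivalent variants of the same argument.
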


A striking feature in Theorem~\ref{th:grad1} is that in parts
(i)-(iii) gradients of negative powers of the solutions appear.
Besides being seemingly new, these estimates are rather unusual and
obviously stronger than an estimate for only $|\nabla u|$, which can
be easily deduced from them. They are valid only on the positivity
set of $u$ but, as we shall show below, $\cp$ coincides with
$Q_{\infty}$ when $p\geq p_c$ and $q\geq p/2$, and $\cp\subseteq
(0,T_e)\times\real^N$ for $1<p<p_c$ or $p_c\le p<2$ and $q<p/2$, for
some $T_e<\infty$.

\begin{remark}\label{rem:1} We actually prove a stronger result,
namely that, for any $\delta>0$,
$|\nabla(u+\delta)^{-(2-p)/p}(t,x)|$ (respectively
$|\nabla(u+\delta)^{-(2-p)/p}(t,x)|$,
$|\nabla(u+\delta)^{-(q-p+1)/(p-q)}(t,x)|$ and
$|\nabla\log(u+\delta)(t,x)|$) is bounded by the same right-hand
side as in \eqref{grad.est1} (respectively \eqref{grad.est2},
\eqref{grad.est3} and \eqref{grad.est4}) for all $(t,x)\in
Q_{\infty}$. For instance, for $q\in[1,\infty)$ we have
\begin{equation}\label{grad.est.strong}
\left|\nabla(u+\delta)^{-(2-p)/p}(t,x)\right|\leq
\left(\frac{2-p}{p}\right)^{(p-1)/p}\eta^{1/p}t^{-1/p},
\ (t,x)\in Q_{\infty}.
\end{equation}
As the right-hand side of \eqref{grad.est.strong} does not depend on
$\delta>0$, we deduce \eqref{grad.est1} by letting $\delta\to 0$
wherever it is possible, that is in $\mathcal{P}$.
\end{remark}

These gradient estimates will be used in the sequel to prove parts
of Theorem~\ref{th:idrext}. Their
proof is divided into two parts and performed in Sections~\ref{subsec.grad.est1} and~\ref{subsec.grad.est2}.

We obtain similar gradient estimates for $p=p_c$ and $p<p_c$. In the
case $p=p_c$ being a critical exponent, some logarithmic
corrections appear in the gradient estimates; they are gathered in
the following result, that is proved in Section~\ref{subsec.grad.est3}. Notice that, as $p_c=1$ in one space
dimension, the next theorem is only valid for $N\geq2$.

\begin{theorem}\label{th:grad2}
Let $p=p_c$ and $u_0$ satisfy \eqref{assump}. The corresponding
solution $u$ to \eqref{eq1}-\eqref{a2} satisfies the following
gradient estimates:

\begin{enumerate}[(i)]

\item for $q\geq1$ and $(t,x)\in \cp$, we have
\begin{equation}\label{grad.est6}
|\nabla u^{-1/N}(t,x)|\leq C \left( \log\left(\frac{e
\|u_0\|_\infty}{u(t,x)} \right) \right)^{1/p_c}t^{-1/p_c}.
\end{equation}

\item for $q\in(N/(N+1),1)$ and $(t,x)\in\cp$, we have
\begin{equation}\label{grad.est7}
|\nabla u^{-1/N}(t,x)|\leq C\left(\|u_0\|_{\infty}^{
1/N\xi(p_c-q)}+t^{-1/p_c}\right)\ \left( \log\left(\frac{e
\|u_0\|_\infty}{u(t,x)} \right) \right)^{1/p_c}.
\end{equation}

\item for $q=N/(N+1)=p_c/2$ and $(t,x)\in\cp$, we have
\begin{equation}\label{grad.est.ex}
|\nabla u^{-1/N}(t,x)|\leq C\ \left( \log\left(\frac{e
\|u_0\|_\infty}{u(t,x)} \right) \right)^{2/p_c} \left( 1+t^{-1/p_c}
\right).
\end{equation}

\item for $q\in(0,N/(N+1))$, the previous gradient estimates
\eqref{grad.est3}, \eqref{grad.est4} and \eqref{grad.est5} still
hold true.
\end{enumerate}
\end{theorem}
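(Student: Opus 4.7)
My plan is to adapt the Bernstein argument used for Theorem~\ref{th:grad1} to the critical exponent $p=p_c$, where $N(p_c-2)+p_c=0$, so that the coefficient $\eta$ appearing in \eqref{grad.est1} blows up; this precisely rules out any bound by a pure power of $u$ and forces the logarithmic correction $\log(e\|u_0\|_\infty/u)$ to appear on the right-hand sides. Throughout, I would work on a smoothed problem (in the spirit of Remark~\ref{rem:1}) with $u_\delta:=u+\delta$ and consider $v:=u_\delta^{-1/N}$, observing that $-1/N=-(2-p_c)/p_c$, so that $v$ satisfies the same transformed PDE as in the proof of \eqref{grad.est.strong}, but with a vanishing damping coefficient.

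For parts (i)--(iii), the natural Bernstein functional $|\nabla v|^2$ yields, after substitution in the PDE for $v$, a parabolic inequality in which the damping term formerly proportional to $N(p-2)+p$ drops out identically at $p=p_c$. I would therefore introduce the weighted functional
\[
W(t,x):=\frac{|\nabla v(t,x)|^{p_c}}{\Psi(v(t,x))},
\]
with $\Psi$ an increasing function of $v$ chosen so that the new damping term $\Psi'/\Psi$ plays the role of the lost coefficient; the correct choice is $\Psi(v)\sim\bigl(\log(\|u_0\|_\infty v^N)\bigr)^{p_c-1}$, which produces the factor $(\log(e\|u_0\|_\infty/u))^{1/p_c}$ after taking $p_c$-th roots, since $v^N=u_\delta^{-1}$. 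Applying the parabolic maximum principle to $W$, combined with a comparison against a spatially constant supersolution behaving like $C(1+t^{-1})$, yields a $\delta$-independent bound for $W$, from which \eqref{grad.est6} follows by letting $\delta\searrow 0$ on $\cp$ and using $|\nabla v|=|\nabla u^{-1/N}|$. In part (ii), tracking the absorption term $|\nabla u|^q$ through the same computation produces an extra cross term whose Young-inequality control adds the factor $\|u_0\|_\infty^{1/(N\xi(p_c-q))}$, exactly as the corresponding power of $\|u_0\|_\infty$ is generated in the proof of \eqref{grad.est2}. In part (iii), where $q=N/(N+1)=p_c/2$ sits at the self-similar balance, a second borderline cancellation arises and must be absorbed by carrying an extra logarithmic factor in $\Psi$, explaining the exponent $2/p_c$ in \eqref{grad.est.ex}.

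Part (iv) should be essentially free: for $q\in(0,N/(N+1))$ with $p=p_c$, the relevant Bernstein exponents $(q-p+1)/(p-q)$ and $(p-q-1)/(p-q)$ from Theorem~\ref{th:grad1}(iii)--(v) stay strictly bounded away from the degenerate value, so the computations yielding \eqref{grad.est3}, \eqref{grad.est4}, \eqref{grad.est5} go through verbatim and no logarithmic correction is needed. The main obstacle, I expect, will be the algebraic bookkeeping in part (iii): at the doubly critical point $p=p_c$, $q=p_c/2$, both the diffusion and the absorption coefficients sit simultaneously at their borderline values, so two independent cancellations have to be compensated by two nested logarithmic factors, and the signs and weights of the resulting cross terms must be tuned precisely for the maximum-principle argument on $W$ to close with the announced exponents.
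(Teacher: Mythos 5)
Your high-level diagnosis is correct: at $p=p_c$ the power-law choice $\varrho(z)\propto z^{2/p}$ makes $R_1$ vanish identically (indeed $2k+p-2\propto N(p-2)+p=0$), and a logarithmic correction \`a la Hamilton is the right cure, while part~(iv) uses the same $\varrho$ as for $p>p_c$, $q<p/2$ because $k+p_c-q-1=(p_c-2q)/2>0$ there. However, where you place the logarithm is genuinely different from the paper, and the substitution you propose does not plug in to the machinery already built. The paper keeps the Bernstein quantity $w=|\nabla v|^2$ from Lemma~\ref{techlemma} and instead inserts the logarithm into the change of variables by taking
\[
\varrho(u)=u^{(N+1)/N}\bigl(\log M-\log u\bigr)^{(N+1)/2N}, \qquad M=e\|u_0\|_\infty
\]
(and $(\log)^{(N+1)/N}$ for $q=p_c/2$), so that $R_1\ge (N+1)/2N$ and $R_2\ge 0$, and the entire differential inequality \eqref{sub_diff} for $w$ and the supersolution comparison are reused verbatim. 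Your weighted functional $W=|\nabla v|^{p_c}/\Psi(v)$ with $v=u^{-1/N}$ lies outside the scope of Lemma~\ref{techlemma}, which is derived specifically for $|\nabla v|^2$; you would need to compute from scratch the parabolic operator satisfied by $W$, and you do not indicate how to do this nor verify that the diffusion term produces a coercive remainder once the $\Psi'/\Psi$-terms are taken into account. This is a substantive gap, not mere bookkeeping: the whole point of the paper's device is that the log is absorbed into $\varrho$ so that the already-proved evolution inequality for $w=|\nabla v|^2$ applies. A small check also shows the exponent you propose for $\Psi$ is off: since $|\nabla v|^{p_c}=N^{-p_c}u^{-2}|\nabla u|^{p_c}$ when $v=u^{-1/N}$ and $p_c=2N/(N+1)$, matching the target estimate $|\nabla u^{-1/N}|\le C\bigl(\log(e\|u_0\|_\infty/u)\bigr)^{1/p_c}t^{-1/p_c}$ from a bound $W\le C/t$ forces $\Psi(v)\propto\log(e\|u_0\|_\infty/u)$ with exponent $1$, not $p_c-1$. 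Finally, for part~(ii) you gesture at a Young-inequality control of the absorption term but do not exhibit the key step, which in the paper is a pointwise maximization of $z\mapsto z^{(q(N+1)-N)/N}(\log M-\log z)^{q(N+1)/2N}$ giving the bound $R_2\le C\|u_0\|_\infty^{(q(N+1)-N)/N}$; without the explicit $\varrho$ that observation is not available.
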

\begin{remark}\label{rem:3} Similarly to the case $p>p_c$ (recall Remark~\ref{rem:1}), given $\delta>0$, the estimates \eqref{grad.est6}-\eqref{grad.est.ex} are true for all $(t,x)\in Q_\infty$ provided that $u(t,x)$ is replaced by $u(t,x)+\delta$ on both sides of the inequalities.
\end{remark}

In the range $p<p_c$, the situation becomes more technical and more
involved, and apparently there is a new critical exponent coming
from the diffusion that plays a role, $p_{sc}=2(N+1)/(N+3)$. We can
still establish gradient estimates for this range, but it requires
to handle separately several cases according to the value of $q$.
Since they are not used afterwards, we do not state nor prove them
but refer the interested reader to Section~\ref{subsec.grad.est4}
where we provide a proof only for a limited range of $q$, namely,
$q\geq 1-k$.

Finally, another useful gradient estimate is the one which retains
only the influence of the Hamilton-Jacobi term:

\begin{theorem}\label{th:grad3}
Let $p\in[p_c,2)$ and $u_0$ satisfy \eqref{assump}. The
corresponding solution $u$ to \eqref{eq1}-\eqref{a2} satisfies  the
following gradient estimates: if $q\in (0,1)$, we have
\begin{equation}\label{grad.estHJ}
|\nabla u(t,x)|\leq C\|u_0\|_{\infty}^{1/q}t^{-1/q}, \ (t,x)\in
Q_{\infty},
\end{equation}
while, if $q>1$, we have a slightly better formulation:
\begin{equation}\label{grad.estHJ2}
\left|\nabla u^{(q-1)/q}(t,x)\right|\leq\frac{1}{q}(q-1)^{(q-1)/q}t^{-1/q},
\ (t,x)\in Q_{\infty}.
\end{equation}
\end{theorem}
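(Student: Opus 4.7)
The proof is a Bernstein-type argument whose core idea is that, in the parabolic inequality obtained for an appropriate power of $|\nabla u|$, the singular $p$-Laplacian contribution enters with a favorable (non-positive at extrema) sign, so the gradient decay rate is dictated entirely by the Hamilton-Jacobi dissipation, matching the classical Oleinik bounds for the pure equation $\partial_t h + |\nabla h|^q = 0$. As in Theorems~\ref{th:grad1}-\ref{th:grad2}, I would first reduce to a regularized problem with smooth, positive, rapidly decaying solutions $u_\varepsilon$ obtained by replacing $\Delta_p u$ by the non-degenerate operator $\dv((\varepsilon + |\nabla u|^2)^{(p-2)/2}\nabla u)$, $|\nabla u|^q$ by $(\varepsilon + |\nabla u|^2)^{q/2}$, and $u_0$ by a smooth, bounded, rapidly decaying approximation $u_{0,\varepsilon}$. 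Classical quasilinear parabolic theory provides smooth solutions, and the limit $\varepsilon\to0$ is handled by the stability of viscosity solutions already invoked in the proof of Theorem~\ref{th:warmup}.

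For $q>1$, the right auxiliary variable is $v_\varepsilon := u_\varepsilon^{(q-1)/q}$: a direct computation shows that, upon multiplying the $u_\varepsilon$-equation by $((q-1)/q) u_\varepsilon^{-1/q}$, the absorption contribution becomes $(q/(q-1))^{q-1} v_\varepsilon |\nabla v_\varepsilon|^q$, i.e., a pure Hamilton-Jacobi nonlinearity in $v_\varepsilon$. Setting $w := |\nabla v_\varepsilon|^2$ and applying the standard Bernstein procedure (differentiate in $x_k$, multiply by $2\partial_k v_\varepsilon$, sum over $k$, and rearrange), I expect to obtain an inequality of the form
\begin{equation*}
\partial_t w \le \mathcal{L}_\varepsilon w - 2\left(\tfrac{q}{q-1}\right)^{q-1} w^{1+q/2}\,,
\end{equation*}
where $\mathcal{L}_\varepsilon$ is a linear second-order elliptic operator with non-negative principal part originating from the regularized fast diffusion. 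Comparison with the spatially constant supersolution $\phi(t)$ of the ODE $\dot\phi = -2(q/(q-1))^{q-1}\phi^{1+q/2}$ with $\phi(0^+)=+\infty$ yields $\phi(t) = q^{-2}(q-1)^{2(q-1)/q} t^{-2/q}$, and taking square roots produces precisely the right-hand side of \eqref{grad.estHJ2}, independent of $\|u_0\|_\infty$. The case $q\in(0,1)$ is treated in the same spirit with the Bernstein computation performed directly on $w := |\nabla u_\varepsilon|^2$; the Bernstein reduction of $(\varepsilon+|\nabla u_\varepsilon|^2)^{q/2}$ is no longer homogeneous in $w$ alone when $q<1$, but after using the $L^\infty$-bound $u_\varepsilon\le \|u_{0,\varepsilon}\|_\infty$ from the maximum principle to homogenize the resulting inequality, comparison with the corresponding ODE yields \eqref{grad.estHJ}, the factor $\|u_0\|_\infty^{1/q}$ arising precisely from this homogenization step.

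The main technical obstacle in both cases is the algebraic verification that, after differentiating and summing in the Bernstein computation, the terms coming from the regularized singular diffusion organize into the elliptic operator $\mathcal{L}_\varepsilon w$ plus a remainder that either vanishes at a spatial maximum of $w$ or has favorable sign, rather than competing with the Hamilton-Jacobi dissipation. Because $p-2<0$, several sign conventions in the Bernstein algebra are reversed relative to the degenerate analysis of \cite{BtL08} for $p>2$, so the computation must be carried out carefully in the fast-diffusion regime $p\in[p_c,2)$, along the lines of \cite{BL99,GGK03}. Once this sign analysis is in place, the ODE comparison is routine and the limit $\varepsilon\to 0$ delivers the stated inequalities.
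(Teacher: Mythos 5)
Your treatment of the case $q>1$ is essentially the paper's: the substitution $v=u^{(q-1)/q}$ (corresponding in the paper's notation to $\varrho(u)=u^{1/q}$, up to a harmless constant in the normalization of $\varrho$), the resulting Bernstein inequality with a $w^{1+q/2}$ sink, and the ODE comparison all agree with the proof in Section~\ref{subsec.grad.HJ}; I checked that your supersolution $\phi(t)=q^{-2}(q-1)^{2(q-1)/q}t^{-2/q}$ solves your ODE exactly and gives \eqref{grad.estHJ2} after taking square roots, after one also verifies $R_1\ge 0$ (which holds since $k+q-1>0$ for $q>1$ and $p\ge p_c>p_{sc}$) so that the diffusion contribution may be discarded.

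The case $q\in(0,1)$, however, has a real gap. You propose to run the Bernstein computation directly on $w:=|\nabla u_\e|^2$ and ``homogenize'' afterwards with the $L^\infty$ bound. But with the identity change of variable ($\varrho\equiv 1$), the Hamilton–Jacobi term contributes only the drift $q|\nabla u|^{q-2}\nabla u\cdot\nabla w$ after differentiating in $x_k$, multiplying by $2\partial_k u$, and summing: there is no zeroth-order sink at all. In the paper's notation, $R_1=R_2=0$ for this choice, and the inequality collapses to $\partial_t w-Aw-B\cdot\nabla w\le 0$, which by comparison yields only the Lipschitz-preservation bound \eqref{prunelle}, not any $t^{-1/q}$ decay. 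No use of the $L^\infty$ bound on $u$ downstream can manufacture a sink that isn't there. The missing ingredient is precisely the change of variable that the paper borrows from \cite{GGK03}: set $\varphi(r)=\|u_0\|_\infty-r^2$, i.e.\ $v=(\|u_0\|_\infty-u)^{1/2}$, so that $R_2=-2^{q-1}v^{q-2}$ and hence $(q-1)R_2=2^{q-1}(1-q)v^{q-2}>0$, producing the dissipative term $2^q(1-q)(\|u_0\|_\infty-u)^{(q-2)/2}w^{(q+2)/2}$. Only then does bounding $\|u_0\|_\infty-u\le\|u_0\|_\infty$ homogenize the coefficient and give a legitimate ODE supersolution $W(t)=K\|u_0\|_\infty^{(2-q)/q}t^{-2/q}$, from which \eqref{grad.estHJ} follows after converting $|\nabla v|$ back to $|\nabla u|$ using $(\|u_0\|_\infty-u)^{-1/2}\ge\|u_0\|_\infty^{-1/2}$. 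You should also check $R_1=2^{p-2}(1+k)v^{p-4}\ge 0$ (true for $p>p_{sc}$), so that the diffusion contribution may be dropped as you intend.
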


These estimates are proved by similar modified Bernstein
techniques, but their main difference with respect to the previous
ones is that it is the term coming from the
diffusion which is simply discarded. They actually hold in more general ranges
of $p$ as we can deduce by analyzing their proof in Section~\ref{subsec.grad.HJ}.

\medskip

Having discussed the occurrence of finite time extinction in
Theorems~\ref{th:warmup} and~\ref{th:idrext} and obtained gradient
estimates valid on the positivity set \eqref{pos.set} of $u$, we
finally turn to the positivity issue: we first observe that the
$L^1$-norm of solutions $u$ to \eqref{eq1}-\eqref{a2} is
non-increasing. It thus has a limit as $t\to\infty$ which is
non-negative and it is natural to wonder whether the absorption term
may drive it to zero as $t\to\infty$ or not. This question is
obviously only meaningful for $p\ge p_c$ for which there is no
extinction for the diffusion equation \eqref{eq:diffusion} but
conservation of mass \cite{HV81}. In this direction, we also prove
the following positivity result that completes the panorama given in
Theorem~\ref{th:idrext}.

\begin{proposition}\label{pr:pos}
Let $p\in [p_c,2)$, $u_0$ satisfy \eqref{assump}, and $u$ be the
solution to \eqref{eq1}-\eqref{a2}.
\begin{enumerate}[(1)]
\item If either $p>p_c$ and $q\geq p/2$ or $p=p_c$ and $q>p_c/2$, then $\|u(t)\|_{1}>0$ for all $t\geq0$
and the positivity set satisfies $\cp=Q_{\infty}$.

\item We have $\lim\limits_{t\to\infty}\|u(t)\|_1>0$ if and
only if $q>q_{\star}$.
\end{enumerate}
\end{proposition}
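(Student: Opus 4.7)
The plan is to decompose Proposition~\ref{pr:pos} into two independent steps: spatial propagation of positivity at each fixed time, and a mass ODE for $m(t):=\|u(t)\|_1$. For the spatial step underlying part~(1), I would exploit the strong forms of the gradient estimates recorded in Remarks~\ref{rem:1} and~\ref{rem:3}: for any $\delta>0$, the function $w_\delta:=(u+\delta)^{-(2-p)/p}$ (or $(u+\delta)^{-1/N}$ when $p=p_c$) satisfies $|\nabla w_\delta(t,\cdot)|\leq K(t)$ on all of $\RR^N$ uniformly in $\delta$, with $K$ locally bounded on $(0,\infty)$. If $u(t,x_1)=0$ while $u(t,x_2)>0$ for some $x_1,x_2$, integrating $\nabla w_\delta$ along the segment from $x_2$ to $x_1$ yields
\[
\delta^{-(2-p)/p}\leq (u(t,x_2)+\delta)^{-(2-p)/p}+K(t)|x_1-x_2|,
\]
and letting $\delta\to 0$ produces a contradiction. (For $p=p_c$ the logarithmic correction is absorbed by a separable ODE comparison along the segment, using the divergence of $\int^{\infty} ds/(\log s)^{1/p_c}$.) Hence at every $t>0$, $u(t,\cdot)$ is either identically zero or strictly positive on $\RR^N$.

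For the time step of part~(1), and for part~(2) direction $q>q_\star$, I would work with $m(t)$ and exploit $m'(t)=-\int|\nabla u|^q\,dx$, justified by a cutoff argument. Theorem~\ref{th:grad1}(i)-(ii) gives $|\nabla u|\leq C\,u^{2/p}\,G(t)$ on $\mathcal{P}$, and since $q\geq p/2$ ensures $2q/p\geq 1$, H\"older produces
\[
m'(t)\geq -C\,G(t)^q\,\|u(t)\|_\infty^{(2q-p)/p}\,m(t).
\]
Gronwall combined with the continuity $m(t)\to\|u_0\|_1>0$ as $t\to 0^+$ (itself obtained from the monotonicity $m'\leq 0$ and Fatou applied to the pointwise convergence $u(t,x)\to u_0(x)$) yields $m(t)>0$ for every $t\geq 0$, which, together with spatial propagation, proves part~(1). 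For part~(2) in the range $q\geq 1$, taking $G(t)=Ct^{-1/p}$ and plugging in the sharp bound $\|u(t)\|_\infty\leq Ct^{-N\eta}$ from Theorem~\ref{th:warmup}(i) gives the integrability exponent $(q(N+1)-N)/(p(N+1)-2N)$, which exceeds $1$ precisely when $q>q_\star$; Gronwall then produces a uniform lower bound $m(t)\geq m(t_0)e^{-C}>0$. The range $q\in(\max\{q_\star,p/2\},1)$ requires further work since $G(t)$ no longer decays with $t$, and I would interpolate the estimate of Theorem~\ref{th:grad1}(ii) with the Hamilton-Jacobi bound of Theorem~\ref{th:grad3} to recover the correct integrability.

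The main obstacle is the converse direction $q\leq q_\star$ in part~(2). For $q\in(0,N/(N+1)]$ it follows from parts (iii)-(iv) of Theorem~\ref{th:warmup}. For $q\in(N/(N+1),q_\star]$, scaling considerations single out $q=q_\star$ as the unique exponent at which the self-similar rescaling of~\eqref{eq1} is mass-preserving (equation invariance together with $Nb=a$ forces $q=q_\star$), strongly suggesting mass loss in the limit for $q<q_\star$. My plan is to argue by contradiction: assuming $m_\infty:=\lim m(t)>0$, I would combine $\|u(t)\|_\infty\leq Ct^{-N\xi}$ of Theorem~\ref{th:warmup}(ii) with the Hamilton-Jacobi gradient bound of Theorem~\ref{th:grad3} to force $u(t,\cdot)$ to spread on a region of size $\sim t^\xi$, producing a lower bound $\int|\nabla u|^q\,dx\gtrsim t^{-1}$ for large $t$ which contradicts the finiteness of $\int_0^\infty\int|\nabla u|^q\,dx\,dt=\|u_0\|_1-m_\infty$. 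Turning this concentration/spreading heuristic into a rigorous lower bound on the dissipation (essentially matching $u$ to the conservative Hamilton-Jacobi self-similar profile) is the delicate step.
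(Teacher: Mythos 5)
Your decomposition (spatial propagation of positivity along segments versus a mass ODE for $m(t):=\|u(t)\|_1$) is sound and the spatial step is actually cleaner than what the paper does in Corollary~\ref{cor:positivity}: integrating the $\delta$-uniform gradient bound on $(u+\delta)^{-(2-p)/p}$ along the segment joining a zero of $u(t,\cdot)$ to a point of positivity and letting $\delta\to0$ indeed produces an immediate contradiction, and your separable-ODE fix for $p=p_c$ handles the logarithmic correction correctly. The mass-ODE plus Gronwall route for part~(1) is also a legitimate alternative to the paper's energy argument with $E_\theta(t)=\int u^{1+\theta}$ used in Proposition~\ref{nonextinction}, and in fact for $q=p/2$ the paper itself runs exactly your Gronwall argument (Proposition~\ref{pr:v3}). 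Two caveats, however: the mass ODE needs the full identity $m'(t)=-\int|\nabla u|^q$ (not just the one-sided estimate of \eqref{decaymass}, which is in the wrong direction), and this identity is only established in the paper for $p>p_c$ (Proposition~\ref{le:u1}) or for $p=p_c$ under the extra tail condition $u_0\le C_0|x|^{-N}$ (Proposition~\ref{le:u1b}); for general $u_0$ at $p=p_c$ one must, as the paper does, first compare with a compactly supported $\tilde u_0\le u_0$. Your proposal does not record this truncation step.

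The more serious gaps are in part~(2). For $q\in(\max\{q_\star,p/2\},1)$, your interpolation of \eqref{grad.est2} against the Hamilton--Jacobi bound of Theorem~\ref{th:grad3} will not hit the sharp threshold $q>q_\star$: the interpolation weight $\alpha=p/(2q)$ needed to close the estimate onto $m(t)$ leaves a time factor $t^{-(2q-p)/(2q)}$ which is never integrable at infinity, and one can check other choices of $\alpha$ don't recover $q_\star$ either. The paper's mechanism is different and much simpler: the equation is autonomous, so \eqref{grad.est2} applied from time $\tau/2$ gives $\left|\nabla u^{-(2-p)/p}(\tau)\right|\le C\bigl(\|u(\tau/2)\|_\infty^{(2q-p)/p(p-q)}+\tau^{-1/p}\bigr)$, and the decay $\|u(\tau/2)\|_\infty\le C\tau^{-N\eta}$ makes the first term $\lesssim\tau^{-1/p}$ precisely when $q\ge q_\star$; after that the $q\ge1$ computation goes through verbatim. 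Finally, for the converse direction $q\le q_\star\Rightarrow\lim m(t)=0$, your concentration/spreading heuristic is not what the paper does and, as you acknowledge, converting it into a rigorous lower bound on the dissipation is itself the whole difficulty. The paper's Proposition~\ref{zerolimit} instead localizes mass with a cutoff $\vartheta_R$: one bounds $\|u(t_2)\|_1$ by $CR^N\|u(t_2)\|_\infty$ plus the tail $\int_{|x|\ge R}u(t_1)$ plus a flux term estimated by H\"older against $\omega(t_1):=\int_{t_1}^\infty\!\int|\nabla u|^q\to0$, then optimizes in $R$; this is the concrete mechanism your plan is missing, and without it (or an equivalent) the "only if" direction remains open.
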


Thanks to Theorem~\ref{th:idrext} and Proposition~\ref{pr:pos}, we
thus have a clear separation between positivity and finite time
extinction, the latter occurring when either $p\ge p_c$ and $q\in
(0,p/2)$ or $p\in (1,p_c)$ while the former is true in $Q_\infty$
for $p\ge p_c$ and $q\ge p/2$. Let us emphasize that, for $p\in
[p_c,2)$ and $q\in [p/2,1)$, the diffusion term prevents the finite
time extinction that would occur in the absence of diffusion.
Table~\ref{tab:1} provides a summary of the outcome of this paper.

\begin{table}[h]\label{tab:1}
\begin{center}
\begin{tabular}{|c|c|c|c|c|}
\hline
 & $0<q<p/2$ & $q=p/2$ & $p/2<q<q_\star$ & $q_\star\le q$ \\
 \hline
 $p\in [p_c,2)$ & $\text{extinction}$ & $\begin{array}{c}\text{positivity} \\ \hline \text{exponential} \\ \text{decay} \end{array}$ & $\begin{array}{c}\text{positivity} \\ \hline \text{fast algebraic} \\ \text{decay} \end{array}$ & $\begin{array}{c}\text{positivity} \\ \hline \text{diffusion} \\ \text{decay} \end{array}$\\
 \hline
 $p\in (1,p_c)$ & $\begin{array}{c} \\ \text{extinction}\\ \hfill \end{array}$ & $\begin{array}{c} \\ \text{extinction}\\ \hfill \end{array}$ & $\begin{array}{c} \\ \text{extinction}\\ \hfill \end{array}$ & $\begin{array}{c} \\ \text{extinction}\\ \hfill \end{array}$ \\
 \hline
\end{tabular}
\end{center}
\caption{\label{table1} Behavior of $u$ for initial data decaying sufficiently fast at infinity }
\end{table}

\noindent \textbf{Organization of the paper}. A formal proof of the
gradient estimates for solutions to \eqref{eq1} is given in
Section~\ref{sec.grad}, which is divided into several subsections
according to the range of the exponents $p$ and $q$. Then, a
rigorous approach by approximation and regularization, completing
the formal one and settling also the existence and uniqueness of
solutions to \eqref{eq1}-\eqref{a2} is appended, due to its highly
technical character, see Section~\ref{se:wp}. We prove
Theorem~\ref{th:warmup} in Section~\ref{sec.decay}. Before proving
our main Theorem~\ref{th:idrext}, we devote Section~\ref{sec.large}
to the behavior of the $L^1$-norm of $u$ as $t\to\infty$ and to the
positivity issue as well. Finally, we prove our main
Theorem~\ref{th:idrext}, together with Proposition~\ref{pr:pos}, in
Section~\ref{se:idrae}.

\section{Gradient estimates}\label{sec.grad}

As already mentioned, the proof of the gradient estimates
relies on a Bernstein technique \cite{Be}, also used in
\cite{BtL08,BL99,GGK03} for $p\ge 2$, but in the case $p\in (1,2)$ the
technical details are quite different. We first have the following
technical general lemma.

\begin{lemma}\label{techlemma}
Let $p\in(1,2)$, $q>0$, and consider a $C^3$-smooth monotone function
$\varphi$. Set $v:=\varphi^{-1}(u)$ and $w:=|\nabla v|^2$, where $u$
is a solution of \eqref{eq1}. Then, the function $w$ satisfies the
following differential inequality:
\begin{equation}\label{sub_diff}
\partial_{t}w-Aw-B\cdot\nabla w+R\leq 0 \quad \mbox{ in }\;\; Q_\infty,
\end{equation}
where $B$ is given in \cite[Appendix~A, Eq.~(A.2)]{BtL08}, and
\begin{equation}\label{A}
Aw:=|\nabla u|^{p-2}\Delta w+(p-2)|\nabla u|^{p-4}(\nabla
u)^{t}D^{2}w\nabla u,
\end{equation}
\begin{equation}\label{R}
R:=2(p-1)R_{1}w^{(2+p)/2}+2(q-1)R_{2}w^{(2+q)/2},
\end{equation}
where $R_1$ and $R_2$ are given by
\begin{equation}\label{R1}
R_1:=|\varphi'|^{p-2}\left(k\left(\frac{\varphi''}{\varphi'}\right)^2-\left(\frac{\varphi''}{\varphi'}\right)'\right)
\end{equation}
(recall that $k$ is defined in \eqref{notations.cons}) and
\begin{equation}\label{R2}
R_2:=|\varphi'|^{q-2}\varphi''.
\end{equation}
\end{lemma}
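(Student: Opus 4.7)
The proof is a direct computation following the Bernstein technique. First, I would substitute $u = \varphi(v)$ into \eqref{eq1}. Using $\nabla u = \varphi'(v)\nabla v$, $u_{ij} = \varphi''(v) v_i v_j + \varphi'(v) v_{ij}$, and $|\nabla u|^{p-2} = |\varphi'|^{p-2} w^{(p-2)/2}$, the equation $\partial_t u - \Delta_p u + |\nabla u|^q = 0$ becomes a quasilinear equation for $v$ whose coefficients depend smoothly on $v$, $\nabla v$, and $D^2 v$, and in which $|\nabla u|^q = |\varphi'|^q w^{q/2}$.

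Second, I would differentiate this equation for $v$ in the direction $x_k$, multiply by $2 v_k$, and sum over $k$. Using the identities $\partial_t w = 2\sum_k v_k \partial_t v_k$ and $\partial_\ell w = 2\sum_k v_k v_{k\ell}$, one organises the outcome into three groups: the second-order terms in $w$ produce the operator $Aw$ of \eqref{A}, the first-order terms involving $\nabla w$ produce the drift $B\cdot\nabla w$, and the zeroth-order contributions (arising when the chain rule acts on the coefficients $(\varphi')^{p-1}$ and $(\varphi')^q$ which depend on $v$ through composition) yield the reaction $R$ of \eqref{R}. For the absorption term the computation is short: the chain rule produces $|\varphi'|^{q-2}\varphi''$ times $w^{(q+2)/2}$, giving $R_2$ as in \eqref{R2}.

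The diffusion term is the delicate part. Differentiating $\Delta_p u$ in $x_k$, multiplying by $v_k$, and summing produces cubic Hessian combinations of the form $v_k v_{ij} v_{ijk}$ that, after repeated use of $\partial_\ell w = 2 v_k v_{k\ell}$, split into contributions to $Aw$ and $B\cdot\nabla w$, plus a residual quadratic form in $D^2 v$ weighted by $(\varphi''/\varphi')^2$ and $(\varphi''/\varphi')'$. The main obstacle will be to dominate this residual from below by the sharp algebraic inequality between $\sum_{i,j} v_{ij}^2$, $(\Delta v)^2$, and $|D^2 v \nabla v|^2/|\nabla v|^2$, optimised over a free parameter: this is precisely what produces the constant $k$ of \eqref{notations.cons} in front of $(\varphi''/\varphi')^2$ and what allows one to drop the resulting non-negative Hessian term to obtain the inequality \eqref{sub_diff}. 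Since the explicit formula for $B$ is borrowed verbatim from \cite[Appendix~A]{BtL08}, where the analogous calculation was carried out for $p>2$, I would follow that structure while tracking carefully the sign of $p-2$, now negative: this sign change reverses several of the intermediate inequalities used there and accounts for the minus sign in the definition \eqref{R1} of $R_1$, as well as for the validity of \eqref{sub_diff} precisely in the singular range $p\in(1,2)$.
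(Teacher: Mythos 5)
Your overall framework is correct: this is indeed the Bernstein technique, and organizing the chain-rule output into $Aw$, $B\cdot\nabla w$ and a reaction term $R$ is the right idea. However, you are re-deriving from scratch what the paper obtains by directly invoking \cite[Lemma~2.1]{BtL08}, which is stated for general $C^2$ coefficients $a,b$ in $\partial_t u - \mathrm{div}\,(a(|\nabla u|^2)\nabla u) + b(|\nabla u|^2)=0$. The paper's proof simply plugs $a(r)=r^{(p-2)/2}$ and $b(r)=r^{q/2}$ into the formulas for $\tilde R_1$, $\tilde R_2$ from that lemma, uses the identity $\varphi'\varphi'''=(\varphi''/\varphi')'(\varphi')^2+(\varphi'')^2$, and reads off \eqref{R1}--\eqref{R2}; there are no ``intermediate inequalities'' that reverse sign when passing from $p>2$ to $p<2$, and the minus sign in front of $(\varphi''/\varphi')'$ in \eqref{R1} is not a consequence of the singular range — the very same formula (with a sign change only in $k$) arises in \cite{BtL08} for $p>2$. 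Your narrative on this point conflates the sign of $k$ with the structure of $R_1$ and is misleading.

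There are two specific points you omit that the paper must attend to. First, \cite[Lemma~2.1]{BtL08} is stated for \emph{increasing} $\varphi$; the paper extends it to \emph{monotone} $\varphi$ (needed later, since the choice $\varphi(r)=\|u_0\|_\infty - r^2$ in Section~\ref{subsec.grad.HJ} is decreasing). Second — and this is the more serious gap — \cite[Lemma~2.1]{BtL08} requires $a$ and $b$ to be $C^2$, but with $p\in(1,2)$ the function $a(r)=r^{(p-2)/2}$ is not even bounded near $r=0$, and $b(r)=r^{q/2}$ fails to be $C^2$ at $0$ unless $q\geq 4$. The paper explicitly flags its computation as formal and defers the rigorous proof to the regularized Cauchy problem \eqref{approx.eq} with $a_\e(r)=(r+\e^2)^{(p-2)/2}$, $b_\e(r)=(\e^2+r)^{q/2}-\e^q$ and a subsequent limit $\e\to 0$ (Section~\ref{se:wp}). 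Your proposal treats the calculation as if it could be carried out directly on $u$ itself and so would not survive the scrutiny of a referee; you should at least acknowledge the lack of regularity and indicate how it is to be circumvented.
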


We do not recall the precise form of $B$, since it is complicated
and not needed in the sequel.
\begin{proof}
We begin with Lemma~2.1 in \cite{BtL08}, which, by examining
carefully the proof, holds true for monotone functions $\varphi$
(not only for increasing functions, as stated in \cite{BtL08}). We
obtain the differential inequality
\begin{equation*}
\partial_{t}w-Aw-B\cdot\nabla
w+2\tilde{R}_{1}w^{2}+2\tilde{R}_{2}w\leq0,
\end{equation*}
where $A$ and $B$ have the form given in \eqref{A} and in \cite[Eq.
(A.2)]{BtL08}, respectively, and
\begin{equation*}
\tilde{R}_1:=-a\left(\frac{\varphi''}{\varphi'}\right)'-\left((N-1)\frac{(a')^2}{a}+4a''\right)(\varphi' \varphi'')^{2} w^{2}
-2a' w(2(\varphi'')^{2}+\varphi' \varphi'''),
\end{equation*}
\begin{equation*}
\tilde{R}_2:=\frac{\varphi''}{(\varphi')^{2}} \left( 2b' (\varphi')^{2}w-b \right),
\end{equation*}
the dependence of $a$, $a'$, $a''$, $b$, $b'$ on $\varphi'(v)^2 w$ and of
$\varphi$ and its derivatives on $v$ being omitted. In our case
$a(r)=r^{(p-2)/2}$, $b(r)=r^{q/2}$. Using these formulas for $a$ and
$b$ and the identity
\begin{equation*}
\varphi' \varphi'''=\left(\frac{\varphi''}{\varphi'}\right)' (\varphi')^{2}+(\varphi'')^{2},
\end{equation*}
we compute $\tilde{R_1}$ and $\tilde{R_2}$ and obtain
\begin{equation*}
\begin{split}
-\tilde{R}_1&=(p-1)|\varphi'|^{p-2}\left(\frac{\varphi''}{\varphi'}\right)' w^{(p-2)/2} \\
& +(p-2)\left[p-1+\frac{(N-1)(p-2)}{4}\right] (\varphi'')^2 \left|\varphi'\right|^{p-4} w^{(p-2)/2}\\
&=(p-1)|\varphi'|^{p-2}w^{(p-2)/2}\left[\left(\frac{\varphi''}{\varphi'}\right)'-k\left(\frac{\varphi''}{\varphi'}\right)^{2}\right]
=-(p-1)w^{(p-2)/2}R_1.
\end{split}
\end{equation*}
and
\begin{equation*}
\tilde{R}_2=\frac{\varphi''}{{(\varphi')}^{2}}\left(q\left|\varphi'\right|^{q}w^{(q-2)/2}w-\left|\varphi'\right|^{q}w^{q/2}\right)
=(q-1)R_{2}w^{q/2}.
\end{equation*}
arriving to the formula \eqref{R2}. Let us notice that this is still
a formal proof, since \cite[Lemma~2.1]{BtL08} requires $a$ and $b$
to be $C^2$-smooth, and our choices are not. For a rigorous proof,
we have to approximate $a$ and $b$ by their regularizations
\begin{equation*}
a_{\e}(r):=(r+\e^2)^{(p-2)/2}, \quad
b_{\e}(r):=(\e^2+r)^{q/2}-\e^q, \ \e>0,
\end{equation*}
and pass to the limit as $\e\to 0$, see Section~\ref{se:wp}.
\end{proof}
We also introduce the function $\varrho:=1/\psi'$, where
$\psi:=\varphi^{-1}$. We have
$$\varphi'(v)=\varrho(u), \quad
\varphi''(v)=(\varrho\varrho')(u),$$ hence, by straightforward
calculations, we obtain the following alternative formulas for $R_1$
and $R_2$:
\begin{equation}\label{altR1}
R_1=|\varrho(u)|^{p-2}\left(k(\varrho'(u))^2-(\varrho\varrho'')(u)\right)
\end{equation}
and
\begin{equation}\label{altR2}
R_2=|\varrho(u)|^{q-2}\varrho(u)\varrho'(u).
\end{equation}
We now choose in an appropriate way $\varrho$ in equations
\eqref{altR1} and \eqref{altR2}, in order to have either $R_1=1$,
$R_2=1$ or $R_1=R_2$. In this way we obtain gradient estimates in
the form of estimates for the function $w$ in the notations of Lemma~\ref{techlemma}.

Let us notice at that point that, if we take $\varrho(z)\equiv1$, we
have $R_1=R_2=0$ and $\varphi=\psi=\hbox{Id}$; thus, $w=|\nabla
u|^2$ satisfies the differential inequality
\begin{equation*}
Lw:=\partial_{t}w-Aw-B\cdot\nabla w\leq0 \ \hbox{in} \ Q_{\infty}.
\end{equation*}
Since $w(0)\leq\|\nabla u_0\|_{\infty}^2$ and the constant function
$\|\nabla u_0\|_{\infty}^2$ is a solution for the operator $L$, by
comparison we obtain \beqn \|\nabla u(t)\|_\infty \le \|\nabla
u_0\|_\infty\,, \qquad t\ge 0\,.\label{prunelle} \eeqn

\subsection{Gradient estimates for $p>p_c$ and
$q\ge p/2$}\label{subsec.grad.est1}

For this range of parameters, we choose
\begin{equation}
\varrho(z)=\left(\frac{p^2}{2(2k+p-2)}\right)^{1/p}z^{2/p},
\end{equation}
after noticing that
\begin{equation}
2k+p-2=\frac{(2-p)(N+1)(p-p_c)}{2(p-1)}>0.
\end{equation}
Then it is immediate to check that $R_1=1$ (in fact this is the way
we discover this choice of $\varrho$) and
\begin{equation*}
R_2=\frac{2}{p}\left(\frac{p^2}{2(2k+p-2)}\right)^{q/p}u^{(2q-p)/p}\geq 0,
\end{equation*}
hence
\begin{equation*}
R=2(p-1)w^{(p+2)/2}+\frac{4(q-1)}{p}\left(\frac{p^2}{2(2k+p-2)}\right)^{q/p} u^{(2q-p)/p} w^{(q+2)/2}.
\end{equation*}

\medskip

\noindent \textbf{Case 1.} For $q\geq 1$, $(q-1)R_2\geq0$, so that
$R\geq2(p-1)w^{(p+2)/2}$ and therefore
\begin{equation}\label{diff.ineq}
Lw:=\partial_{t}w-Aw-B\cdot\nabla w+2(p-1)w^{(p+2)/2}\leq 0.
\end{equation}
Once established the differential inequality \eqref{diff.ineq}, the
next step (that will be also used in the other cases) is to find a
supersolution to the differential inequality \eqref{diff.ineq}
depending only on time, in this way avoiding the terms with the
complicated forms of $A$ and $B$. In our case, we notice that
$W(t):=(p(p-1)t)^{-2/p}$ is a supersolution and conclude that
\begin{equation*}
|\nabla v(t,x)|\leq (p(p-1)t)^{-1/p}, \ (t,x)\in Q_{\infty}.
\end{equation*}
But $v=\psi(u)$, hence $\nabla v=\psi'(u)\nabla u=\nabla
u/\varrho(u)$; thus, substituting the value of $\varrho$, we obtain
the inequality
\begin{equation*}
|\nabla
u(t,x)|\,u(t,x)^{-2/p}\leq\left[\frac{p^2}{2(2k+p-2)p(p-1)t}\right]^{1/p},
\end{equation*}
or equivalently \eqref{grad.est1}.

\medskip

\noindent \textbf{Case 2.} For $q\in[p/2,1)$, the term coming from
$R_2$ becomes negative and cannot be omitted. Instead, we will get
the gradient estimate by compensating its negative effect with the
positive term coming from $R_1$. Since $u(t,x)\leq\|u_0\|_{\infty}$
for any $(t,x)\in Q_{\infty}$ and $2q-p>0$, we have
\begin{equation*}
\begin{split}
R&=2(p-1)w^{(p+2)/2}-\frac{4(1-q)}{p}\left(\frac{p^2}{2(2k+p-2)}\right)^{q/p} u^{(2q-p)/p} w^{(q+2)/2}\\
&\geq
2(p-1)w^{(q+2)/2}\left[w^{(p-q)/2}-\frac{4(1-q)}{p}\left(\frac{p^2}{2(2k+p-2)}\right)^{q/p}\|u_0\|_{\infty}^{(2q-p)/p} \right],
\end{split}
\end{equation*}
hence
\begin{equation}
Lw:=\partial_{t}w-Aw-B\cdot\nabla
w+2(p-1) w^{(q+2)/2} \left( w^{(p-q)/2}-c_1 \right) \leq 0,
\end{equation}
where
\begin{equation*}
c_1:=\frac{4(1-q)}{p}\left(\frac{p^2}{2(2k+p-2)}\right)^{q/p} \|u_0\|_{\infty}^{(2q-p)/p}>0.
\end{equation*}
In a similar way as in the case $q\geq1$, we notice that the
function $W(t):=(2c_1)^{2/(p-q)}+(p(p-1)t/2)^{-2/p}$ is a
supersolution for the partial differential operator $L$, hence
\begin{equation*}
|\nabla
v(t,x)|\leq(2c_1)^{1/(p-q)}+\left(\frac{2}{p(p-1)t}\right)^{1/p},
\ (t,x)\in Q_{\infty}.
\end{equation*}
Since $|\nabla v|=|\nabla u|/\varrho(u)$, we deduce that there
exists a constant $C>0$ such that
\begin{equation*}
\left| \nabla u^{-(2-p)/p}(t,x) \right|\leq
C\left(\|u_0\|_{\infty}^{(2q-p)/p(p-q)}+t^{-1/p}\right),
\ (t,x)\in Q_{\infty},
\end{equation*}
as stated in \eqref{grad.est2}.

\subsection{Gradient estimates for $p>p_c$ and
$q<p/2$}\label{subsec.grad.est2}

In this case, we choose
\begin{equation}
\varrho(z)=\left(\frac{p-q}{k+p-q-1}\right)^{1/(p-q)}z^{1/(p-q)},
\end{equation}
noticing that
\begin{equation*}
k+p-q-1=\frac{p}{2}-q+\frac{2k+p-2}{2}=\frac{p}{2}-q+\frac{(2-p)(N+1)(p-p_c)}{4(p-1)}>0.
\end{equation*}
By straightforward calculations, it is immediate to check that
\begin{equation*}
R_1=R_2=\varrho(u)^{q-1}\varrho'(u)=\frac{1}{p-q}\left(\frac{p-q}{k+p-q-1}\right)^{q/(p-q)} u^{(2q-p)/(p-q)}\ge 0,
\end{equation*}
so that
\begin{equation*}
R=2(p-1)R_2w^{(q+2)/2}\left(w^{(p-q)/2}-\frac{1-q}{p-1}\right).
\end{equation*}
It follows that
\begin{equation}
Lw:=\partial_{t}w-Aw-B\cdot\nabla
w+2(p-1)R_2w^{(q+2)/2}\left(w^{(p-q)/2}-\frac{1-q}{p-1}\right)\leq 0.
\end{equation}
We next look for a supersolution of the form
$W(t)=(2(1-q)/(p-1))^{2/(p-q)}+Kt^{-2/p}$, with $K$ to be chosen
depending on $p$, $q$, $N$, and $\|u_0\|_{\infty}$. Taking into
account that $u(t,x)\leq\|u_0\|_{\infty}$ for any $(t,x)\in
Q_{\infty}$ and $2q-p<0$, we have
\begin{equation*}
R_2\geq\frac{1}{p-q}\left(\frac{p-q}{k+p-q-1}\right)^{q/(p-q)}\|u_0\|_{\infty}^{(2q-p)/(p-q)}
\end{equation*}
and
\begin{equation*}
\begin{split}
LW&=-\frac{2}{p}Kt^{-1-(2/p)}+(p-1)R_2\left[W^{(p+2)/2}+W^{(q+2)/2}
\left(W^{(p-q)/2}-2\frac{1-q}{p-1}\right)\right]\\
&\geq-\frac{2}{p}Kt^{-1-(2/p)}+(p-1)R_2K^{(p+2)/2}t^{-1-(2/p)}\\
&\geq\frac{2K}{p}\left[\frac{p(p-1)}{2(p-q)}\left(\frac{p-q}{k+p-q-1}\right)^{q/(p-q)} \|u_0\|_{\infty}^{(2q-p)/(p-q)}K^{p/2}-1\right]t^{-1-(2/p)}
\end{split}
\end{equation*}
hence, we find that $LW\geq 0$ provided that
$K=C\|u_0\|_{\infty}^{2(p-2q)/p(p-q)}$ for some sufficiently large
constant $C$. With this choice of $K$, the function $W$ becomes a
supersolution for $L$, and the comparison principle gives
\begin{equation*}
|\nabla v(t,x)|\leq
C\left(1+\|u_0\|_{\infty}^{(p-2q)/p(p-q)}t^{-1/p}\right),
\ (t,x)\in Q_\infty,
\end{equation*}
or equivalently
\begin{equation}\label{part.grad.est2}
|\nabla u(t,x)|u(t,x)^{-1/(p-q)}\leq
C\left(1+\|u_0\|_{\infty}^{(p-2q)/p(p-q)}t^{-1/p}\right), \ (t,x)\in
Q_\infty.
\end{equation}
Thus, we have a discussion with respect to the sign of $p-1-q$.
Indeed, if $q\in(p-1,p/2)$, we have
\begin{equation*}
|\nabla u^{-(q-p+1)/(p-q)}(t,x)|\leq
C\left(1+\|u_0\|_{\infty}^{(p-2q)/p(p-q)}t^{-1/p}\right),
\ (t,x)\in Q_\infty.
\end{equation*}
If $q=p-1$, we have the logarithmic estimate
\begin{equation*}
|\nabla\log u(t,x)|\leq
C\left(1+\|u_0\|_{\infty}^{(p-2q)/p(p-q)}t^{-1/p}\right),
\ (t,x)\in Q_\infty,
\end{equation*}
and if $q\in(0,p-1)$ we obtain a positive power estimate
\begin{equation*}
|\nabla u^{(p-q-1)/(p-q)}(t,x)|\leq
C\left(1+\|u_0\|_{\infty}^{(p-2q)/p(p-q)}t^{-1/p}\right),
\ (t,x)\in Q_\infty.
\end{equation*}
This completes the proof of Theorem~\ref{th:grad1}.

\subsection{Gradient estimates for $p=p_c$ (and $N\geq 2$)}\label{subsec.grad.est3}

\noindent \textbf{Case 1.} Let us consider first $q>p_c/2=N/(N+1)$.
In this case, the constant $k$ defined in \eqref{notations.cons} is
given by $k=(2-p)/2=1/(N+1)$. By analogy with some gradient
estimates obtained by Hamilton in \cite{Ha93} for the heat equation,
we choose the following function:
\begin{equation*}
\varrho(u)=u^{(N+1)/N}(\log M-\log u)^{(N+1)/2N}, \
M=e\|u_0\|_{\infty}.
\end{equation*}
Let us notice first that $\log M-\log u\geq1$. Then, we obtain
\begin{equation*}
\varrho'(u)=\frac{N+1}{2N} u^{1/N}\left[2\left( \log\frac{M}{u} \right)^{(N+1)/2N} - \left( \log\frac{M}{u} \right)^{-(N-1)/2N}\right]
\end{equation*}
and
\begin{equation*}
\begin{split}
\varrho''(u) =u^{-(N-1)/N} & \left[\frac{N+1}{N^2} \left( \log\frac{M}{u} \right)^{(N+1)/2N} -\frac{(N+1)(N+2)}{2N^2} \left( \log\frac{M}{u} \right)^{-(N-1)/2N} \right. \\
& \quad \left.-\frac{(N+1)(N-1)}{4N^2} \left( \log\frac{M}{u} \right)^{-((N-1)/2N)-1}\right] .
\end{split}
\end{equation*}
Hence, after an easy calculation, we have
\begin{equation*}
k(\varrho'(u))^2-\varrho(u)\varrho''(u) =u^{2/N}\left[\frac{N+1}{2N} \left( \log\frac{M}{u} \right)^{1/N}+\frac{N+1}{4N} \left( \log\frac{M}{u} \right)^{-(N-1)/N} \right],
\end{equation*}
which implies that
\begin{equation*}
R_1=\frac{N+1}{2N}+\frac{N+1}{4N}(\log M-\log
u)^{-1}\geq\frac{N+1}{2N}.
\end{equation*}
On the other hand, calculating $R_2$, we find:
\begin{equation*}
R_2=\frac{N+1}{2N} u^{(q(N+1)-N)/N}\  \left[ 2 \left( \log\frac{M}{u} \right)^{(N+1)q/2N} - \left( \log\frac{M}{u} \right)^{((N+1)q-2N)/2N} \right]>0,
\end{equation*}
since $(\log M-\log u)^{-1}\leq1<2$. Following the same division
into cases with respect to $q$, we assume first that $q\geq 1$. In
this case, we can simply omit the term coming from $R_2$, since
$(q-1)R_2\ge0$, and end up with
\begin{equation*}
R\geq\frac{N-1}{N} w^{(p_c+2)/2}.
\end{equation*}
Therefore
\begin{equation}
Lw:=\partial_{t}w-Aw-B\cdot\nabla
w+\frac{N-1}{N} w^{(p_c+2)/2}\leq0.
\end{equation}
Noticing that $W(t)=[(N+1)/(N-1)t]^{2/p_c}$ is a supersolution for
$L$, we obtain that
\begin{equation*}
|\nabla v(t,x)|\leq\left(\frac{N+1}{(N-1)t}\right)^{1/p_c}.
\end{equation*}
Coming back to the function $u$, this means
\begin{equation}
|\nabla
u^{-1/N}(t,x)|\leq\frac{1}{N}\left(\frac{N+1}{N-1}\right)^{(N+1)/2N}(\log
M-\log u(t,x))^{(N+1)/2N}t^{-(N+1)/2N}.
\end{equation}

\medskip

\noindent \textbf{Case 2.} Consider next $q\in(p_c/2,1)$, In this
case, we have to use again the strategy of compensation as in
Section~\ref{subsec.grad.est1}. First of all, we need to estimate
$R_2$ from above. To this end, since
$1/N\xi=[q(N+1)-N]/N<q(N+1)/2N$, we note that the function
$$
z\mapsto z^{(q(N+1)-N)/N}(\log M-\log z)^{q(N+1)/2N}
$$
attains its maximum over $(0,\|u_0\|_{\infty})$ at
$\|u_0\|_{\infty}e^{-(N\xi-1)/2}<\|u_0\|_{\infty}$. We deduce that
\begin{equation*}
R_2\leq\frac{N+1}{N}u^{(q(N+1)-N)/N} (\log M-\log u)^{(N+1)q/2N}\leq
C_1\|u_0\|_{\infty}^{(q(N+1)-N)/N},
\end{equation*}
hence
\begin{equation*}
\begin{split}
R & \geq\frac{N-1}{N} w^{(p_c+2)/2}-2(1-q) C_1\|u_0\|_{\infty}^{(q(N+1)-N)/N} w^{(q+2)/2} \\
& =\frac{N-1}{N}w^{(q+2)/2}\left(w^{(p_c-q)/2}-c_2\right),
\end{split}
\end{equation*}
where
$$c_2=\frac{2N(1-q)C_1}{N-1}\|u_0\|_{\infty}^{(q(N+1)-N)/N}.$$ We
now proceed as in Section~\ref{subsec.grad.est1} and notice that
$W(t)=(2c_2)^{2/(p_c-q)}+[2(N+1)/(N-1)t]^{2/p_c}$ is a
supersolution. By the comparison principle we obtain
\begin{equation*}
|\nabla
v(t,x)|\leq(2c_2)^{1/(p_c-q)}+\left(\frac{2(N+1)}{(N-1)t}\right)^{1/p_c}.
\end{equation*}
Going back to the definition of $u$, we find that
\begin{equation*}
\frac{|\nabla u(t,x)|}{\varrho(u(t,x))}\leq
C\left(\|u_0\|_{\infty}^{(q(N+1)-N)/N(p_c-q)}+t^{-(N+1)/2N} \right),
\end{equation*}
from which we deduce easily \eqref{grad.est7}, taking into account
the definition of $\varrho$.

Let us remark that this is an extension of the estimates that we
obtain for $p>p_c$ and $q>p/2$, since for $p=p_c$, we have
$(2-p)/p=1/N$. Thus the negative power of the gradient is the same
and the powers of $t$ and $\|u_0\|_{\infty}$ in the right-hand side
are also the same. The presence of the logarithmic corrections is
the mark of the critical exponent.

\medskip

\noindent \textbf{Case 3.} We now consider the case
$q=p_c/2=N/(N+1)$ and choose
\begin{equation*}
\varrho(u)=u^{(N+1)/N}(\log M-\log u)^{(N+1)/N}, \
M=e\|u_0\|_{\infty}.
\end{equation*}
Then
\begin{equation*}
\varrho'(u)=\frac{N+1}{N}u^{1/N}\left[(\log M-\log
u)^{(N+1)/N}-(\log M-\log u)^{1/N}\right]
\end{equation*}
and
\begin{equation*}
\varrho''(u)=\frac{N+1}{N^2}u^{-(N-1)/N}\left[\left(\log\frac{M}{u}
\right)^{(N+1)/N}-(N+2)\left(\log\frac{M}{u}\right)^{1/N}+\left(\log\frac{M}{u}\right)^{-(N-1)/N}\right].
\end{equation*}
Thus, after straightforward calculations, we obtain
\begin{equation*}
R_1=\frac{N+1}{N}(\log M-\log u), \quad R_2=\frac{N+1}{N}(\log
M-\log u-1).
\end{equation*}
Therefore
\begin{equation*}
\begin{split}
R&=\frac{2(N-1)}{N}(\log M-\log u) w^{(p_c+2)/2}-\frac{2}{N}(\log
M-\log u)w^{(q+2)/2}+\frac{2}{N}w^{(q+2)/2}\\
&\geq\frac{1}{N}(\log M-\log
u)\left[2(N-1)w^{(p_c+2)/2}-2w^{(q+2)/2}\right],
\end{split}
\end{equation*}
and
\begin{equation*}
Lw:=\partial_{t}w-Aw-B\cdot\nabla w+\frac{1}{N}(\log M-\log
u)\left[2(N-1)w^{(p_c+2)/2}-2w^{(q+2)/2}\right]\leq 0.
\end{equation*}
As a supersolution, we take
$$W(t)=\left(\frac{2}{N-1}\right)^{2(N+1)/N}+\left(\frac{N+1}{(N-1)t}\right)^{(N+1)/N}$$
and deduce, recalling that $N\geq2$ and that $\log M-\log u\geq1$:
\begin{equation*}
\begin{split}
LW(t)&=-\left(\frac{N+1}{N-1}\right)^{(N+1)/N}\frac{N+1}{N}t^{-(2N+1)/N}+\frac{N-1}{N}(\log
M-\log u)W(t)^{(2N+1)/(N+1)}\\&+\frac{1}{N}(\log M-\log
u)W(t)^{(3N+2)/2(N+1)}\left((N-1)W(t)^{N/2(N+1)}-2\right)\\
&\geq-\left(\frac{N+1}{N-1}\right)^{(N+1)/N}\frac{N+1}{N}t^{-(2N+1)/N}+\frac{N-1}{N}\left(\frac{N+1}{(N-1)t}\right)^{(2N+1)/N}=0.
\end{split}
\end{equation*}
The comparison principle gives
$$
|\nabla
v(t,x)|\leq\left(\frac{2}{N-1}\right)^{(N+1)/N}+\left(\frac{N+1}{(N-1)t}\right)^{(N+1)/2N},
$$
which implies \eqref{grad.est.ex}.

\medskip

\noindent \textbf{Case 4.} Finally, for $p=p_c$ and $q\in(0,p/2)$,
we notice that $k+p_c-q-1=(p_c-2q)/2>0$, hence we proceed as in
Section~\ref{subsec.grad.est2}. The estimates \eqref{grad.est3},
\eqref{grad.est4} and \eqref{grad.est5} hold according to whether
$q\in(p_c-1,p_c/2)$, $q=p_c-1$ or $q\in(0,p_c-1)$. This ends the
proof of Theorem~\ref{th:grad2}.

\subsection{Gradient estimates for $p<p_c$ and
$q\geq1-k$}\label{subsec.grad.est4}

We want now to follow the same idea as in
Section~\ref{subsec.grad.est1} and look for a function $\varrho$
such that $R_1=1$, that is, $\varrho$ is a solution of the following
ordinary differential equation:
\begin{equation}\label{ODE}
k(\varrho')^2-\varrho\varrho''=\varrho^{2-p}.
\end{equation}
This equation can be reduced to a first order ordinary differential
equation by using the standard trick of forcing the change of
variable $\varrho'=f(\varrho)$, thus
$\varrho''=f(\varrho)f'(\varrho)$. Then $f(\varrho)$ solves the
ordinary differential equation
\begin{equation*}
f'(\varrho)f(\varrho)=\frac{k}{\varrho}f^{2}(\varrho)-\varrho^{1-p},
\end{equation*}
which can be explicitly integrated if we make a further change of
variable by letting $f(\varrho)=\varrho^{k}g(\varrho)$. Then
\begin{equation*}
g(\varrho)g'(\varrho)=-\varrho^{1-p-2k},
\end{equation*}
and, since $2-p-2k=(2-p)(N+1)(p_c-p)/2(p-1)>0$, we find
$$
g(\varrho) = \left(\frac{2(K_0^{2-p-2k}-\varrho(r)^{2-p-2k})}{2-p-2k}\right)^{1/2} ,
$$
where $K_0$ is a generic constant. Coming back to the initial
variable $r$, \eqref{ODE} transforms to
\begin{equation}\label{ODE2}
\varrho'(r)=f(\varrho(r))=\varrho(r)^{k}\left(\frac{2(K_0^{2-p-2k}-\varrho(r)^{2-p-2k})}{2-p-2k}\right)^{1/2} .
\end{equation}
In other words, $\varrho$ is given in an implicit form through the
integral expression
$$
\left(\frac{2-p-2k}{2}\right)^{1/2}\int_0^{\varrho(r)}\frac{dz}{z^k(K_0^{2-p-2k}-z^{2-p-2k})^{1/2}}=r,
\quad r\in \left[ 0,\|u_0\|_{\infty} \right].
$$
Using the homogeneity of the integrand to scale $K_0$ out, we end up
with
$$
\left(\frac{(2-p-2k)K_0^{p}}{2}\right)^{1/2}\int_0^{\varrho(r)/K_0}\frac{dz}{z^k(1-z^{2-p-2k})^{1/2}}=r.
$$
A natural choice is then to take $\varrho(\|u_0\|_{\infty})=K_0$
which leads to
$$
\left(\frac{(2-p-2k)K_0^{p}}{2}\right)^{1/2}\int_0^{1}\frac{dz}{z^k(1-z^{2-p-2k})^{1/2}}=\|u_0\|_{\infty},
$$
that is,
\begin{equation}\label{est.C}
K_0=\kappa\ \|u_0\|_{\infty}^{2/p}
\end{equation}
for some positive constant $\kappa$ depending only on $N$, $p$, and
$q$. We also deduce from \eqref{ODE2} that $\varrho'(r)\leq
C\varrho(r)^{k}K_0^{(2-p-2k)/2}$, hence, since $k<1$ and
$\varrho(0)=0$, we find
\begin{equation}\label{est.rho}
\varrho(r)\leq CK_0^{(2-p-2k)/2(1-k)} r^{1/(1-k)}, \quad
r\in\left[ 0,\|u_0\|_{\infty} \right].
\end{equation}
We may now proceed along the lines of
Section~\ref{subsec.grad.est1}. Since $R_1=1$ by \eqref{ODE}, it
follows from \eqref{R} and \eqref{ODE2} that
\begin{equation}\label{generalR}
R=2(p-1)w^{(p+2)/2}+2(q-1)\varrho(u)^{q-1+k}\left(\frac{2(K_0^{2-p-2k}-\varrho(u)^{2-p-2k})}{2-p-2k}\right)^{1/2} w^{(q+2)/2}.
\end{equation}
If $q\geq1$ we omit the term coming from $R_2$ as it is non-negative
and deduce from \eqref{generalR} and the comparison principle that 
\begin{equation}\label{grad.int}
|\nabla u(t,x)|\leq\varrho(u(t,x))(p(p-1)t)^{-1/p}, \quad (t,x)\in Q_{\infty}.
\end{equation}
We plug the estimates \eqref{est.C} and \eqref{est.rho} into
\eqref{grad.int} and obtain the following estimate
\begin{equation*}
|\nabla u(t,x)|u(t,x)^{-1/(1-k)}\leq
C\|u_0\|_{\infty}^{(2-p-2k)/p(1-k)} t^{-1/p},
\end{equation*}
whence
\begin{equation}\label{grad.est8}
|\nabla u^{-k/(1-k)}(t,x)|\leq C\|u_0\|_{\infty}^{(2-p-2k)/p(1-k)} t^{-1/p},
\end{equation}
if $k\neq0$ (that is, $p\neq p_{sc}$) and
\begin{equation}\label{grad.est9}
|\nabla\log u(t,x)|\leq C\|u_0\|_{\infty}^{(2-p)/p} t^{-1/p},
\end{equation}
if $k=0$, that is, $p=p_{sc}=2(N+1)/(N+3)$.

We are left with the case $q\in[1-k,1)$ (which is only possible if
$k>0$, thus $p>p_{sc}$). In this case, starting from
\eqref{generalR}, we use the monotonicity of $\varrho$, the identity
\eqref{est.C} and \eqref{est.rho}, and compensate the negative term
coming from $R_2$ in the following way:
\begin{equation*}
\begin{split}
R&\geq2(p-1)w^{(p+2)/2}-2(1-q)CK_0^{(2-p-2k)/2}\varrho(\|u_0\|_{\infty})^{q-1+k} w^{(q+2)/2}\\
&\geq2(p-1)w^{(p+2)/2}-C\|u_0\|_{\infty}^{((2-p-2k)+2(q-1+k))/p} w^{(q+2)/2} \\
&=2(p-1)w^{(p+2)/2}-C\|u_0\|_{\infty}^{(2q-p)/p}w^{(q+2)/2}.
\end{split}
\end{equation*}
Arguing as in Section~\ref{subsec.grad.est1}, we conclude that
\begin{equation*}
|\nabla u(t,x)|\leq
C\varrho(u(t,x))\left(\|u_0\|_{\infty}^{(2q-p)/p(p-q)}+t^{-1/p}\right),\quad (t,x)\in Q_\infty.
\end{equation*}
Using again the estimates \eqref{est.C} and \eqref{est.rho}, we
arrive to our final estimate
\begin{equation}\label{grad.est10}
\left| \nabla u^{-k/(1-k)}(t,x) \right|\leq
C\|u_0\|_{\infty}^{(2-p-2k)/p(1-k)}\left(\|u_0\|_{\infty}^{(2q-p)/p(p-q)}+t^{-1/p}\right)
\end{equation}
for $(t,x)\in Q_\infty$.

\subsection{Gradient estimates for the singular diffusion equation \eqref{eq:diffusion}}
\label{subsec.grad.SD}

A careful look at the proofs of the gradient estimates
\eqref{grad.est1}, \eqref{grad.est6}, \eqref{grad.est8}, and
\eqref{grad.est9} reveals that the contribution from the absorption
term is always omitted so that these estimates are also true for
solutions to the singular diffusion equation \eqref{eq:diffusion}
with initial data satisfying \eqref{assump}. Since these gradient
estimates seem to have been unnoticed before, we provide here a
precise statement.

\begin{theorem}\label{th:graddiff}
Consider a function $u_0$ satisfying \eqref{assump} and let
$\Phi$ be the solution to \eqref{eq:diffusion} with initial
condition $u_0$. Then:
\begin{enumerate}[(i)]
\item For $p\in (p_c,2)$, we have
$$ \left|\nabla \Phi^{-(2-p)/p}(t,x)\right|\leq\left(\frac{2-p}{p}\right)^{(p-1)/p}\eta^{1/p}t^{-1/p},
\ (t,x)\in Q_\infty.
$$
\item For $p=p_c$, we have
$$
\left| \nabla \Phi^{-1/N}(t,x) \right|\leq C \left(
\log\left(\frac{e \|u_0\|_\infty}{\Phi(t,x)} \right)
\right)^{1/p_c}t^{-1/p_c}, \ (t,x)\in Q_\infty.
$$
\item For $p\in (p_{sc},p_c)$, we have $k\in (0,1)$ and
$$
\left| \nabla \Phi^{-k/(1-k)}(t,x) \right|\leq C\|u_0\|_{\infty}^{(2-p-2k)/p(1-k)} t^{-1/p}
$$
for $(t,x)\in Q_\infty$ such that $\Phi(t,x)>0$.
\item For $p=p_{sc}$, we have $k=0$ and
$$
|\nabla\log \Phi(t,x)|\leq C\|u_0\|_{\infty}^{(2-p)/p} t^{-1/p}
$$
for $(t,x)\in Q_\infty$ such that $\Phi(t,x)>0$.
\item For $p\in (1,p_{sc})$, we have $k<0$ and
$$
\left| \nabla \Phi^{|k|/(1+|k|)}(t,x) \right|\leq C\|u_0\|_{\infty}^{(2-p-2k)/p(1-k)} t^{-1/p}, \ (t,x)\in Q_\infty.
$$
\end{enumerate}
\end{theorem}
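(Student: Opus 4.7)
The plan is to replay the Bernstein arguments of Section~\ref{sec.grad} verbatim with the absorption term formally switched off. Inspecting the derivation leading to Lemma~\ref{techlemma}, one sees that the only contribution of the Hamilton--Jacobi term $|\nabla u|^q$ to the differential inequality satisfied by $w = |\nabla v|^2$ (with $v = \varphi^{-1}(\Phi)$) is the summand $2(q-1) R_2 w^{(q+2)/2}$ inside $R$. Suppressing it, which is exactly what it means to work with \eqref{eq:diffusion} rather than \eqref{eq1}, yields
$$\partial_t w - A w - B \cdot \nabla w + 2(p-1) R_1 w^{(p+2)/2} \leq 0,$$
in which the reactive term is automatically non-negative as soon as $R_1 \geq 0$. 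Consequently, the delicate compensation arguments that forced restrictions on $q$ in Theorems~\ref{th:grad1}--\ref{th:grad2} disappear, and for each range of $p$ one retains the same auxiliary function $\varrho$ as before.

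For case (i), taking $\varrho(z) = (p^2/(2(2k+p-2)))^{1/p} z^{2/p}$ as in Section~\ref{subsec.grad.est1} gives $R_1 = 1$, so that $W(t) = (p(p-1)t)^{-2/p}$ is a supersolution and the comparison principle delivers $|\nabla v(t,x)| \leq (p(p-1) t)^{-1/p}$, which unwinds to the claimed bound on $|\nabla \Phi^{-(2-p)/p}|$. Case (ii) follows identically from Section~\ref{subsec.grad.est3} (Case~1), with $\varrho(u) = u^{(N+1)/N}(\log M - \log u)^{(N+1)/2N}$ where $M = e\|u_0\|_\infty$, yielding the logarithmic correction. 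For cases (iii)--(v), one invokes the implicitly defined $\varrho$ of Section~\ref{subsec.grad.est4}, which was tailored so that $R_1 = 1$; since there is no longer an $R_2$ contribution to compensate, the restriction $q \geq 1-k$ imposed there becomes irrelevant, and the construction works as soon as $2-p-2k > 0$ (i.e.\ $p < p_c$) together with $k < 1$, which is automatic for $p < 2$. Inserting the bounds \eqref{est.C}--\eqref{est.rho} on $\varrho$ into $|\nabla v| \leq (p(p-1)t)^{-1/p}$ produces
$$|\nabla \Phi(t,x)|\ \Phi(t,x)^{-1/(1-k)} \leq C\, \|u_0\|_\infty^{(2-p-2k)/p(1-k)}\, t^{-1/p}.$$

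The remaining nuance is purely cosmetic, namely rewriting this inequality in the form asserted by the statement according to the sign of $k$. When $k \in (0,1)$ (case iii) the exponent $-k/(1-k)$ is negative and the estimate is meaningful only on the positivity set $\{\Phi > 0\}$; when $k = 0$ (case iv), $-k/(1-k)$ degenerates into the logarithm as in \eqref{grad.est9}; when $k < 0$ (case v), the exponent $|k|/(1+|k|) = -k/(1-k)$ is positive, the corresponding power of $\Phi$ is defined on all of $Q_\infty$, and no positivity restriction is needed. I foresee no genuine obstacle: the main point is simply to observe that the $q$-dependent restrictions in the original arguments were artifacts of the absorption term, and that the same auxiliary $\varrho$ serves unified purposes across $p \in (1,2)$ once they are dropped. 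As in Section~\ref{sec.grad}, the reasoning is formal and is made rigorous by the regularization procedure of Section~\ref{se:wp} followed by passage to the limit in the regularization parameter.
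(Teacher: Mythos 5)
Your proposal is correct and follows the paper's own route: Section~\ref{subsec.grad.SD} literally observes that the estimates \eqref{grad.est1}, \eqref{grad.est6}, \eqref{grad.est8}, and \eqref{grad.est9} were obtained by omitting the absorption contribution $2(q-1)R_2 w^{(q+2)/2}$, so they transfer verbatim to \eqref{eq:diffusion}, exactly the observation you make. Your added remark that the restriction $q\geq 1-k$ in Section~\ref{subsec.grad.est4} evaporates once there is no $R_2$ to compensate, and the case discussion on the sign of $k$ unifying (iii)--(v), supplies precisely the bookkeeping the paper leaves implicit.
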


\subsection{A gradient estimate coming from the
Hamilton-Jacobi term}\label{subsec.grad.HJ}

Apart from the previous gradient estimates, which result either from
the sole diffusion or are the outcome of the competition between the
two terms, we can prove another one which is an extension of a known
result for the non-diffusive Hamilton-Jacobi equation. We assume
that $p>p_{sc}=2(N+1)/(N+3)$, although in the applications we will
only need the range $p\geq p_c$.

\medskip

\noindent \textbf{Case 1: $q<1$}. As in \cite{GGK03}, take
$\varphi(r)=\|u_0\|_{\infty}-r^2$ directly in \eqref{R1} and
\eqref{R2}. Then $v=(\|u_0\|_{\infty}-u)^{1/2}$, and
\begin{equation*}
R_2=-2^{q-1}v^{q-2}, \quad R_1=2^{p-2}(1+k)v^{p-4}\,.
\end{equation*}
Since we are in the range $q<1$ and $p>p_{sc}$, we notice that
$R_1>0$ and we can forget about the effect of this term. We deduce
that
\begin{equation*}
Lw:=\partial_{t}w-Aw-B\cdot\nabla
w+2^{q}(1-q)(\|u_0\|_{\infty}-u)^{(q-2)/2} w^{(q+2)/2}\leq 0\,.
\end{equation*}
We then notice that the function
$W(t)=K\|u_0\|_{\infty}^{(2-q)/q}t^{-2/q}$, with a suitable choice
of $K$, is a supersolution for the operator $L$, since
\begin{equation*}
\begin{split}
LW(t)&=\left[2^q(1-q)K^{(q+2)/2}\left(\|u_0\|_{\infty}-u\right)^{(q-2)/2}\|u_0\|_{\infty}^{((2-q)/q)+((2-q)/2)} \right. \\
& \hspace{3cm} - \left. \frac{2}{q}K\|u_0\|_{\infty}^{(2-q)/q}\right] t^{-(q+2)/q}\\
&\geq2K\left[2^{q-1}(1-q)K^{q/2}-\frac{1}{q}\right]\|u_0\|_{\infty}^{(2-q)/q}t^{-(q+2)/q}\geq 0
\end{split}
\end{equation*}
as soon as we choose $K^{q/2}=2^{1-q}(1-q^2)$. By the comparison
principle, we find that
\begin{equation*}
\left|\nabla(\|u_0\|_{\infty}-u(t,x))^{1/2}\right|\leq
C\|u_0\|_{\infty}^{(2-q)/2q} t^{-1/q}.
\end{equation*}
Noticing that
\begin{equation*}
2\left|\nabla(\|u_0\|_{\infty}-u(t,x))^{1/2}\right|=(\|u_0\|_{\infty}-u(t,x))^{-1/2} |\nabla
u(t,x)|\geq \|u_0\|_{\infty}^{-1/2} |\nabla u(t,x)|,
\end{equation*}
we conclude that
\begin{equation*}
|\nabla u(t,x)|\leq C\|u_0\|_{\infty}^{1/q}t^{-1/q},
\ (t,x)\in Q_{\infty}.
\end{equation*}

\medskip

\noindent \textbf{Case 2: $q>1$}. In this case, let us take
$\varrho(u)=u^{1/q}$ in \eqref{altR1} and \eqref{altR2}, as in
\cite{BL99}. We calculate
\begin{equation*}
R_1=\frac{k+q-1}{q^2}u^{(p-2q)/q}>0, \quad R_2=\frac{1}{q}.
\end{equation*}
Since we want only an estimate coming from the absorption term, we
omit $R_1$ and we have
\begin{equation*}
Lw:=\partial_{t}w-Aw-B\cdot\nabla
w+\frac{2(q-1)}{q}w^{(2+q)/2}\leq 0.
\end{equation*}
We then notice that the function $W(t)=[(q-1)t]^{-2/q}$ is a
supersolution for the operator $L$. By the comparison principle, we
find that
\begin{equation*}
|\nabla u(t,x)|\leq\varrho(u(t,x))\left[\frac{1}{(q-1)t}\right]^{1/q},
\end{equation*}
or equivalently
\begin{equation*}
\left|\nabla u^{(q-1)/q}(t,x)\right|\leq\frac{1}{q}(q-1)^{(q-1)/q}t^{-1/q},
\quad (t,x)\in Q_{\infty}.
\end{equation*}

\begin{remark}\label{rem:4} There is no gradient estimate produced by
the Hamilton-Jacobi term for $q=1$, since its contribution vanishes
in \eqref{R}. This is in fact due to the lack of strict convexity
(or concavity) of the euclidean norm.
\end{remark}

\section{Decay estimates for integrable initial
data}\label{sec.decay}

We devote this section to the proof of Theorem~\ref{th:warmup}.
These decay rates will be improved in Section~\ref{se:idrae} for
$p>p_c$ and initial data which decay at infinity more rapidly than
what is required by mere integrability.

\begin{proposition}\label{prop.decay}
Let $u$ be a solution to \eqref{eq1}-\eqref{a2} with an initial
condition $u_0$ satisfying \eqref{assump}. The following decay
estimates hold:

\noindent (i) If $p>p_c$ and $q>q_{\star}=p-N/(N+1)$, we have
\begin{equation}\label{decay1.big}
\|u(t)\|_{\infty}\leq C\|u_{0}\|_{1}^{p\eta}t^{-N\eta}, \ t>0,
\end{equation}
where $\eta=1/[N(p-2)+p]$.

\noindent (ii) If $p>p_c$ and $q\in(N/(N+1),q_{\star}]$, we have
\begin{equation}\label{decay1.small}
\|u(t)\|_{\infty}\leq C\|u_0\|_{1}^{q\xi}t^{-N\xi}, \ t>0,
\end{equation}
where $\xi=1/[q(N+1)-N]$.
\end{proposition}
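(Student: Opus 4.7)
The overall strategy combines three ingredients: the non-increase in time of $\|u(t)\|_1$, the gradient estimates from Section~\ref{sec.grad}, and a classical volume argument at the maximum point of $u(t,\cdot)$. To obtain the $L^1$ bound I would integrate \eqref{eq1} over $\RR^N$: the $p$-Laplacian is a pure divergence, so formally $\tfrac{d}{dt}\|u(t)\|_1 = -\|\nabla u(t)\|_q^q \le 0$. Making this rigorous for viscosity solutions relies on the regularized approximation of Section~\ref{se:wp} together with enough spatial decay on $u$ (inherited from $u_0 \in L^1$ and \eqref{prunelle}) to kill the boundary terms. Throughout, I write $M(t):=\|u(t)\|_\infty$ and work at a point $x_0$ where $u(t,\cdot)$ essentially attains $M(t)$ (rigorously $u(t,x_0)\ge (1-\varepsilon)M(t)$, then $\varepsilon\to 0$).

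For case (i) with $q\ge 1$, I invoke the estimate $|\nabla u^{-(2-p)/p}(t,x)| \le C t^{-1/p}$ of Theorem~\ref{th:grad1}(i), which holds on $\cp$. Integrating along the segment from $x_0$ to $x$ yields $u(t,x)^{-(2-p)/p} \le M(t)^{-(2-p)/p} + C t^{-1/p}|x-x_0|$, so $u(t,x) \ge M(t)/2^{p/(2-p)}$ on a ball $B_r(x_0)\subset\cp$ with $r = c\,M(t)^{-(2-p)/p}\,t^{1/p}$. The $L^1$-bound then gives
\begin{equation*}
\|u_0\|_1 \ge c\,M(t)\,r^N = c\,M(t)^{1 - N(2-p)/p}\,t^{N/p} = c\,M(t)^{1/(p\eta)}\,t^{N/p},
\end{equation*}
which rearranges into \eqref{decay1.big}. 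For $q\in(q_\star,1)$, Theorem~\ref{th:grad1}(ii) is used instead but applied starting from time $t/2$, so that $\|u_0\|_\infty$ is replaced by $M(t/2)$, and one concludes with the same bootstrap described below, using $\phi(t):=M(t)\,t^{N\eta}$.

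For case (ii), I would use the Hamilton--Jacobi gradient estimate of Theorem~\ref{th:grad3}. When $q<1$, applied from time $t/2$ it yields $|\nabla u(t,\cdot)| \le C\,M(t/2)^{1/q}\,t^{-1/q}$. The maximum-point argument run directly on $|\nabla u|$ produces a ball of radius $r = c\,M(t)\,t^{1/q}/M(t/2)^{1/q}$ on which $u(t,\cdot)\ge M(t)/2$, and the $L^1$-bound becomes
\begin{equation*}
M(t)^{N+1} \le C\,\|u_0\|_1\,M(t/2)^{N/q}\,t^{-N/q}.
\end{equation*}
Setting $\phi(t):=M(t)\,t^{N\xi}$ and using the algebraic identity $N\xi(N+1)=N^2\xi/q+N/q$ (which follows directly from $\xi=1/(q(N+1)-N)$) turns this into $\phi(t)^{N+1} \le C\|u_0\|_1\,\phi(t/2)^{N/q}$. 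Since $\theta:=N/(q(N+1))<1$ for $q>N/(N+1)$ and since $\Psi(T):=\sup_{t\in(0,T]}\phi(t)\le \|u_0\|_\infty T^{N\xi}<\infty$, taking the supremum gives $\Psi(T)^{1-\theta} \le C\,\|u_0\|_1^{1/(N+1)}$, i.e. $\Psi(T)\le C\,\|u_0\|_1^{q\xi}$, which is \eqref{decay1.small}. When $q\ge 1$, the $u_0$-independent bound $|\nabla u^{(q-1)/q}|\le C t^{-1/q}$ of Theorem~\ref{th:grad3} makes the argument direct.

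The main obstacle is pairing a $u_0$-dependent gradient estimate (unavoidable for $q<1$) with a final decay rate involving only $\|u_0\|_1$. The remedy is the bootstrap on $\phi=M\cdot t^{N\xi}$ (resp.\ $M\cdot t^{N\eta}$): the exponent is chosen precisely so that the $t$-powers cancel thanks to the algebraic identity encoded in $\xi$ (resp.\ $\eta$), and the iteration closes because the contraction exponent $\theta=N/(q(N+1))$ is strictly less than $1$ exactly in the range of $q$ covered by the proposition. Verifying that the final exponent of $\|u_0\|_1$ comes out to $q\xi$ (resp.\ $p\eta$) is the delicate bookkeeping step.
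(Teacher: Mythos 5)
Your case (ii) argument for $q\neq 1$ is essentially the paper's: the proof cites \cite[Proposition~1.4]{BtL08} and \cite[Theorem~1]{BLS01}, and those references use exactly the Hamilton--Jacobi gradient estimates of Theorem~\ref{th:grad3} together with the $L^1$ monotonicity, a volume estimate at the maximum, and the $\phi(t)=M(t)t^{N\xi}$ bootstrap you set up; your bookkeeping there checks out. However, two genuine gaps remain.

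First, in case (i) for $q_\star<q<1$ your bootstrap does not close as stated. You propose to use \eqref{grad.est2} restarted at $t/2$ and then ``conclude with the same bootstrap'' with $\phi(t)=M(t)t^{N\eta}$. In case (ii) the exponent $N\xi$ is tuned so that the time powers cancel exactly, yielding the clean relation $\phi(t)^{N+1}\le C\|u_0\|_1\,\phi(t/2)^{N/q}$. Here the same manipulation gives instead
\begin{equation*}
\phi(t)^{1/(p\eta)} \le C\|u_0\|_1\left(\,\phi(t/2)^{a}\,2^{N\eta a}\,t^{-\alpha}+C\,\right)^{N},\qquad
a=\tfrac{2q-p}{p(p-q)},\quad \alpha := N\eta a-\tfrac{1}{p},
\end{equation*}
and $\alpha>0$ precisely when $q>q_\star$; the factor $t^{-\alpha}$ blows up as $t\to0^+$, so taking $\sup_{t\le T}$ gives a vacuous bound. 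This can presumably be repaired by treating small times directly from \eqref{grad.est2} (where $t^{-1/p}$ dominates $\|u_0\|_\infty^{a}$) and then relaying, but that is extra work you do not carry out. The paper avoids the whole issue: since the absorption term is nonnegative, $u$ is a subsolution to the pure diffusion equation \eqref{eq:diffusion}, so $u\le\Phi$, and \eqref{decay1.big} is immediate from the $L^1$--$L^\infty$ smoothing of \cite[Theorem~3]{HV81}, uniformly in $q$. (Your $q\ge1$ sub-case is fine and in effect re-derives that smoothing, since \eqref{grad.est1} is itself obtained by dropping the absorption.)

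Second, you have no argument for $q=1$, which lies in the range of case~(ii) whenever $q_\star\ge1$, i.e.\ $p\ge(2N+1)/(N+1)$. Estimate \eqref{grad.estHJ} is stated for $q\in(0,1)$, while \eqref{grad.estHJ2} is stated for $q>1$ and degenerates at $q=1$ (the exponent $(q-1)/q$ vanishes); Remark~\ref{rem:4} records that no Hamilton--Jacobi gradient estimate is available at $q=1$. The paper handles this by reproducing \cite[Section~3]{BLS01} verbatim for $q=1$; that step is missing here, so your ``when $q\ge1$ the argument is direct'' does not cover $q=1$.
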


\begin{proof}
Denoting the solution to \eqref{eq:diffusion} with initial condition
$u_0$ by $\Phi$, the comparison principle guarantees that
$u\le\Phi$ in $Q_\infty$ and \eqref{decay1.big} readily follows
from \cite[Theorem~3]{HV81}. Next, the proof of \eqref{decay1.small}
for $q>1$ and $q\in (N/(N+1),1)$ relies on \eqref{grad.estHJ2} and
\eqref{grad.estHJ}, respectively, and is the same as that of
\cite[Proposition~1.4]{BtL08} and \cite[Theorem~1]{BLS01} to which
we refer. For $q=1$ we reproduce \emph{verbatim} the proof in
\cite[Section~3]{BLS01}.
\end{proof}

Since \eqref{eq1} is an autonomous equation, a simple consequence of
Proposition~\ref{prop.decay} is the following:

\begin{corollary}
Let $u$ be a solution of \eqref{eq1}-\eqref{a2} with an initial
condition $u_0$ satisfying \eqref{assump}. For $p\in(p_c,2)$ and
$q\in (N/(N+1),q_\star]$, we have \beqn \|u(t)\|_\infty \le C\
\|u(s)\|_1^{q\xi}\ (t-s)^{-N\xi}\,, \quad 0\le s < t. \label{spirou}
\eeqn For $q>q_{\star}$ we have \beqn \|u(t)\|_\infty \le C\
\|u(s)\|_1^{p\eta}\ (t-s)^{-N\eta}\,, \quad 0\le s < t.
\label{spirou2} \eeqn
\end{corollary}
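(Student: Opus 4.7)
The plan is to exploit the time-translation invariance of \eqref{eq1}. Fix $0 \le s < t$ and define $v(\tau,x) := u(s+\tau,x)$ for $(\tau,x) \in Q_\infty$. Since \eqref{eq1} is autonomous, $v$ is a non-negative viscosity solution to \eqref{eq1} on $Q_\infty$ with initial datum $v_0 := u(s,\cdot)$. I will then apply Proposition~\ref{prop.decay} to $v$ at time $\tau = t-s$.

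The main step is to verify that $v_0$ satisfies \eqref{assump}, so that Proposition~\ref{prop.decay} is indeed applicable. Non-negativity is inherited from $u_0$. For the $W^{1,\infty}$ regularity, the gradient bound \eqref{prunelle} established in Section~\ref{sec.grad} gives $\|\nabla v_0\|_\infty = \|\nabla u(s)\|_\infty \le \|\nabla u_0\|_\infty$, while the maximum principle yields $\|v_0\|_\infty \le \|u_0\|_\infty$. For integrability, I invoke the non-increasing character of the $L^1$-norm of $u$ recalled just before Proposition~\ref{pr:pos}, which gives $\|v_0\|_1 \le \|u_0\|_1 < \infty$. Assuming in addition that $v_0 \not\equiv 0$, Proposition~\ref{prop.decay}(ii) applied to $v$ yields, in the range $q \in (N/(N+1), q_\star]$,
\begin{equation*}
\|u(t)\|_\infty = \|v(t-s)\|_\infty \le C \|v_0\|_1^{q\xi} (t-s)^{-N\xi} = C \|u(s)\|_1^{q\xi} (t-s)^{-N\xi},
\end{equation*}
which is exactly \eqref{spirou}, and analogously Proposition~\ref{prop.decay}(i) produces \eqref{spirou2} for $q > q_\star$.

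It only remains to handle the degenerate case $v_0 \equiv 0$. By the uniqueness part of Theorem~\ref{th:warmup}, the zero function is then the unique viscosity solution of \eqref{eq1} with that initial datum, so $v \equiv 0$ on $Q_\infty$, meaning $u(t,\cdot) \equiv 0$ for all $t \ge s$; both sides of the claimed inequalities vanish and the conclusion is trivial. I do not anticipate any substantive obstacle: the corollary is essentially a direct consequence of time-autonomy combined with the fact that the class \eqref{assump} is preserved by the flow, which is guaranteed by the $L^1$-monotonicity and the Lipschitz propagation estimate \eqref{prunelle} already available.
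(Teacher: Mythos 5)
Your proof is correct and follows the same route the paper implicitly takes: the paper states the corollary is ``a simple consequence of Proposition~\ref{prop.decay}'' since \eqref{eq1} is autonomous, and you have merely spelled out the verification that $u(s,\cdot)$ remains in the class \eqref{assump} (via \eqref{prunelle}, the maximum principle, and the $L^1$-monotonicity \eqref{decaymass}), plus the trivial case $u(s)\equiv 0$.
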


We next turn to the case $p=p_c$ and first establish that the
solutions to the singular diffusion equation \eqref{eq:diffusion}
with non-negative integrable initial data decay exponentially for
large times. Though this property is expected, a proof does not seem
to be available in the literature.

\begin{proposition}\label{prop:expdecsd}
Consider a function $u_0$ satisfying \eqref{assump} and let $\Phi$ be the solution to \eqref{eq:diffusion}  with initial condition $u_0$ and $p=p_c$. Then
\beqn
\|\Phi(t)\|_\infty \le C'\ \|u_0\|_\infty\ e^{-C t /\|u_0\|_1^{2/(N+1)}}\,, \quad t\ge 0\,.
\label{pim}
\eeqn
\end{proposition}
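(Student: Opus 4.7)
The plan is to combine the gradient estimate \eqref{grad.est6} applied to $\Phi$ with the conservation of the $L^1$-norm for \eqref{eq:diffusion} at $p=p_c$ (from \cite{HV81}) and the standard fact $\|\Phi(t)\|_\infty\le\|u_0\|_\infty$ (maximum principle). Write $M(t):=\|\Phi(t)\|_\infty$ and $M_0:=\|u_0\|_\infty$. The key idea is that the gradient estimate forces $\Phi(t,\cdot)$ to stay close to its supremum on a sufficiently large ball, whose volume is then constrained from above by mass conservation; after rearrangement this constraint turns out to be exactly exponential in $t$.

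\textbf{Execution.} Fix $t>0$ and $\varepsilon\in(0,M(t)/2)$, and pick $x_0\in\RR^N$ with $\Phi(t,x_0)\ge M(t)-\varepsilon$. Let $R>0$ be the largest radius such that $\Phi(t,\cdot)>M(t)/2$ on $B(x_0,R)$. Since $\Phi(t,\cdot)\in L^1(\RR^N)$ one has $R<\infty$, and by continuity there exists $x_1\in\overline{B(x_0,R)}$ with $\Phi(t,x_1)=M(t)/2$. Along the segment from $x_0$ to $x_1$ the function $\Phi$ takes values in $[M(t)/2,M_0]$ and stays positive, hence $\log(eM_0/\Phi)\le\log(2eM_0/M(t))$ on this segment. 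Integrating \eqref{grad.est6} from $x_0$ to $x_1$, then letting $\varepsilon\to 0$, yields
\[
(2^{1/N}-1)\,M(t)^{-1/N}\le C\left(\log\frac{2eM_0}{M(t)}\right)^{1/p_c}t^{-1/p_c}\,R,
\]
so that
\[
R\ge c\,M(t)^{-1/N}\,t^{1/p_c}\left(\log\frac{2eM_0}{M(t)}\right)^{-1/p_c}.
\]
Combining this with the pointwise bound $\Phi(t,\cdot)\ge M(t)/2$ on $B(x_0,R)$ and the mass conservation $\|\Phi(t)\|_1=\|u_0\|_1$ gives
\[
\|u_0\|_1\ge \tfrac{1}{2}M(t)\,\omega_N\,R^N\ge C_N\,t^{N/p_c}\left(\log\frac{2eM_0}{M(t)}\right)^{-N/p_c}.
\]
Since $N/p_c=(N+1)/2$, this rearranges into $\log(2eM_0/M(t))\ge c\,t/\|u_0\|_1^{2/(N+1)}$, which is equivalent to \eqref{pim} with $C'=2e$.

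\textbf{Main obstacle.} The delicate point is the implicit dependence of the logarithmic factor in \eqref{grad.est6} on the unknown value $\Phi(t,x)$: a naive integration from $x_0$ outwards fails because $\log(eM_0/\Phi)$ blows up as $\Phi\to 0$. Working only up to the boundary of the maximal ball where $\Phi\ge M(t)/2$ is what makes the argument close: it uniformly bounds the log factor along the integration path, while at the same time pinning down a boundary point at which $\Phi^{-1/N}$ has grown by the definite multiplicative factor $2^{1/N}$, which is exactly what converts the gradient estimate into the geometric lower bound on $R$ that drives the exponential decay.
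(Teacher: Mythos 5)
Your proof is correct and takes a genuinely different route from the one in the paper. Both start from the logarithmic gradient estimate \eqref{grad.est6} for $\Phi^{-1/N}$ and use conservation of mass for \eqref{eq:diffusion} at $p=p_c$, but they diverge at the point where the local, $\Phi(t,x)$-dependent log factor has to be tamed. The paper's proof is functional-analytic: it raises the gradient estimate to the power $p_c$, observes that $z\mapsto z^2\log(e^{3/2}\|u_0\|_\infty/z)$ is non-decreasing on $[0,\|u_0\|_\infty]$ so that the pointwise log can be replaced by $\log(e^{3/2}\|u_0\|_\infty/\|\Phi(t)\|_\infty)$ uniformly in $x$, and then closes the loop by plugging the resulting uniform Lipschitz bound into the Gagliardo--Nirenberg inequality $\|w\|_\infty\le C\|\nabla w\|_\infty^{N/(N+1)}\|w\|_1^{1/(N+1)}$. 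Your proof is geometric: you localize on the maximal ball $B(x_0,R)$ on which $\Phi(t,\cdot)>\tfrac12\|\Phi(t)\|_\infty$, integrate the gradient estimate along a radial segment to this ball's boundary (where the log factor is uniformly controlled by $\log(2e\|u_0\|_\infty/\|\Phi(t)\|_\infty)$), obtain a definite lower bound on $R$, and then constrain $R$ from above by mass conservation. The two routes are morally equivalent -- your maximal-ball argument is essentially a hand-made local version of the Gagliardo--Nirenberg interpolation -- but yours is more elementary (it avoids invoking GN as a black box) while the paper's is more compact and directly reusable. The exponents work out identically since $N/p_c=(N+1)/2$, and your handling of the $\varepsilon\to0$ limit (taking the limit of the lower bound on $R(\varepsilon)$ rather than of $R(\varepsilon)$ itself) is sound.
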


\begin{proof}
By Theorem~\ref{th:graddiff}, we have
\bean
|\nabla\Phi(t,x)| & = & N\ \Phi(t,x)^{(N+1)/N}\ \left| \nabla \Phi^{-1/N}(t,x) \right| \\
& \le & C\ \Phi(t,x)^{(N+1)/N}\ \left( \log\left( \frac{e \|u_0\|_\infty}{\Phi(t,x)} \right) \right)^{1/p_c}\ t^{-1/p_c} \\
|\nabla\Phi(t,x)|^{p_c} & \le & C\ \Phi(t,x)^2\ \left( \log\left(
\frac{e^{3/2} \|u_0\|_\infty}{\Phi(t,x)} \right) \right)\ t^{-1}\,.
\eean Noticing that the function $z\mapsto z^2 \log\left(e^{3/2}
\|u_0\|_\infty /z \right)$ is non-decreasing in $[0,\|u_0\|_\infty]$
and that $0\le \Phi \le \|u_0\|_\infty$ in $Q_\infty$, we conclude
that
$$
|\nabla\Phi(t,x)|^{p_c} \le C\ \|\Phi(t)\|_\infty^2\ \log\left( \frac{e^{3/2} \|u_0\|_\infty}{\|\Phi(t)\|_\infty} \right)\ t^{-1}
$$
for $(t,x)\in Q_\infty$, while the Gagliardo-Nirenberg inequality
\beqn \Vert w\Vert_\infty \le C\
\Vert\nabla w\Vert_\infty^{N/(N+1)}\ \Vert w\Vert_1^{1/(N+1)}
\;\;\mbox{ for }\;\; w\in L^1(\RR^N)\cap
W^{1,\infty}(\RR^N)\,,\label{GN} \eeqn
ensures that
$$
\|\Phi(t)\|_\infty^2 \le C\ \|\nabla\Phi(t)\|_\infty^{p_c}\ \|\Phi(t)\|_1^{p_c/N}\,, \quad t>0\,.
$$
Combining the above two inequalities with the conservation of mass $\|\Phi(t)\|_1=\|u_0\|_1$ \cite[Theorem~2]{HV81}, we end up with
\bean
\|\Phi(t)\|_\infty^2 & \le & C \|\Phi(t)\|_\infty^2\ \log\left( \frac{e^{3/2} \|u_0\|_\infty}{\|\Phi(t)\|_\infty} \right)\ \frac{\|u_0\|_1^{p_c/N}}{t} \\
e^{C t / \|u_0\|_1^{p_c/N}} & \le & \frac{e^{3/2} \|u_0\|_\infty}{\|\Phi(t)\|_\infty}\,,
\eean
from which \eqref{pim} follows.
\end{proof}

\begin{proof}[Proof of Theorem~\ref{th:warmup}]
The estimates \eqref{wu1} and \eqref{wu2} are proved in
Proposition~\ref{prop.decay}. The exponential decay \eqref{wu3}
follows from Proposition~\ref{prop:expdecsd} and the comparison
principle when $p=p_c$ while it is proved as in
\cite[Theorem~2]{BLS01} for $p>p_c$ and $q=N/(N+1)$, the main tool
of the proof being the gradient estimate \eqref{grad.estHJ}. For
$p\ge p_c$ and $q\in (0,N/(N+1))$, the finite time extinction
\eqref{wu4} is a feature of the absorption term and is also a
consequence of \eqref{grad.estHJ}. We refer to
\cite[Theorem~1]{BLS01} or \cite[Theorem~3.1]{HJbook} for a proof.
Finally, the extinction for $p\in (1,p_c)$ follows by comparison
with the singular diffusion equation \eqref{eq:diffusion} for which
finite time extinction is known to occur for initial data in
$L^r(\RR^N)$ with suitable $r$ \cite{BIV,HV81,VazquezSmoothing},
noting that $L^1\cap L^{\infty}\subset L^r$ for any
$r\in(1,\infty)$.
\end{proof}

\section{Large time behavior of $\|u(t)\|_{1}$}\label{sec.large}

In this section we study the possible values of the limit as
$t\to\infty$ of the $L^1$-norm of solutions $u$ to \eqref{eq1}-\eqref{a2} with
initial data $u_0$ satisfying \eqref{assump}. The case $p\in
(1,p_c)$ being obvious as $u$ vanishes identically after a finite
time by Theorem~\ref{th:warmup}, we assume in this section that
$p\ge p_c$ and first state the time monotonicity of the $L^1$-norm
of $u$ \beqn \|u(t)\|_1 \le \|u(s)\|_1 \le \|u_0\|_1\,, \qquad
t>s\ge 0\,, \label{gaston} \eeqn which follows by  construction of
the solution, see \eqref{decaymass} below. This last inequality can
actually be improved to an equality for $p\ge p_c$ as we shall see
now.

\begin{proposition}\label{le:u1}
If $p\in (p_c,2)$, $q\in [p/2,\infty)$, and $u_0$ satisfies
\eqref{assump}, then
\begin{equation}\label{u1}
\|u(t)\|_1 + \int_0^{t}\int|\nabla u(s,x)|^q\ dxds = \|u_0\|_1\,,
\qquad t\ge0.
\end{equation}
\end{proposition}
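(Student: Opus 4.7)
My plan is to establish the identity by passing to the limit in the corresponding $L^1$-balance satisfied by the regularized classical solutions $(u_\varepsilon)$ used in Section~\ref{se:wp} to construct $u$.

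For each $\varepsilon > 0$ the approximating function $u_\varepsilon$ is a smooth positive solution of a regularized version of \eqref{eq1}, with diffusion coefficient $(|\nabla u_\varepsilon|^2 + \varepsilon^2)^{(p-2)/2}$ and absorption $(|\nabla u_\varepsilon|^2 + \varepsilon^2)^{q/2}$, and with initial datum $u_{0,\varepsilon}$ converging to $u_0$ in $L^1(\RR^N)$. By construction $u_\varepsilon$ decays at spatial infinity rapidly enough (via comparison with shifted Barenblatt-type profiles and the uniform $L^1\cap L^\infty$ bounds) for the flux $|\nabla u_\varepsilon|^{p-1}$ to be integrable, so that spatial integration of the regularized equation followed by integration on $(0,t)$ produces the identity
\begin{equation*}
\|u_\varepsilon(t)\|_1 + \int_0^t\!\!\int_{\RR^N}(|\nabla u_\varepsilon|^2+\varepsilon^2)^{q/2}\,dx\,ds = \|u_{0,\varepsilon}\|_1,
\end{equation*}
the divergence contribution vanishing. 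This is essentially the content of \eqref{decaymass} below, which yields \eqref{gaston} after letting $\varepsilon \to 0$.

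The second step is to pass to the limit $\varepsilon \to 0$ in the above identity. The convergence of $\|u_\varepsilon(t)\|_1$ to $\|u(t)\|_1$ and $\|u_{0,\varepsilon}\|_1$ to $\|u_0\|_1$ comes from the $L^1$ convergence ensured by the construction. The delicate point is the absorption integral, and here the hypothesis $q \ge p/2$ with $p > p_c$ is decisive: by Proposition~\ref{pr:pos} the limit $u$ is positive throughout $Q_\infty$, and the gradient estimates of Theorem~\ref{th:grad1}(i)-(ii) (applied uniformly in $\varepsilon$ via the version formulated in Remark~\ref{rem:1}) provide the bound $|\nabla u_\varepsilon(t,x)| \le C(t)\, u_\varepsilon(t,x)^{2/p}$, whence
\begin{equation*}
(|\nabla u_\varepsilon|^2+\varepsilon^2)^{q/2} \le C(t)\, u_\varepsilon^{2q/p} + \varepsilon^q.
\end{equation*}
Since $q \ge p/2$ gives $2q/p \ge 1$, interpolation between the uniform $L^1$ and $L^\infty$ bounds on $u_\varepsilon$ produces an integrable majorant on $(\tau,t)\times\RR^N$ for every $\tau > 0$, and a.e.\ convergence of $\nabla u_\varepsilon$ to $\nabla u$ on the positivity set (extracted from the uniform gradient estimates and pointwise convergence of $u_\varepsilon$) allows dominated convergence to pass through the absorption integral.

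The main obstacle is precisely this uniform integrability of the absorption term. The threshold $q = p/2$ is precisely the exponent beyond which the gradient estimate $|\nabla u_\varepsilon|\le C u_\varepsilon^{2/p}$ turns into an $L^1$-bound on $|\nabla u_\varepsilon|^q$ via interpolation with the $L^\infty$-bound, which explains why the statement is restricted to this regime. A secondary technical point is handling the singular behaviour of the gradient estimates as $t \to 0$: I would establish the identity first on $(\tau,t)$ and then pass $\tau \to 0$ using continuity of $u_\varepsilon$ and $u$ in $L^1$ at the initial time.
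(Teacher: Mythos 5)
Your overall plan---derive an exact balance at the regularized level and pass $\varepsilon\to 0$---runs into a basic obstacle immediately: in the paper's approximation scheme the regularized initial datum is $u_{0\varepsilon}+\varepsilon^\gamma$, and by \eqref{approx.est} one has $u_\varepsilon(t,x)\ge\varepsilon^\gamma$ for all $(t,x)\in Q_\infty$. Thus $u_\varepsilon(t)\notin L^1(\RR^N)$ and the identity you write for $u_\varepsilon$ is meaningless as stated. One could try to argue with $u_\varepsilon-\varepsilon^\gamma$ instead, but then the crux of the matter---showing that the divergence term produces no boundary contribution, i.e.\ that no mass leaks to spatial infinity as $\varepsilon\to 0$---is exactly what your argument does not address. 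You locate the ``main obstacle'' in the uniform integrability of the absorption term; in fact the restriction $p>p_c$ is not needed there at all (for $q\ge p/2$ the absorption is automatically in $L^1$ once one has the gradient estimate), and it is the flux term that carries the whole difficulty. There is also a circularity: you invoke Proposition~\ref{pr:pos} for positivity, but that proposition rests on Proposition~\ref{nonextinction}, which rests on the very identity you are trying to prove.

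The paper proceeds differently: it tests the weak formulation directly with the cutoff $\vartheta_R^{p/(2-p)}$, writes
\begin{equation*}
I_R(t)=\int\vartheta_R^{p/(2-p)}u_0\,dx - \int_0^t\int\nabla\bigl(\vartheta_R^{p/(2-p)}\bigr)\cdot |\nabla u|^{p-2}\nabla u\,dx\,ds,
\end{equation*}
and bounds the flux contribution by combining $|\nabla u|=\tfrac{p}{2-p}u^{2/p}\bigl|\nabla u^{(p-2)/p}\bigr|$ with the gradient estimates \eqref{grad.est1}--\eqref{grad.est2} and H\"older's inequality; this produces a factor $R^{-(N+1)(p-p_c)/p}$, and the sign of the exponent is where $p>p_c$ enters. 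A Gronwall/Fatou argument then shows the flux vanishes and the absorption is integrable in the limit $R\to\infty$. So the gradient estimate with $q\ge p/2$ is indeed the decisive ingredient, but it is applied to the flux term, not the absorption term. Your diagnosis of the role of $q\ge p/2$ and $p>p_c$ should be reoriented accordingly before the argument can be made rigorous.
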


\begin{remark}\label{re:cm} Let us point out here that this
result is not obvious as it is clearly false for the singular
diffusion equation \eqref{eq:diffusion} for $p<p_c$ for which we
have extinction in finite time. Therefore, it may only hold true for
$p\ge p_c$ and we refer to \cite[Theorem~2]{HV81} for a proof for
\eqref{eq:diffusion}. The proof of Proposition~\ref{le:u1} given
below for $p>p_c$ (and $q\ge p/2$) is however of a completely
different nature, relying on the gradient estimates
\eqref{grad.est1} and \eqref{grad.est2}, and provides an alternative
proof of the mass conservation for \eqref{eq:diffusion} for $p>p_c$.
The case $p=p_c$ will be considered in the next proposition, the
proof relying on arguments from \cite{HV81}.
\end{remark}

\begin{proof}
Let $\vartheta$ be a non-negative and smooth compactly supported
function in $\real^N$ such that $0\le\vartheta\le 1$,
$\vartheta(x)=1$ for $x\in B_1(0)$ and $\vartheta(x)=0$ for
$x\in\real^N\setminus B_2(0)$. For $R>1$ and $x\in\real^N$, we
define $\vartheta_R(x):=\vartheta(x/R)$. Since $p/(2-p)>1$, the
function $\vartheta_R^{p/(2-p)}$ is a non-negative compactly
supported $C^1$-smooth function and it follows from \eqref{weakform}
that, for $t>0$,
\begin{equation}\label{u2}
\begin{split}
I_R(t) & :=\int\vartheta_R^{p/(2-p)}\ u(t)\ dx + \int_0^t \int \vartheta_R^{p/(2-p)}\ |\nabla u(s)|^q\ dxds \\
&=\int\vartheta_R^{p/(2-p)}\ u_0\ dx - \int_0^t \int \nabla\left(
\vartheta_R^{p/(2-p)} \right)\cdot \left( |\nabla u|^{p-2}\ \nabla u
\right)(s)\ dxds\,.
\end{split}
\end{equation}
On the one hand, since $u_0\in L^1(\RR^N)$ and
$\vartheta_R^{p/(2-p)}\longrightarrow 1$ as $R\to\infty$ with
$\left| \vartheta_R^{p/(2-p)} \right| \le 1$, the Lebesgue dominated
convergence theorem guarantees that \beqn \lim_{R\to\infty} \int
\vartheta_R(x)^{p/(2-p)}\ u_0(x)\ dx = \|u_0\|_1\,. \label{u3} \eeqn
On the other hand, since $p>p_c$ and $q\ge p/2$, $u$ satisfies the
gradient estimate
$$
\left| \nabla u^{(p-2)/p}(s,x) \right|\le C(u_0)\ \left( 1+s^{-1/p}
\right)\,, \qquad (s,x)\in Q_\infty\,,
$$
by \eqref{grad.est1} and \eqref{grad.est2}. Since $\left| \nabla
u \right| = (p/(2-p))\ u^{2/p}\ \left| \nabla
u^{(p-2)/p} \right|$ and $2(p-1)<p$, we infer from the previous
gradient estimate and H\"older's and Young's inequalities that \bear
& & \left| \int_0^t \int \nabla\left( \vartheta_R^{p/(2-p)} \right)(x)\cdot \left( |\nabla u|^{p-2}\ \nabla u \right)(s,x)\ dxds \right| \nonumber\\
& \le & C\ \int_0^t \int \vartheta_R(x)^{2(p-1)/(2-p)}\ |\nabla\vartheta_R(x)|\ u(s,x)^{2(p-1)/p}\ \left| \nabla u^{(p-2)/p}(s,x) \right|^{p-1}\ dxds \nonumber\\
& \le & C(u_0)\ \int_0^t \left( 1+s^{-(p-1)/p} \right)\ \|\nabla\vartheta_R\|_{p/(2-p)}\ \left( \int \vartheta_R(x)^{p/(2-p)}\ u(s,x)\ dx\right)^{2(p-1)/p}\ ds \nonumber\\
& \le & C(u_0,\vartheta)\ R^{-(N+1)(p-p_c)/p}\ \int_0^t \left( 1+s^{-(p-1)/p} \right)\ I_R(s)^{2(p-1)/p}\ ds \nonumber\\
& \le & C(u_0,\vartheta)\ R^{-(N+1)(p-p_c)/p}\ \int_0^t \left(
1+s^{-(p-1)/p} \right)\ (1+I_R(s))\ ds \,. \label{u4} \eear It now
follows from \eqref{u2}, \eqref{u4}, and Gronwall's lemma that \beqn
I_R(t) \le \left( 1+ \|u_0\|_1 \right)\ \exp{\left\{
C(u_0,\vartheta)\ R^{-(N+1)(p-p_c)/p}\ (t+t^{1/p}) \right\}} - 1\,,
\qquad t\ge 0\,. \label{u5} \eeqn Since
$\vartheta_R^{p/(2-p)}\longrightarrow 1$ as $R\to\infty$ and the
right-hand side of \eqref{u5} is bounded independently of $R>1$, we
deduce from \eqref{u5} and Fatou's lemma that $u(t)\in L^1(\RR^N)$
and $|\nabla u|^q\in L^1((0,t)\times\RR^N)$ for every $t>0$. We are
then in a position to apply once more the Lebesgue dominated
convergence theorem to conclude that \beqn \lim_{R\to\infty} I_R(t)
= \|u(t)\|_1 + \int_0^t \int |\nabla u(s,x)|^q\ dxds\,, \qquad
t>0\,, \label{u6} \eeqn while \eqref{u4}, \eqref{u5}, and the
assumption $p>p_c$ ensure that \beqn \lim_{R\to\infty} \int_0^t \int
\nabla\left( \vartheta_R^{p/(2-p)} \right)(x)\cdot \left( |\nabla
u|^{p-2}\ \nabla u \right)(s,x)\ dxds = 0\,, \qquad t>0\,.
\label{u7} \eeqn We may then pass to the limit as $R\to\infty$ in
\eqref{u2} and use \eqref{u3}, \eqref{u6}, and \eqref{u7} to obtain
\eqref{u1}.
\end{proof}

We complete now the panorama with the corresponding result for
$p=p_c>1$, which requires $N\ge 2$.

\begin{proposition}\label{le:u1b}
If $p=p_c$, $q>0$, and $u_0$ satisfies \eqref{assump} along with
\beqn
u_0(x) \le C_0\ |x|^{-N}\,, \quad x\in\RR^N\,,
\label{u0b}
\eeqn
for some $C_0>0$, then
\begin{equation}\label{u1b}
\|u(t)\|_1 + \int_0^{t}\int|\nabla u(s,x)|^q\ dxds = \|u_0\|_1\,,
\qquad t\ge0.
\end{equation}
\end{proposition}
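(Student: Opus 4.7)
The plan is to adapt the test-function argument from the proof of Proposition~\ref{le:u1} to the critical exponent $p=p_c$, where the standard scaling of the annular boundary term degenerates. The additional assumption \eqref{u0b} is precisely what enables one to close the argument, via the propagation of the tail decay from $u_0$ to the solution $u$.

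The first step is to show that $u$ inherits the tail decay of $u_0$: there exists $C_\star>0$ depending only on $N$, $C_0$, and $\|u_0\|_\infty$ such that
\[
u(t,x) \le C_\star\,(1+|x|)^{-N},\qquad (t,x)\in Q_\infty.
\]
The crucial ingredient is the identity $\Delta_{p_c}\bigl(|x|^{-N}\bigr)=0$ in $\mathbb{R}^N\setminus\{0\}$, which follows from a direct computation exploiting $p_c=2N/(N+1)$: indeed, $\nabla(|x|^{-N})=-N\,x|x|^{-N-2}$, and $|\nabla(|x|^{-N})|^{p_c-2}\nabla(|x|^{-N})=-N^{p_c-1}\,x/|x|^N$, a divergence-free vector field away from the origin. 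Consequently $V_\varepsilon(x):=M(\varepsilon^2+|x|^2)^{-N/2}$ is a smooth radial supersolution to \eqref{eq1} in $Q_\infty$ for every $M,\varepsilon>0$ (as the computation of the radial $p_c$-Laplacian of $V_\varepsilon$ confirms), and one selects $M$ and $\varepsilon$ so that $V_\varepsilon\ge u_0$ everywhere; the comparison principle then yields the tail estimate.

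The second step is the cutoff argument. For the standard cutoff $\vartheta_R$, the weak formulation of \eqref{eq1} gives
\[
\int \vartheta_R\,u(t)\,dx + \int_0^t\!\!\int \vartheta_R\,|\nabla u|^q\,dxds = \int \vartheta_R\,u_0\,dx - J_R(t),
\]
with $J_R(t):=\int_0^t\!\!\int \nabla\vartheta_R\cdot|\nabla u|^{p_c-2}\nabla u\,dxds$. Dominated and monotone convergence handle the first three terms, using $u_0\in L^1(\mathbb{R}^N)$ together with the inequality $u(t)\le \Phi(t)$ (where $\Phi$ solves \eqref{eq:diffusion} with the same initial datum and satisfies $\|\Phi(t)\|_1=\|u_0\|_1$ by \cite[Theorem~2]{HV81}) to guarantee $u(t)\in L^1(\mathbb{R}^N)$.

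The hard part is to show that $J_R(t)\to 0$ as $R\to\infty$. Using the gradient estimate \eqref{grad.est6} in its regularized form from Remark~\ref{rem:3} (with the regularization parameter $\delta=\delta_R$ suitably chosen depending on $R$), combined with the tail estimate from the first step, one obtains a pointwise upper bound on $|\nabla u(s,x)|^{p_c-1}$ on the support of $\nabla\vartheta_R$. Since the spatial exponents balance exactly at $p=p_c$, a naive linear choice of cutoff produces an annular integral of order $(\log R)^{\alpha}$ rather than a vanishing one; following the strategy of \cite{HV81}, one uses a logarithmically scaled cutoff (for instance $\vartheta_R(x)=\eta(\log(2+|x|)/\log R)$) and optimizes $\delta_R$ so as to extract the missing decay from the tail bound. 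This delicate balancing of cutoff scale and regularization parameter is the main obstacle; once $J_R(t)\to 0$ is established, passage to the limit $R\to\infty$ in the identity above yields \eqref{u1b}.
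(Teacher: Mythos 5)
Your first step (propagating the tail decay via the $p_c$-harmonic function $|x|^{-N}$ and the comparison principle) is essentially the paper's first step and is sound. The gap is in your strategy for showing $J_R(t)\to 0$. You propose to use the pointwise gradient estimate~\eqref{grad.est6} on the support of $\nabla\vartheta_R$. But at $p=p_c$ this estimate carries a logarithmic correction, and on the tail $u\sim|x|^{-N}$ the log factor becomes $\sim N\log|x|$. Pushed through, the pointwise bound gives $|\nabla u|^{p_c-1}\lesssim |x|^{-(N-1)}(\log|x|)^{(p_c-1)/p_c}$, and hence
$\int_0^t\int_{\{|x|\ge R\}}|\nabla u|^{p_c}\,dxds\lesssim R^{-N}\log R$.
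A quick computation shows that the logarithmically scaled cutoff $\vartheta_R=\eta(\log(2+|x|)/\log R)$ does \emph{not} remove this: with that choice $\int |\nabla\vartheta_R|\,|\nabla u|^{p_c-1}\,dx\sim (\log R)^{(p_c-1)/p_c}$, which still diverges. Tuning the regularization $\delta_R$ does not help either, because on $\{|x|\sim R\}$ the quantity $(u+\delta_R)^{(N+1)/N}\log(M/(u+\delta_R))^{1/p_c}$ is minimized, up to constants, at $\delta_R\sim R^{-N}$, where the same $\log R$ reappears.

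The paper avoids this logarithmic loss altogether by not invoking the pointwise gradient estimate at all in this step. Instead it multiplies \eqref{eq1} by the test function $(1-\vartheta_R)^{p_c}u$ and integrates in space and time: after dropping the sign-definite absorption contribution and applying Young's inequality, this yields
$$
(2-p_c)\int_0^t\int(1-\vartheta_R)^{p_c}|\nabla u|^{p_c}\,dxds
\le \frac12\int(1-\vartheta_R)^{p_c}u_0^2\,dx+\int_0^t\int|\nabla\vartheta_R|^{p_c}u^{p_c}\,dxds.
$$
Using the tail bound $u\le C_0|x|^{-N}$ to replace one factor of $u$ (so $u_0^2\le C_0|x|^{-N}u_0$, $u^{p_c}\le (C_0|x|^{-N})^{p_c-1}u$), both right-hand terms are bounded by $C R^{-N}\omega(t,R)$, where $\omega(t,R)$ is a tail mass $\int_{\{|x|\ge R\}}u_0+\int_0^t\int_{\{|x|\ge R\}}u$ that tends to $0$ as $R\to\infty$ by dominated convergence. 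Thus one obtains $\int_0^t\int_{\{|x|\ge 2R\}}|\nabla u|^{p_c}\,dxds\le CR^{-N}\omega(t,R)$ with a vanishing prefactor $\omega$, \emph{with no logarithm}. A standard (not logarithmic) cutoff combined with H\"older's inequality then gives $|J_R(t)|\le Ct^{1/p_c}\omega(t,R/2)^{(N-1)/2N}\to 0$ because the powers of $R$ cancel exactly at $p=p_c$. In short: the pointwise estimate \eqref{grad.est6} is a dead end for this step; you need the localized $L^2$ energy identity to produce a tail bound that goes to zero rather than merely barely failing to diverge.
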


\begin{proof}
A straightforward computation shows that $(t,x)\mapsto C_0\
|x|^{-N}$ is a supersolution to \eqref{eq1} in
$(0,\infty)\times\RR^N\setminus\{0\}$ and we infer from \eqref{u0b}
and the comparison principle that \beqn u(t,x) \le C_0\ |x|^{-N}\,,
\quad (t,x)\in [0,\infty)\times\RR^N\setminus\{0\}\,. \label{u2b}
\eeqn Next, let $\vartheta$ be a non-negative and smooth compactly
supported function in $\real^N$ such that $0\le\vartheta\le 1$,
$\vartheta(x)=1$ for $x\in B_1(0)$, and $\vartheta(x)=0$ for
$x\in\real^N\setminus B_2(0)$. For $R>1$ and $x\in\real^N$, we
define $\vartheta_R(x):=\vartheta(x/R)$. We multiply \eqref{eq1} by
$(1-\vartheta_R)^{p_c}\ u$, integrate over $\RR^N$, and use Young's
inequality to obtain
\begin{equation*}
\begin{split}
\frac{1}{2}\ & \frac{d}{dt} \int (1-\vartheta_R)^{p_c}\ u^2\ dx \le - \int \nabla\left( (1-\vartheta_R)^{p_c}\ u \right)\cdot |\nabla u|^{p_c-2}\ \nabla u\ dx \\
& \le - \int (1-\vartheta_R)^{p_c}\ |\nabla u|^{p_c}\ dx + {p_c}\ \int |\nabla\vartheta_R|\ \left( (1-\vartheta_R)\ |\nabla u| \right)^{p_c-1}\ u\ dx \\
& \le -(2-p_c)\ \int (1-\vartheta_R)^{p_c}\ |\nabla u|^{p_c}\ dx  + \int |\nabla\vartheta_R|^{p_c}\ u^{p_c}\ dx\,.
\end{split}
\end{equation*}
Integrating with respect to time over $(0,t)$ and using the properties of $\vartheta_R$, \eqref{u0b}, and \eqref{u2b} give
\begin{equation*}
\begin{split}
(2-p_c)\ & \int_0^t \int_{\{|x|\ge 2R\}} |\nabla u|^{p_c}\ dxds \le (2-p_c)\ \int_0^t \int (1-\vartheta_R)^{p_c}\ |\nabla u|^{p_c}\ dxds \\
& \le  \frac{1}{2}\ \int (1-\vartheta_R)^{p_c}\ u_0^2\ dx + \frac{1}{R^{p_c}}\ \int_0^t \int \left| \nabla\vartheta\left( \frac{x}{R} \right) \right|^{p_c}\ u^{p_c}\ dxds \\
& \le \frac{C_0}{2}\ \int_{\{|x|\ge R\}} \frac{u_0(x)}{|x|^N}\ dx + \frac{C_0^{p_c-1} \|\nabla\vartheta\|_\infty^{p_c}}{R^{p_c}}\ \int_0^t \int_{\{|x|\ge R\}} \frac{u(s,x)}{|x|^{N(p_c-1)}}\ dxds \\
& \le \frac{C_0}{2R^N}\ \int_{\{|x|\ge R\}} u_0(x)\ dx +
\frac{C_0^{p_c-1} \|\nabla\vartheta\|_\infty^{p_c}}{R^N}\ \int_0^t
\int_{\{|x|\ge R\}} u(s,x)\ dxds\,,
\end{split}
\end{equation*}
whence
\beqn
\int_0^t \int_{\{|x|\ge 2R\}} |\nabla u|^{p_c}\ dxds \le \frac{C(\vartheta,u_0)}{R^N}\ \omega(t,R)\,,
\label{u3b}
\eeqn
with
$$
\omega(t,R) := \int_{\{|x|\ge R\}} u_0(x)\ dx + \int_0^t \int_{\{|x|\ge R\}} u(s,x)\ dxds\,.
$$
Now, owing to \eqref{u3b} and H\"older's inequality, we have
\bean
& & \left| \int_0^t \int \nabla\left( \vartheta_R^N \right) \cdot |\nabla u|^{p_c-2}\nabla u\ dxds \right| \\
& \le & N\ \left( \int_0^t \int_{\{|x|\ge R\}} |\nabla u|^{p_c}\ dxds  \right)^{(p_c-1)/p_c}\ \left( \int_0^t \int |\nabla\vartheta_R|^{p_c}\ dxds \right)^{1/p_c} \\
& \le & N\ \left[ \frac{2^N C(\vartheta,u_0)}{R^N}\ \omega\left( t , \frac{R}{2} \right) \right]^{(N-1)/2N}\ \|\nabla\vartheta\|_{p_c}\ t^{1/p_c}\ R^{(N-p_c)/p_c} \\
& \le & C(\vartheta,u_0)\ t^{1/p_c}\ \omega\left( t , \frac{R}{2}
\right)^{(N-1)/2N}\,. \eean Since $u\in L^\infty(0,t;L^1(\RR^N))$ by
\eqref{decaymass} and $u_0\in L^1(\RR^N)$, it readily follows from
the Lebesgue dominated convergence theorem that $\omega(t,R/2)\to 0$
as $R\to\infty$. We have thus proved that \eqref{u7} also holds true
for $p=p_c$ (since $p_c/(2-p_c)=N$) and we can proceed as in the end
of the proof of Proposition~\ref{le:u1} to complete the proof.
\end{proof}

We prove now a first result concerning non-extinction in finite time
in the range $q>p/2$. Apart from the interest by itself, this result
is also a technical step in the proof of the next estimates.

\begin{proposition}\label{nonextinction}
Let $p\ge p_c$, $q\in(p/2,\infty)$, and an initial condition $u_0$
satisfying \eqref{assump} as well as \eqref{u0b} if $p=p_c$. Then
the solution of \eqref{eq1}-\eqref{a2} cannot vanish in finite time.
\end{proposition}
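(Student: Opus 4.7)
I would argue by contradiction, assuming that the solution extinguishes at some finite time $T_{\text{e}} > 0$. The strategy combines the gradient estimate of Theorem~\ref{th:grad1} (in the stronger global form of Remark~\ref{rem:1}) with the mass conservation identity of Proposition~\ref{le:u1}. I focus on the case $p > p_c$; the case $p = p_c$ is handled analogously, using Theorem~\ref{th:grad2}, Remark~\ref{rem:3} and Proposition~\ref{le:u1b}, with logarithmic corrections carried along the computation.

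Since $(u+\delta)^{-(2-p)/p}(t,\cdot)$ is globally Lipschitz in $x$ with a constant $M(t)$ independent of $\delta > 0$ and bounded by $C\,(1+t^{-1/p})$ (the constant depending on $\|u_0\|_\infty$ when $q \in (p/2,1)$), I would fix $x_0$ with $u(t,x_0) > 0$, apply the Lipschitz inequality between $x$ and $x_0$, and let $\delta \to 0$ to obtain
\[u(t,x) \ge \bigl[u(t,x_0)^{-(2-p)/p} + M(t)\, |x - x_0|\bigr]^{-p/(2-p)}.\]
The condition $p > p_c$ is precisely $p/(2-p) > N$, which makes this lower bound integrable over $\RR^N$. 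Integrating and then taking the supremum over $x_0$ in the positivity set yields
\[\|u(t)\|_1 \ge C\, M(t)^{-N}\, \|u(t)\|_\infty^{1/\eta}, \qquad \frac{1}{\eta} = p(N+1) - 2N.\]

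The same gradient estimate also gives $|\nabla u| \le C\, u^{2/p}\, M(t)$, and since $2q/p > 1$ (because $q > p/2$) one deduces $|\nabla u|^q \le C\, M(t)^q\, \|u(t)\|_\infty^{(2q-p)/p}\, u$. Integrating in $x$ and inserting the previous lower bound on $\|u(t)\|_1$ to eliminate $\|u(t)\|_\infty$ gives
\[\int |\nabla u(t,x)|^q\, dx \le C\, M(t)^\beta\, \|u(t)\|_1^\alpha,\]
with $\alpha = 1 + (2q-p)\eta$ and $\beta = p[q(N+1)-N]/[p(N+1)-2N] > 0$. The crucial point is that $\alpha > 1$ is equivalent to $q > p/2$.

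By the mass conservation identity of Proposition~\ref{le:u1}, $f(t) := \|u(t)\|_1$ satisfies $-f'(t) \le C\, M(t)^\beta\, f(t)^\alpha$ for a.e.\ $t > 0$. Since $f$ is absolutely continuous with $f(0) = \|u_0\|_1 > 0$, there exists $t_0 > 0$ with $f(t_0) > 0$. Separating variables and integrating from $t_0$ to $t$ then yields
\[f(t)^{1-\alpha} \le f(t_0)^{1-\alpha} + C\, (\alpha - 1) \int_{t_0}^t M(s)^\beta\, ds,\]
and the right-hand side is finite on every bounded time interval. As $1 - \alpha < 0$, this forces $f(t) > 0$ for every finite $t$, contradicting extinction at $T_{\text{e}}$. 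The main technical subtlety is the rigorous justification of the pointwise lower bound through the limit $\delta \to 0$ (handling the fact that $\|u(t)\|_\infty$ may not be attained), and transcribing the computation to $p = p_c$ where the logarithmic factors from Theorem~\ref{th:grad2} modify --- without spoiling --- the exponents appearing in the inequalities above.
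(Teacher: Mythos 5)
Your proposal is correct in substance and takes a genuinely different route from the paper. The paper's own proof of Proposition~\ref{nonextinction} is an entropy/interpolation argument borrowed from \cite[Lemma~4.1]{ATU04}: it introduces the functional $E_\theta(t)=\int u^{1+\theta}dx$ with $\theta=(2Q-p)/Q$ (the condition $\theta>0$ being precisely $Q>p/2$), compares $d\|u\|_1/dt$ and $dE_\theta/dt$ via H\"older and Young, integrates up to the putative extinction time, and derives a contradiction from $E_\theta(t)/\|u(t)\|_1\le\|u(t)\|_\infty^\theta\to0$. It only uses the crude Lipschitz bound \eqref{prunelle}, not the refined gradient estimates of Theorem~\ref{th:grad1}. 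Your argument instead exploits those refined gradient estimates directly: from $\left|\nabla (u+\delta)^{-(2-p)/p}\right|\le M(t)$ you infer $|\nabla u|\le C\,u^{2/p}M(t)$, and since $2q/p>1$ this yields $\int|\nabla u|^q\,dx\le C M(t)^q\|u(t)\|_\infty^{(2q-p)/p}\|u(t)\|_1$; inserted into the identity $-\tfrac{d}{dt}\|u(t)\|_1=\int|\nabla u|^q\,dx$ of Proposition~\ref{le:u1}/\ref{le:u1b}, this gives a Gronwall-type differential inequality that rules out extinction. In fact, this is the method the paper itself uses for the exponential lower bound in Proposition~\ref{pr:v3} (case $q=p/2$); your argument amounts to extending that method to all $q>p/2$.

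Two remarks on the details. First, there is a small exponent slip: integrating the pointwise lower bound $u(t,x)\ge\bigl[u(t,x_0)^{-(2-p)/p}+M(t)|x-x_0|\bigr]^{-p/(2-p)}$ over $\RR^N$ (finite precisely because $p/(2-p)>N$, i.e.\ $p>p_c$) gives
$\|u(t)\|_1\ge C\,M(t)^{-N}\,\|u(t)\|_\infty^{1/(p\eta)}$, with the exponent $1/(p\eta)=(p(N+1)-2N)/p$, not $1/\eta$. The subsequent values of $\alpha$ and $\beta$ you state are consistent with the corrected exponent, so this is just a transcription error. Second, and more usefully, steps~3--4 of your proposal are not needed for non-extinction: the cruder bound $u^{(2q-p)/p}\le\|u_0\|_\infty^{(2q-p)/p}$ already yields $-f'(t)\le C(u_0)\,M(t)^q\,f(t)$, a \emph{linear} differential inequality whose solution is positive for all finite $t$. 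The extra work of deriving $\alpha>1$ buys a stronger algebraic upper bound on $\|u(t)\|_1$, but it is not required to exclude finite-time extinction. In the borderline case $p=p_c$ your sketch must replace $-(2-p)/p=-1/N$ by a slightly smaller exponent $-\theta$ with $0<\theta<1/N$ and $\theta>(1-q)/q$ (possible since $q>p_c/2$ is equivalent to $(1-q)/q<1/N$), using the boundedness of $r\mapsto r^{(1-N\theta)/N}|\log r|^{1/p_c}$ as in the paper's Corollary~\ref{cor:positivity}; this is exactly the care you flagged, and it does go through.
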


\begin{proof}
We borrow some ideas from \cite[Lemma 4.1]{ATU04}. Assume for
contradiction that there exists $T\in (0,\infty)$ such that
$u(T)\equiv0$ and $\|u(t)\|_{1}>0$ for any $t\in[0,T)$. For
$\theta\in(0,1)$ to be specified later, define
\begin{equation}\label{energ.teta}
E_{\theta}(t)=\int u(t,x)^{1+\theta}\,dx , \quad t\ge 0.
\end{equation}
Let $\lambda>0$ (to be chosen later) and $Q\in(p/2,p)$ such that
$Q\leq q$. We use Proposition~\ref{le:u1} for $p>p_c$ or
Proposition~\ref{le:u1b} for $p=p_c$, \eqref{prunelle}, and
H\"{o}lder's inequality to get
\begin{equation*}
\begin{split}
\frac{d}{dt}\|u(t)\|_{1}&=-\int|\nabla u|^{q}\,dx\geq-\|\nabla
u_0\|_{\infty}^{q-Q}\int|\nabla u|^{Q} u^{-\lambda}
u^{\lambda}\,dx\\&\geq-C(u_0)\ \left(\int|\nabla
u|^{p}u^{-p\lambda/Q}\,dx\right)^{Q/p}\left(\int
u^{p\lambda/(p-Q)}\,dx\right)^{(p-Q)/p}.
\end{split}
\end{equation*}
We now choose $\lambda$ in order to find the derivative of
$E_{\theta}$ in the first factor in the right-hand side of the above
inequality. More specifically, by differentiating in
\eqref{energ.teta} and using \eqref{eq1}, we find
\begin{equation*}
\begin{split}
\frac{d}{dt}E_{\theta}(t)&=(1+\theta)\int
u(t,x)^{\theta}(\Delta_{p}u(t,x)-|\nabla u(t,x)|^q)\,dx\\
&\leq-\theta(1+\theta)\int u(t,x)^{\theta-1}|\nabla u(t,x)|^p\,dx,
\end{split}
\end{equation*}
hence, we choose $\lambda$ such that $p\lambda/Q=1-\theta>0$. The
inequality thus becomes
\begin{equation}
\frac{d}{dt}\|u(t)\|_1\geq-C(u_0,\theta)
\left(-\frac{d}{dt}E_{\theta}(t)\right)^{Q/p}\left(\int
u(t,x)^{Q(1-\theta)/(p-Q)}\,dx\right)^{(p-Q)/p}.
\end{equation}
We choose $\theta$ such that $Q(1-\theta)/(p-Q)=1$, that is
$\theta=(2Q-p)/Q\in(0,1)$. Using Young's inequality, we arrive to
the differential inequality
\begin{equation*}
\frac{d}{dt}\|u(t)\|_1\geq-C(u_0,\theta)
\left(-\frac{d}{dt}E_{\theta}(t)\right)^{Q/p}\|u(t)\|_{1}^{(p-Q)/p}
\geq\e\frac{d}{dt}E_{\theta}(t)-C(u_0,\theta,\e)\|u(t)\|_1,
\end{equation*}
for $\e>0$; we integrate it on $(t,T)$ and use the time
monotonicity \eqref{decaymass} of $\|u\|_1$ to get
\begin{equation*}
-\|u(t)\|_{1}+C(u_0,\theta,\e)(T-t)\|u(t)\|_1\geq-\|u(t)\|_1+C(u_0,\theta,\e)\int_{t}^{T}\|u(s)\|_1\,ds\geq-\e
E_{\theta}(t)\,,
\end{equation*}
whence
\beqn\label{PPP}
\liminf\limits_{t\to
T}\frac{E_{\theta}(t)}{\|u(t)\|_{1}}\geq\frac{1}{\e}\,.
\eeqn
But on the other hand, we notice that
\begin{equation*}
\frac{E_{\theta}(t)}{\|u(t)\|_{1}}\leq \|u(t)\|_{\infty}^{\theta}\to
0 \quad \hbox{as} \ t\to T,
\end{equation*}
which is a contradiction with \eqref{PPP}. Thus, there cannot be a
finite extinction time $T>0$.
\end{proof}
As a consequence of this non-extinction result, we are able to prove
that, for $p>p_c$ and $q>p/2$, the positivity set is the whole set
$Q_\infty$.

\begin{corollary}\label{cor:positivity}
If $p\ge p_c$, $q>p/2$, and $u_0$ satisfies \eqref{assump} as well
as \eqref{u0b} if $p=p_c$, then the solution of
\eqref{eq1}-\eqref{a2} is such that $u(t,x)>0$ for $(t,x)\in
Q_\infty$.
\end{corollary}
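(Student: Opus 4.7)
The plan is to argue by contradiction. Proposition~\ref{nonextinction} already guarantees that $\|u(t)\|_1>0$ for every $t\ge 0$ (no extinction in finite time), so if I suppose $u(t_0,x_0)=0$ at some $(t_0,x_0)\in Q_\infty$, there must exist $y_0\in\RR^N$ with $u(t_0,y_0)>0$. The idea is to leverage the fact that the strengthened gradient estimates of Remark~\ref{rem:1} (for $p>p_c$) and Remark~\ref{rem:3} (for $p=p_c$) hold on all of $Q_\infty$ with a right-hand side that does \emph{not} depend on the regularization parameter $\delta$. Because the negative-power transformations $(u+\delta)^{-\alpha}$ blow up at $x_0$ but stay finite at $y_0$ as $\delta\to 0^+$, a uniform Lipschitz bound on these transformations will force a contradiction when evaluated along the segment from $y_0$ to $x_0$.

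For $p\in(p_c,2)$ and $q\ge p/2$, Remark~\ref{rem:1} (using \eqref{grad.est1}--\eqref{grad.est2}) supplies, for every $\delta>0$,
\begin{equation*}
\bigl|\nabla(u+\delta)^{-(2-p)/p}(t,x)\bigr|\le K(t_0), \qquad (t,x)\in[t_0,\infty)\times\RR^N,
\end{equation*}
with $K(t_0)$ independent of $\delta$. Integrating along the segment $[y_0,x_0]$ at time $t_0$ gives
\begin{equation*}
\bigl(u(t_0,x_0)+\delta\bigr)^{-(2-p)/p}\le \bigl(u(t_0,y_0)+\delta\bigr)^{-(2-p)/p}+K(t_0)\,|x_0-y_0|.
\end{equation*}
Since $(2-p)/p>0$, the left-hand side equals $\delta^{-(2-p)/p}\to\infty$ as $\delta\to 0^+$, whereas the right-hand side converges to the finite quantity $u(t_0,y_0)^{-(2-p)/p}+K(t_0)|x_0-y_0|$. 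This contradiction rules out $u(t_0,x_0)=0$.

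The case $p=p_c$ (hence $N\ge 2$ and $q>N/(N+1)$) is the real technical point, because the enhanced estimates \eqref{grad.est6}--\eqref{grad.est7} from Remark~\ref{rem:3} still feature a logarithmic factor in $u+\delta$. Setting $h_\delta:=\log\bigl(e\|u_0\|_\infty/(u+\delta)\bigr)\ge 0$ and using the chain-rule identity $\nabla(u+\delta)^{-1/N}=\tfrac{1}{N}(u+\delta)^{-1/N}\nabla h_\delta$ together with $u+\delta\le\|u_0\|_\infty+1$ converts the gradient estimate into
\begin{equation*}
|\nabla h_\delta(t,x)|\le C(t_0)\,h_\delta(t,x)^{1/p_c},\qquad (t,x)\in[t_0,\infty)\times\RR^N,
\end{equation*}
uniformly in $\delta\in(0,1]$. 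I then compose with $H(s):=s^{(p_c-1)/p_c}$, which is increasing and unbounded on $[0,\infty)$ precisely because $p_c>1$; this absorbs the remaining $h_\delta^{1/p_c}$ factor and produces $|\nabla H(h_\delta)|\le C'(t_0)$ independently of $\delta$. Evaluating on the segment $[y_0,x_0]$ and letting $\delta\to 0^+$ yields the same kind of contradiction as before, since $H(h_\delta(t_0,y_0))\to H\!\left(\log(e\|u_0\|_\infty/u(t_0,y_0))\right)<\infty$ while $H(h_\delta(t_0,x_0))\to+\infty$. The main obstacle is precisely this massaging step at $p=p_c$: one must verify that the particular exponent $(p_c-1)/p_c$ coming from $H$ is strictly positive (which is why the restriction $N\ge 2$ is used) and that the $\delta$-uniformity of the resulting Lipschitz constant indeed survives all the manipulations.
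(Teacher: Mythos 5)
Your argument is correct and rests on the same two ingredients as the paper's own proof (Proposition~\ref{nonextinction} to guarantee $\|u(t_0)\|_1>0$, and the $\delta$-uniform estimates of Remarks~\ref{rem:1}/\ref{rem:3} on $Q_\infty$), but the way you extract positivity is genuinely different and more elementary. For $p>p_c$, the paper multiplies the pointwise inequality $(u(t,x_0)+\delta)^{(p-2)/p}\le(u(t,x)+\delta)^{(p-2)/p}+\phi(t)|x-x_0|$ by $(u(t,x)+\delta)^{2/p}$, integrates over a ball $B_r(x_0)$, uses H\"older's inequality, and sends $r\to\infty$ so that the $L^1$-mass appears; you instead pick a single point $y_0$ with $u(t_0,y_0)>0$ and integrate the $\delta$-uniform Lipschitz bound for $(u+\delta)^{-(2-p)/p}$ along the segment $[y_0,x_0]$, obtaining a contradiction as $\delta\to0^+$. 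For $p=p_c$, the paper replaces the exponent $1/N$ by a smaller $\theta\in(0,1/N)$ so the logarithmic factor is absorbed by the extra power $(u+\delta)^{(1-N\theta)/N}$ before invoking boundedness of $r\mapsto r^a|\log r|^{1/p_c}$; you instead rewrite the estimate as $|\nabla h_\delta|\le C\,h_\delta^{1/p_c}$ with $h_\delta=\log\bigl(e\|u_0\|_\infty/(u+\delta)\bigr)$ and compose with $H(s)=s^{(p_c-1)/p_c}$, whose derivative $H'(s)\sim s^{-1/p_c}$ cancels the remaining power of $h_\delta$ and gives a $\delta$-uniform Lipschitz bound on $H(h_\delta)$; the positivity of the exponent $(p_c-1)/p_c$ uses $N\ge2$, which you note. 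Both implementations are valid; yours avoids the integral averaging entirely, at the cost of making the use of the second point $y_0$ explicit. The one step worth spelling out is that the constant in $|\nabla h_\delta|\le C\,h_\delta^{1/p_c}$ absorbs the chain-rule prefactor $(u+\delta)^{1/N}\le(\|u_0\|_\infty+1)^{1/N}$, which you indeed do.
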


\begin{proof}
We first consider the case $p>p_c$. Let $t>0$ and $\delta\in (0,1)$.
We first recall that, since $p>p_c$ and $q>p/2$, we have \beqn
\left| \nabla (u+\delta)^{(p-2)/p}(t,x) \right| \le \phi(t) :=
C(u_0)\ \left( 1 + t^{-1/p} \right)\,, \quad x\in\RR^N\,,
\label{pos1} \eeqn by \eqref{grad.est1} and \eqref{grad.est2},
taking into account Remark~\ref{rem:1} and \eqref{grad.est.strong}.
Fix $x_0\in\RR^N$. For $x\in \RR^N$, we infer from \eqref{pos1} that
$$
(u(t,x_0)+\delta)^{(p-2)/p} \le (u(t,x)+\delta)^{(p-2)/p}
+ \phi(t)\ |x-x_0|\,.
$$
Multiplying the above inequality by $(u(t,x)+\delta)^{2/p}$ and
integrating with respect to $x$ over $B_r(x_0)$ for some $r>0$ to be
determined later give \bean
& & \left( \int_{B_r(x_0)} (u(t,x)+\delta)^{2/p}\ dx \right)\ (u(t,x_0)+\delta)^{(p-2)/p} \\
& \le & \int_{B_r(x_0)} \left[ u(t,x) + \delta + \phi(t)\
|x-x_0|\ (u(t,x)+\delta))^{2/p} \right]\ dx\,. \eean
Noting that
$$
M(r,\delta) := \int_{B_r(x_0)} (u(t,x)+\delta)\ dx \le
\left( \int_{B_r(x_0)} (u(t,x)+\delta)^{2/p}\ dx \right)^{p/2}\
|B_r(x_0)|^{(2-p)/2}
$$
by H\"older's inequality, we obtain
$$
|B_r(x_0)|^{(p-2)/p}\ M(r,\delta)^{2/p}\
(u(t,x_0)+\delta)^{(p-2)/p} \le M(r,\delta)\ \left( 1 + r\
\phi(t)\ \|u(t)+\delta \|_\infty^{(2-p)/p} \right)\,,
$$
$$
|B_r(x_0)|^{(p-2)/p}\ M(r,\delta)^{(2-p)/p} \le
(u(t,x_0)+\delta)^{(2-p)/p}\ \left( 1 + r\ \phi(t)\
\|u(t)+\delta \|_\infty^{(2-p)/p} \right)\,.
$$
Letting $\delta\to 0$, we end up with
$$
|B_r(x_0)|^{-1}\ M(r,0) \le u(t,x_0)\ \left( 1 + r\ \phi(t)\
\|u(t)\|_\infty^{(2-p)/p} \right)^{p/(2-p)}\,.
$$
Since $M(r,0)\to \|u(t)\|_1$ as $r\to\infty$ and $\|u(t)\|_1>0$ by
Proposition~\ref{nonextinction}, we may fix $r_0$ large enough such
that $M(r_0,0)>0$ and deduce from the above inequality with $r=r_0$
that
$$
0<|B_{r_0}(x_0)|^{-1}\ M(r_0,0) \le u(t,x_0)\ \left( 1 + r_0\
\phi(t)\ \|u(t)\|_\infty^{(2-p)/p} \right)\,,
$$
which shows the positivity of $u(t,x_0)$.

\medskip

Next, if $p=p_c$, $q\in (p_c/2,\infty)$, $\delta\in (0,1)$, and
$(t,x)\in Q_\infty$, it follows from \eqref{grad.est6},
\eqref{grad.est7}, and Remark~\ref{rem:3} that
$$
\left| \nabla (u+\delta)^{-1/N}(t,x) \right|\leq C(u_0) \ \left(
\log\left(\frac{e \|u_0\|_\infty}{u(t,x)+\delta} \right)
\right)^{1/p_c} \left( 1+t^{-1/p_c} \right).
$$
Fix $\theta\in (0,1/N)$. Then, owing to the boundedness of the
function $r\mapsto r^{(1-N\theta)/N} |\log{r}|^{1/p_c}$ for $r\in
[0,\|u_0\|_\infty+1]$, we have \bean
\left| \nabla (u+\delta)^{-\theta}(t,x) \right| & = & N\theta\ (u(t,x)+\delta)^{(1-N\theta)/N}\ \left| \nabla (u+\delta)^{-1/N}(t,x) \right| \\
& \le & C(\theta,u_0)\ \left( 1+t^{-1/p_c} \right), \eean for
$(t,x)\in Q_\infty$ and we may proceed as in the previous case to
establish the claimed positivity of $u$ in $Q_\infty$.
\end{proof}

We are now in a position to prove the two main results of this
section.

\begin{proposition}\label{positivity}
Let $u$ be a solution to \eqref{eq1}-\eqref{a2} with an initial
condition $u_0$ satisfying \eqref{assump} as well as \eqref{u0b} if
$p=p_c$. If $p\ge p_c$ and $q>q_{\star}$, then we have
$\lim\limits_{t\to\infty}\|u(t)\|_1>0$.
\end{proposition}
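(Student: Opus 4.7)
Differentiating the mass identity from Proposition~\ref{le:u1} (or Proposition~\ref{le:u1b} if $p=p_c$) gives $-\tfrac{d}{dt}\|u(t)\|_1 = \int |\nabla u|^q\,dx$, so the task reduces to producing $T_0>0$ and a non-negative $\phi\in L^1((T_0,\infty))$ with
\[
\int |\nabla u(t,x)|^q\,dx \leq \phi(t)\, \|u(t)\|_1, \qquad t\ge T_0.
\]
Since $q>q_\star\ge p/2$ whenever $p\ge p_c$, Proposition~\ref{nonextinction} guarantees $\|u(T_0)\|_1>0$ for every finite $T_0$, and Gronwall's lemma then yields $\|u(t)\|_1\ge \|u(T_0)\|_1\,\exp\bigl(-\int_{T_0}^\infty \phi(s)\,ds\bigr)>0$ for $t\ge T_0$, which gives the sought conclusion $\lim_{t\to\infty}\|u(t)\|_1>0$.

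For $p>p_c$ I would combine the gradient estimate $|\nabla u(t,x)|\le C\,u(t,x)^{2/p}\,t^{-1/p}$ (obtained from \eqref{grad.est1} when $q\ge 1$) with the $L^\infty$-decay $\|u(t)\|_\infty\le C\,\|u_0\|_1^{p\eta}\,t^{-N\eta}$ from Theorem~\ref{th:warmup}(i) and the elementary interpolation $\int u^{2q/p}\,dx\le \|u\|_\infty^{(2q-p)/p}\|u\|_1$ (valid since $2q/p\ge 1$). This produces
\[
\int |\nabla u|^q\,dx\le C(u_0)\, t^{-\gamma}\,\|u(t)\|_1, \qquad \gamma:=\frac{q(N+1)-N}{p(N+1)-2N},
\]
and a short calculation confirms $\gamma>1\iff q>q_\star$, so that $\phi(t)=C(u_0)t^{-\gamma}$ fits the bill.

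The main obstacle is the subcase $q\in[p/2,1)$, for which the relevant gradient estimate \eqref{grad.est2} contains an extra non-decaying term $\|u_0\|_\infty^{(2q-p)/p(p-q)}$ that would ruin integrability at infinity. My remedy is the semigroup trick: since \eqref{eq1} is autonomous, I apply \eqref{grad.est2} to the solution restarted at time $t/2$, so the offending constant becomes $\|u(t/2)\|_\infty^{(2q-p)/p(p-q)}$ and, by the $L^\infty$-decay, behaves like $t^{-N\eta(2q-p)/p(p-q)}$. A direct check shows this exponent strictly exceeds $1/p$ precisely when $q>q_\star$, so the $(t/2)^{-1/p}$ contribution dominates for large $t$ and the earlier bound $|\nabla u|\le C\, u^{2/p}\, t^{-1/p}$ is recovered (up to constants) on $[T_0,\infty)$, yielding the same $\phi$ as above.

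For $p=p_c$ (which requires $N\ge 2$, with $q_\star=N/(N+1)$), the scheme is identical after replacing \eqref{grad.est1}--\eqref{grad.est2} by the log-corrected \eqref{grad.est6}--\eqref{grad.est7} and the algebraic $L^\infty$-decay by the exponential decay $\|u(t)\|_\infty\le C\,e^{-ct/\|u_0\|_1^{2/(N+1)}}$ furnished by Proposition~\ref{prop:expdecsd} together with the comparison $u\le\Phi$. The resulting $\phi(t)$ is then exponentially decaying up to polynomial/logarithmic factors arising from the interpolation of $u^{q(N+1)/N}(\log(e\|u_0\|_\infty/u))^{q/p_c}$, hence is certainly integrable at infinity, and the Gronwall step concludes exactly as above.
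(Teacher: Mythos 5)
Your proposal is correct and follows essentially the same route as the paper: both combine the mass identity of Propositions~\ref{le:u1} and~\ref{le:u1b} with the gradient estimates \eqref{grad.est1}, \eqref{grad.est2}, \eqref{grad.est6}, \eqref{grad.est7} and the $L^\infty$ decay from Theorem~\ref{th:warmup} (with the same restarting-at-$t/2$ trick to kill the non-decaying constant when $q\in[p/2,1)$), and then invoke Proposition~\ref{nonextinction} to anchor the lower bound. The only cosmetic difference is that you package the conclusion as a Gronwall inequality for $\|u(t)\|_1$, whereas the paper works directly with the integrated identity to obtain $\|u(t)\|_1\ge\tfrac12\|u(s)\|_1$ for $s$ large.
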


\begin{proof}
From Proposition~\ref{le:u1} (if $p>p_c$) and
Proposition~\ref{le:u1b} (if $p=p_c$), we have, for any $1\leq s\leq
t<\infty$:
\begin{equation}\label{cons}
\|u(s)\|_{1}=\|u(t)\|_{1}+\int_{s}^{t}\int\left(u(\tau,x)^{-1/q}|\nabla
u(\tau,x)|\right)^{q}u(\tau,x)\,dx\,d\tau\,.
\end{equation}
We want to use the gradient estimates \eqref{grad.est1},
\eqref{grad.est2}, \eqref{grad.est6}, and~\eqref{grad.est7}, and
thus split the proof into three cases.

\medskip

\noindent \textbf{Case 1: $p>p_c$ and $q\ge 1$}. In this case, by
using the gradient estimate \eqref{grad.est1}, together with the
decay estimate of the $L^{\infty}$-norm \eqref{decay1.big}, we
write, since $q>p/2$:
\begin{equation*}
\begin{split}
u(\tau,x)^{-1/q}\left|\nabla
u(\tau,x)\right|&=C\ u(\tau,x)^{(2q-p)/pq}\left|\nabla
u^{-(2-p)/p}(\tau,x)\right|\\
&\leq C\ \|u(\tau)\|_{\infty}^{(2q-p)/pq}\ \tau^{-1/p} \\
&\leq C\ \|u_0\|_{1}^{(2q-p)\eta/q}\ \tau^{-N\eta(2q-p)/pq-1/p},
\end{split}
\end{equation*}
hence
\begin{equation*}
\left( u^{-1/q}(\tau,x)\left|\nabla u(\tau,x)\right| \right)^q\leq
C(u_0)\tau^{-\eta/\xi}.
\end{equation*}
Plugging this inequality into \eqref{cons} and taking into account
that $\xi<\eta$, it follows that
\begin{equation*}
\begin{split}
\|u(s)\|_{1}&\leq\|u(t)\|_{1}+C(u_0)\int_{s}^{t}\|u(\tau)\|_{1}\tau^{-\eta/\xi}\,d\tau\\
&\leq\|u(t)\|_{1}+C(u_0)\|u(s)\|_{1}s^{-\eta(N+1)(q-q_{\star})},
\end{split}
\end{equation*}
where we have used the time monotonicity \eqref{decaymass} of the
$L^1$-norm. We can rewrite the last inequality as
\begin{equation}\label{last1}
\|u(t)\|_1\geq\|u(s)\|_1\left(1-C(u_0)s^{-\eta(N+1)(q-q_{\star})}\right).
\end{equation}
Using again that the exponent of $s$ in the right-hand side of
\eqref{last1} is negative, we realize that
$$\|u(t)\|_1\geq\frac{1}{2}\|u(s)\|_1\,, \quad t\ge s\,,$$ for $s$ large enough. Thus, using the non-extinction result of
Proposition~\ref{nonextinction}, we find that
$\lim\limits_{t\to\infty}\|u(t)\|_1>0$.

\medskip

\noindent \textbf{Case 2: $p>p_c$ and $q_{\star}<q<1$}. We use the
same ideas as above, but with slight changes since the gradient
estimate has now an extra term. Since \eqref{eq1} is autonomous, we
infer from \eqref{grad.est2} and \eqref{decay1.big} that
\begin{eqnarray*}
\left|\nabla u^{-(2-p)/p}(\tau,x)\right| & \leq & C\ \left( \left\| u\left( \frac{\tau}{2} \right) \right\|_\infty^{(2q-p)/p(p-q)}
+ \left( \frac{2}{\tau} \right)^{1/p} \right)\\
& \leq & C(u_0)\tau^{-1/p} \left(
1+\tau^{-\eta(N+1)(q-q_{\star})/(p-q)} \right)\leq
C(u_0)\tau^{-1/p},
\end{eqnarray*}
for any $\tau\geq1$. The proof then is the same as in Case~1 above.

\medskip

\noindent \textbf{Case 3: $p=p_c$ and $q>q_{\star}=p_c/2$}. To
estimate $u^{-1/q}\ |\nabla u|$, we use \eqref{wu3},
\eqref{grad.est6} (if $q\ge 1$) or \eqref{grad.est7} (if $q\in
(p_c/2,1)$), and the boundedness of the function $z\mapsto
z^{(2q-p_c)/2p_c q}\ \log(e\|u_0\|_\infty/z)$ in
$[0,\|u_0\|_\infty]$ to obtain, since $\tau\ge s\ge 1$, \bean
u(\tau,x)^{-1/q}\ |\nabla u(\tau,x)| & \le & C\ u(\tau,x)^{(2q-p_c)/p_c q}\ \left| \nabla u^{-1/N}(\tau,x) \right| \\
& \le & C(u_0)\ u(\tau,x)^{(2q-p_c)/p_c q}\ \left( \log\left( \frac{e \|u_0\|_\infty}{u(\tau,x)} \right) \right)^{1/p_c}\ \tau^{-1/p_c} \\
& \le & C(u_0)\ u(\tau,x)^{(2q-p_c)/2p_c q}\ \tau^{-1/p_c} \\
& \le & C'(u_0)\ e^{-C(u_0) \tau}\,. \eean This estimate,
\eqref{cons}, and the time monotonicity \eqref{decaymass} of the
$L^1$-norm lead us to
$$
\|u(s)\|_1 \le \|u(t)\|_1 + C'(u_0)\ \|u(s)\|_1\ e^{-C(u_0)s}\,,
\quad t\ge s\,,
$$
and we complete the proof as above with the help of
Proposition~\ref{nonextinction}.
\end{proof}

For the complementary case, things are different.

\begin{proposition}\label{zerolimit}
Let $p\in (1,2)$ and $q\in(0,q_{\star}]$. Then
$\lim\limits_{t\to\infty}\|u(t)\|_1=0$.
\end{proposition}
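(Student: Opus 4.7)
I would split the analysis according to whether extinction has already been established. In the range $p\in(1,p_c)$, or $p\ge p_c$ and $q\in(0,N/(N+1))$, Theorem~\ref{th:warmup}(iv) yields a finite extinction time $T_{\rm e}$ with $u\equiv0$ on $[T_{\rm e},\infty)\times\RR^N$, whence $\|u(t)\|_1=0$ for $t\ge T_{\rm e}$ and we are done. The remaining, delicate range is $p\ge p_c$ and $q\in[N/(N+1),q_\star]$; for $p=p_c$ this collapses to the single exponent $q=q_\star=N/(N+1)$, and in all of this range Theorem~\ref{th:warmup}(ii)--(iii) only provides $L^\infty$ decay, not extinction.

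In the delicate range I would argue by contradiction. Since $t\mapsto\|u(t)\|_1$ is non-increasing by~\eqref{gaston}, the limit $L:=\lim_{t\to\infty}\|u(t)\|_1$ exists in $[0,\|u_0\|_1]$, and I suppose $L>0$. Exploiting that \eqref{eq1} is autonomous, I reapply Theorem~\ref{th:warmup}(ii)--(iii) with $u(s,\cdot)$ as initial datum to obtain
\[
\|u(t)\|_\infty\le C\|u(s)\|_1^{q\xi}(t-s)^{-N\xi},\qquad 0\le s<t,
\]
so that asymptotically $\|u(t)\|_\infty\le CL^{q\xi}t^{-N\xi}$ (or exponential when $q=N/(N+1)$). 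The mass-balance identity from Propositions~\ref{le:u1}--\ref{le:u1b}, extended when necessary to $q\in[N/(N+1),p/2)$ or to $p=p_c$ with generic $u_0$ by an approximation argument analogous to the cutoff in~\eqref{u2}, yields
\[
\int_0^{\infty}\int_{\RR^N}|\nabla u(\tau,x)|^q\,dx\,d\tau = \|u_0\|_1-L<\infty.
\]

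\paragraph{Key step and main obstacle.} The crux of the argument is to produce a non-integrable lower bound of the form
\[
\int_{\RR^N}|\nabla u(\tau,x)|^q\,dx \ge c(L)\,\tau^{-\gamma}, \qquad \gamma\le 1,
\]
valid whenever $\|u(\tau)\|_1\ge L/2$, which would contradict the finiteness above. The natural route is to combine the Hamilton--Jacobi gradient estimate~\eqref{grad.estHJ}--\eqref{grad.estHJ2}, which gives $\|\nabla u(\tau)\|_\infty\le C\tau^{-1/q}$, with a Gagliardo--Nirenberg-type inequality of the shape $\int|\nabla u|^q\gtrsim\|u\|_1^{N/(N+1)}\|\nabla u\|_\infty^{q-N/(N+1)}$; the exponent $q=q_\star$ is precisely the one at which the resulting time density is of order $\tau^{-1}$, and for $q<q_\star$ the imbalance is even more favourable. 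This is the reverse of the computation used in Proposition~\ref{positivity}, where the scaling exponent $\eta/\xi$ controlled the opposite inequality, and reflects the fact that at $q=q_\star$ the integral $\int_1^\infty\tau^{-\eta/\xi}\,d\tau$ diverges.

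\paragraph{The obstacle.} Such a Gagliardo--Nirenberg lower bound does not hold for arbitrary functions---a thin, spread-out configuration can have arbitrarily small $L^q$-gradient integral while keeping $\|u\|_1$ bounded below---so one must exploit structural features of the viscosity solution to~\eqref{eq1}. The most promising levers are the pointwise lower bound on $u$ inherited from the gradient estimate on $u^{(p-2)/p}$ in Theorem~\ref{th:grad1}(i)--(ii), which forces $u(\tau,\cdot)$ to enjoy a Barenblatt-type power-law profile rather than spreading as an indicator function, together with the positivity statement of Corollary~\ref{cor:positivity} when it applies ($q>p/2$). Carrying out this reverse Gagliardo--Nirenberg rigorously, and handling the edge cases $q\in[N/(N+1),p/2)$ and $p=p_c$ where the key gradient estimates and mass balance are most fragile, is the technical heart of the proof.
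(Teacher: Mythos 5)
Your proposal handles the easy extinction regime ($p<p_c$, or $p\ge p_c$ and $q<N/(N+1)$) the same way the paper does: invoke Theorem~\ref{th:warmup}. The set-up for the remaining range is also sensible---use monotonicity of $\|u(t)\|_1$, autonomy of \eqref{eq1}, the $L^\infty$ decay, and $|\nabla u|^q\in L^1(Q_\infty)$ coming from \eqref{decaymass}. But the crux of your argument is a ``reverse Gagliardo--Nirenberg'' inequality of the shape $\int|\nabla u|^q\gtrsim\|u\|_1^{a}\|\nabla u\|_\infty^{b}$, and you yourself state that no such inequality holds for general $W^{1,\infty}\cap L^1$ functions and that making it work for the specific solution is ``the technical heart of the proof.'' That is exactly the gap: you never close it, and there is no indication in the paper (nor an obvious route) to a pointwise lower bound or structural rigidity of $u$ strong enough to force such an inequality uniformly in time, especially in the fragile range $q\in[N/(N+1),p/2)$ and at $p=p_c$ where the positivity machinery you point to (Corollary~\ref{cor:positivity}) is not available. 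There is also a secondary slip in the exponent bookkeeping: feeding $\|\nabla u(\tau)\|_\infty\lesssim\tau^{-1/q}$ into the asserted inequality would give a density $\tau^{-(q-N/(N+1))/q}$, which does not equal $\tau^{-1}$ at $q=q_\star$; the quantity $\eta/\xi$ that you invoke belongs to the converse estimate in Proposition~\ref{positivity}, not to this one.

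The paper avoids any reverse Gagliardo--Nirenberg entirely. It sets $\omega(t):=\int_t^\infty\!\int|\nabla u|^q\,\to0$, tests \eqref{eq1} against $1-\vartheta_R$, integrates by parts and in time to obtain \eqref{qqqq}, and then in the main subcase $q\in[\,\max(p-1,N/(N+1)),\,q_\star]$ applies H\"older to the flux term (pairing $|\nabla u|^{p-1}$ with exponent $q/(p-1)$ and $|\nabla\vartheta_R|$ with exponent $q/(q-p+1)$) to bound it by $R^{(N(q-p+1)-q)/q}(t_2-t_1)^{(q-p+1)/q}\omega(t_1)^{(p-1)/q}$. Combining this with the inner--ball estimate $\int_{|x|<2R}u(t_2)\le C R^N\|u(t_2)\|_\infty$ and the decay \eqref{decay1.small}, and optimizing $R=R(t_1,t_2)$, yields $\lim_t\|u(t)\|_1\le C\,\omega(t_1)^{N(p-1)/(q+N(p-1))}\to0$. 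The residual subrange $q\in(N/(N+1),p-1)$ is reduced to the one already treated by the scaling/comparison trick with an exponent $Q\in(p-1,q_\star)$. Thus the two approaches are genuinely different, and yours as written is incomplete at its central step.
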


\begin{proof}
The proof follows that of \cite[Proposition~5.1]{BtL08}. For $t\ge
0$, we have
\begin{equation*}
\|u(t)\|_{1}+\int_0^t\int |\nabla u(s,x)|^q\,dxds\le\|u_0\|_1
\end{equation*}
by \eqref{decaymass}, hence $|\nabla u|^q\in
L^1((0,\infty)\times\RR^N)$. Therefore
\begin{equation}\label{omeg}
\omega(t):=\int_t^{\infty}\int |\nabla u(s,x)|^q\,dxds\to 0 \
\hbox{as} \ t\to\infty.
\end{equation}
Consider now a non-negative and smooth compactly supported function
$\vartheta$ such that $0\le\vartheta\le 1$, $\vartheta(x)=1$ for
$x\in B_1(0)$ and $\vartheta(x)=0$ for $x\in\real^N\setminus B_2(0)$
and define $\vartheta_{R}(x)=\vartheta(x/R)$ for $R>1$ and
$x\in\RR^N$. We multiply the equation \eqref{eq1} by
$1-\vartheta_{R}$ and integrate over $(t_1,t_2)\times\RR^N$ to
obtain
\begin{equation*}
\begin{split}
\int u(t_2,x)(1-\vartheta_R(x))\,dx &\leq\int
u(t_1,x)(1-\vartheta_{R}(x))\,dx \\
& +\int_{t_1}^{t_2}\int|\nabla u(s,x)|^{p-2}\nabla
u(s,x)\cdot\nabla\vartheta_R(x)\,dxds,
\end{split}
\end{equation*}
hence, taking into account the definition of $\vartheta_R$,
\begin{equation}\label{qqqq}
\int\limits_{|x|\geq2R}u(t_2,x)\,dx\leq\int\limits_{|x|\geq
R}u(t_1,x)\,dx+\frac{1}{R}\int_{t_1}^{t_2}\int|\nabla
u(s,x)|^{p-1}|\nabla\vartheta(x/R)|\,dxds.
\end{equation}
We now divide the proof into two cases.

\medskip

\noindent \textbf{Case 1: $p\ge p_c$, $q\in [N/(N+1), q_{\star}]$}.
Let us first consider the case where $q\in[p-1,q_{\star}]$ and
$q>N/(N+1)$. We apply H\"older's inequality to estimate
\begin{eqnarray*}
& & \frac{1}{R}\int_{t_1}^{t_2}\int |\nabla u(s,x)|^{p-1}|\nabla\vartheta(x/R)|\,dxds \\
& \leq & R^{(N(q-p+1)-q)/q} \|\nabla\vartheta\|_{q/(q-p+1)}
(t_2-t_1)^{(q-p+1)/q} \left(\int_{t_1}^{t_2} \int|\nabla
u(s,x)|^q\,dxds\right)^{(p-1)/q} \\
& \leq & C(\vartheta)R^{(N(q-p+1)-q)/q} (t_2-t_1)^{(q-p+1)/q}
\omega(t_1)^{(p-1)/q},
\end{eqnarray*}
hence, replacing in \eqref{qqqq} we obtain
\begin{eqnarray}
\|u(t_2)\|_{1}&=&\int\limits_{|x|<2R}u(t_2,x)\,dx+\int\limits_{|x|\ge2R}u(t_2,x)\,dx\nonumber\\
&\leq&
CR^{N}\|u(t_2)\|_{\infty}+C(\vartheta)R^{(N(q-p+1)-q)/q} (t_2-t_1)^{(q-p+1)/q} \omega(t_1)^{(p-1)/q} \nonumber\\
& & +\int\limits_{|x|\ge R}u(t_1,x)\,dx. \label{gaston}
\end{eqnarray}
Taking into account that $\|u(t_2)\|_{\infty}\leq C(u_0)
(t_2-t_1)^{-N\xi}$ by \eqref{decay1.small}, we optimize in $R$ in
the previous inequality. Choosing
\begin{equation*}
R=R(t_1,t_2):=\omega(t_1)^{(p-1)/(N(p-1)+q)} (t_2-t_1)^{(qN\xi+q-p+1)/(q+N(p-1))},
\end{equation*}
we obtain
\begin{equation*}
\begin{split}
\|u(t_2)\|_\infty\le & C(u_0,\vartheta)\ \omega(t_1)^{N(p-1)/(N(p-1)+q)}\ (t_2-t_1)^{qN(N+1)\xi(q-q_\star)/(N(p-1)+q)} \\
& + \int\limits_{|x|\ge R(t_1,t_2)}u(t_1,x)\ dx.
\end{split}
\end{equation*}
Noting that
$$
qN\xi+q-p+1 = \xi\ (q(N+1) (q-p+1) + N(p-1))> 0
$$
since $\xi>0$ and $q\ge p-1$, we may let $t_2\to\infty$ in the
previous estimate to obtain that $\|u(t_2)\|_\infty\to 0$ as
$t_2\to\infty$ when $q<q_\star$, and that
\begin{equation*}
\lim\limits_{t\to\infty}\|u(t)\|_1\leq C(u_0,\vartheta)
\omega(t_1)^{N(p-1)/(q+N(p-1))}\to 0 \ \hbox{as} \ t_1\to 0,
\end{equation*}
for $q=q_\star$.

\medskip

In the remaining case we can always fix $Q\ge q$ such that $Q\in
(p-1,q_{\star})$ and $Q>N/(N+1)$. Introducing
$$
\tilde{u}(t,x) := \|\nabla u_0\|_\infty^{-(Q-q)/(Q-p+1)}\ u\left(
\|\nabla u_0\|_\infty^{((2-p)(Q-q))/(Q-p+1)}\ t , x \right)\,,
\qquad (t,x)\in Q_\infty\,,
$$
we deduce from \eqref{eq1},\eqref{a2}, and \eqref{prunelle} that
\bean
\partial_t\tilde{u}(t,x) & = & \|\nabla u_0\|_\infty^{-((p-1)(Q-q))/(Q-p+1)}\ \partial_ t u\left( \|\nabla u_0\|_\infty^{((2-p)(Q-q))/(Q-p+1)}\ t , x \right) \\
& = & \|\nabla u_0\|_\infty^{-((p-1)(Q-q))/(Q-p+1)}\ (\Delta_p u - |\nabla u|^q)\left( \|\nabla u_0\|_\infty^{((2-p)(Q-q))/(Q-p+1)}\ t , x \right) \\
& \le & \Delta_p \tilde{u}(t,x) - \|\nabla u_0\|_\infty^{Q-q}\ |\nabla\tilde{u}(t,x)|^Q \left\|\nabla u\left( \|\nabla u_0\|_\infty^{((2-p)(Q-q))/(Q-p+1)}\ t \right) \right\|_\infty^{q-Q} \\
& \le & \Delta_p \tilde{u}(t,x) - |\nabla\tilde{u}(t,x)|^Q\,, \eean
with $\tilde{u}(0)=U_0:=\|\nabla u_0\|_\infty^{-(Q-q)/(Q-p+1)}\
u_0$. Denoting the solution to \eqref{eq1}-\eqref{a2} with $Q$
instead of $q$ and $U_0$ instead of $u_0$ by $U$, the comparison
principle entails that $\tilde{u}\le U$ in $Q_\infty$. According to
the choice of $Q$, we are in the situation of the previous case and
thus $\|U(t)\|_1\to 0$ as $t\to \infty$ and so do
$\|\tilde{u}(t)\|_1$ and $\|u(t)\|_1$.

\medskip

\noindent \textbf{Case 2: $p\ge p_c$ and $q\in(0,N/(N+1))$ or
$p<p_c$}. It is an obvious consequence of the extinction in finite
time established in Theorem~\ref{th:warmup}.
\end{proof}

\section{Improved decay rates and extinction}\label{se:idrae}

While the behavior of solutions $u$ to \eqref{eq1} depends strongly
on the values of $p$ and $q$ as depicted in Theorem~\ref{th:idrext},
it turns out that, as we shall see below, the proofs also vary with
these two parameters. Indeed, recalling the definition of $q_1$ in
\eqref{notations.cons}, finite time extinction will follow by the
comparison principle when either $p\in (1,p_c)$ or $p\ge p_c$ and
$q\in (0,q_1]$, while a differential inequality will be used for
$p>p_c$ and $q\in (q_1,p/2)$. A similar differential inequality will
actually allow us to prove the stated temporal decay rates for
$p>p_c$ and $q\in [p/2,q_\star)$. The particular case $p=p_c$ has to
be handled separately. Still, the proof of Theorem~\ref{th:idrext}
for $p\in (p_c,2)$ and $q\in (q_1,q_\star)$, $(p,q)\ne (p_c,p_c/2)$,
relies on the following preliminary result:

\begin{lemma}\label{le:v1}
Assume that $p\in (p_c,2)$, $q\in (p-1,q_\star)$, and consider $u_0$
satisfying \eqref{assump} and \beqn 0 \le u_0(x) \le K_0\
|x|^{-(p-q)/(q-p+1)}\,, \quad x\in\RR^N\,, \label{v1} \eeqn for some
$K_0>0$. Then, for $s\ge 0$ and $t>s$, we have \beqn \|u(t)\|_1 \le
C(u_0)\ \|u(t)\|_\infty^{\theta}\,, \label{v2} \eeqn with \beqn
\theta := (N+1)(q_\star-q)/(p-q)\,. \label{v4} \eeqn Assume further
that $q\in (q_1,q_\star)$. Then \beqn \|u(t)\|_1 \le C(u_0)\
\|u(s)\|_1^{q\xi \theta}\ (t-s)^{-N\xi \theta}\,, \label{v3} \eeqn
where $\xi$ is defined in \eqref{notations.cons}.
\end{lemma}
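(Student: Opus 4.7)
The plan is to build an explicit stationary supersolution of self-similar type, apply the comparison principle to get pointwise spatial decay of $u$, optimize the split of $\|u(t)\|_1$ into a small-ball and a tail part, and finally combine with the $L^\infty$ decay estimate \eqref{spirou}.

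First I would treat \eqref{v2}. Set $\alpha := (p-q)/(q-p+1)>0$ (the exponent in \eqref{v1}) and try $W(x) := C |x|^{-\alpha}$ as a candidate stationary supersolution. A direct computation with the radial form of $\Delta_p$, using the identity $\alpha+1 = 1/(q-p+1)$, gives
\[
-\Delta_p W(x) + |\nabla W(x)|^q = \Bigl\{\bigl[N-1-(p-1)(\alpha+1)\bigr](\alpha C)^{p-1} + (\alpha C)^{q}\Bigr\} |x|^{-q/(q-p+1)}.
\]
Since $q-p+1>0$, I can choose $C=C(N,p,q,K_0)$ large enough, with $C\geq K_0$, to make the bracket non-negative. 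So $W$ is a stationary supersolution to \eqref{eq1} on $\RR^N\setminus\{0\}$, and by \eqref{v1}, $u_0(x)\le W(x)$ there. The comparison principle then yields $u(t,x)\le C|x|^{-\alpha}$ for all $(t,x)\in Q_\infty$.

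Second, combining this pointwise bound with the trivial $u(t,x)\le \|u(t)\|_\infty$, for any $r>0$:
\[
\|u(t)\|_1 \le C_1 r^N \|u(t)\|_\infty + C_2 \int_{|x|>r} |x|^{-\alpha}\,dx.
\]
A short arithmetic check shows that $q<q_\star$ is equivalent to $\alpha>N$, so the tail integral converges and equals $C_3 r^{N-\alpha}$. Minimizing the right-hand side over $r>0$ gives $\|u(t)\|_1\le C(u_0)\,\|u(t)\|_\infty^{(\alpha-N)/\alpha}$, and a direct computation verifies
\[
\frac{\alpha-N}{\alpha} = (N+1) - \frac{N}{p-q} = \frac{(N+1)(q_\star-q)}{p-q} = \theta,
\]
which establishes \eqref{v2}.

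For \eqref{v3}, the stronger hypothesis $q\in(q_1,q_\star)$ guarantees in particular $q>N/(N+1)$, so the decay estimate \eqref{spirou} applies and gives $\|u(t)\|_\infty \le C\,\|u(s)\|_1^{q\xi}\,(t-s)^{-N\xi}$ for $0\le s<t$. Raising this to the power $\theta$ and inserting into \eqref{v2} at time $t$ yields \eqref{v3}.

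The main obstacle I anticipate is technical rather than conceptual: justifying rigorously the comparison with the singular profile $W$ despite its pole at the origin. This can be handled by the observation that $u\le\|u_0\|_\infty$ while $W(x)\to\infty$ as $x\to 0$, so there is a small ball on which $W\ge u$ automatically; standard viscosity comparison then applies on the complement, where $W$ is smooth.
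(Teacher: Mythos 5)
Your argument follows the same route as the paper: construct the stationary supersolution $A\,|x|^{-(p-q)/(q-p+1)}$ (the paper calls it $A\,\Sigma_{p,q}$ and gives the explicit constant $A_0$), compare to get the pointwise decay $u(t,x)\lesssim|x|^{-\alpha}$, split $\|u(t)\|_1$ into a ball and a tail and optimize the radius, and then combine with \eqref{spirou} to obtain \eqref{v3}. The computations and exponent identities check out, so this is a correct proof by essentially the same method.
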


\begin{proof}
For $x\in\RR^N$, $x\ne 0$, we define
$$\Sigma_{p,q}(x) :=
|x|^{-(p-q)/(q-p+1)} \;\;\mbox{ and }\;\; A_0 := \frac{q-p+1}{p-q}\
\left( \frac{N(p-1)-q(N-1)}{q-p+1} \right)^{1/(q-p+1)}\,.
$$
An easy computation shows that, for any $A\ge A_0$, $A\
\Sigma_{p,q}$ is a classical (stationary) supersolution to
\eqref{eq1} in $\RR^N\setminus\{0\}$. Owing to \eqref{v1} $u_0 \le
A\ \Sigma_{p,q}$ for $A=\max{\{K_0,A_0\}}$ and the comparison
principle ensures that \beqn u(t,x) \le A\ \Sigma_{p,q}(x)\,, \qquad
(t,x)\in Q_\infty\,. \label{v6} \eeqn Since $q<q_\star$, it follows
from \eqref{v6} that, for $t>0$ and $R>0$, we have \bean
\|u(t)\|_1 & \le & \int_{B_R(0)} u(t,x)\ dx + \int_{\RR^N\setminus B_R(0)} u(t,x)\ dx \\
& \le & C\ R^N\ \|u(t)\|_\infty + C(u_0)\ \int_R^\infty r^{N-1-((p-q)/(q-p+1))}\ dr \\
& \le & C(u_0)\ \left( R^N\ \|u(t)\|_\infty +
R^{-(N+1)(q_\star-q)/(q-p+1)} \right)\,. \eean Choosing $R=\left(
\|u(t)\|_\infty + \delta \right)^{-(q-p+1)/(p-q)}$ for $\delta\in
(0,1)$, we obtain that
$$
\|u(t)\|_1 \le C\ \left( \|u(t)\|_\infty  + \delta
\right)^{\theta}\,,
$$
the parameter $\theta$ being defined in \eqref{v4}. Since $\theta>0$
and the above inequality is valid for all $\delta\in (0,1)$, we end
up with \eqref{v2} after letting $\delta\to 0$. We next combine
\eqref{spirou} and \eqref{v2} to deduce \eqref{v3}.
\end{proof}

\subsection{Improved decay}

In this subsection we prove the first part of Theorem~\ref{th:idrext}.

\begin{proof}[Proof of Theorem~\ref{th:idrext}~(i): $p\in(p_c,2)$ and $q\in (p/2,q_\star)$]
Consider $T>0$ and define
$$
m(T) := \sup_{t\in (0,T]}{\left\{ t^{(p-q)\theta/(2q-p)}\ \|u(t)\|_1
\right\}}\,,
$$
the parameter $\theta$ being defined in \eqref{v4}. Let $t\in
(0,T]$. Since $u_0$ satisfies \eqref{x1} and $q\in (q_1,q_\star)$,
we infer from \eqref{v3} with $s=t/2$ that \bean
t^{(p-q)\theta/(2q-p)}\ \|u(t)\|_1 & \le & C(u_0)\ \left\| u\left( \frac{t}{2} \right) \right\|_1^{q\xi \theta}\ t^{(p-q-N\xi(2q-p))\theta/(2q-p)}\\
& \le & C(u_0)\ \left\| u\left( \frac{t}{2} \right) \right\|_1^{q\xi \theta}\ \left( \frac{t}{2} \right)^{q(p-q)\xi\theta^2/(2q-p)} \\
& = & C(u_0)\ \left\{ \left( \frac{t}{2} \right)^{(p-q)\theta/(2q-p)}\ \left\| u\left( \frac{t}{2} \right) \right\|_1 \right\}^{q\xi \theta} \\
& \le & C(u_0)\ m(T)^{q\xi \theta}\,. \eean The above estimate being
valid for all $t\in (0,T]$, we conclude that $m(T)\le C(u_0)\
m(T)^{q\xi \theta}$, whence $m(T)\le C(u_0)$ since
$$
q\xi \theta = 1 - \frac{N\xi(2q-p)}{p-q} < 1\,.
$$
Since the constant $C(u_0)$ in the bound on $m(T)$ does not depend
on $T>0$, we have thus shown that \beqn \|u(t)\|_1\le C(u_0)\
t^{-(p-q)\theta/(2q-p)}\,, \qquad t>0\,.\label{v7} \eeqn Combining
\eqref{spirou} (with $s=t/2$) and \eqref{v7} gives
$$
\|u(t)\|_\infty\le C(u_0)\ t^{-(p-q)/(2q-p)}\,, \qquad t>0\,,
$$
and completes the proof of \eqref{x2}.
\end{proof}

\subsection{Exponential decay}

In this subsection we prove the second part of Theorem~\ref{th:idrext}, which illustrates the role of branching point that
our new (and initially unexpected) critical exponent $q=p/2$ plays
on the large time behavior of solutions to \eqref{eq1}.

\begin{proof}[Proof of Theorem~\ref{th:idrext}~(ii): $p\in (p_c,2)$ and $q=p/2$]
In that case, the parameter $\theta$ defined in \eqref{v4} satisfies
$q\xi\theta=1$, $N\xi\theta=2N/p$, and, since $q\in (q_1,q_\star)$
and $u_0$ satisfies \eqref{x1}, it follows from \eqref{v3} that
\beqn \|u(t)\|_1 \le C(u_0)\ (t-s)^{-2N/p}\ \|u(s)\|_1\,, \qquad
0\le s<t\,. \label{v8} \eeqn Let $B>0$ be a positive real number to
be determined later, $T>B$ and define
$$
m(T) := \sup_{t\in (0,T]}{\left\{ e^{t/B}\ \|u(t)\|_1 \right\}}\,.
$$
If $t\in (B,T]$, we infer from \eqref{v8} with $s=t-B\in (0,T]$ that
$$
e^{t/B}\ \|u(t)\|_1 \le C(u_0)\ B^{-2N/p}\ e^{t/B}\ \|u(t-B)\|_1 \le
C(u_0) e\ B^{-2N/p}\ m(T)\,,
$$
while, if $t\in (0,B]$, we have $e^{t/B}\ \|u(t)\|_1 \le e\
\|u_0\|_1$. Therefore, \bean
& & e^{t/B} \ \|u(t)\|_1 \le e\ \|u_0\|_1 + C(u_0)\ B^{-2N/p}\ m(T)\,, \qquad t\in (0,T]\,, \\
& & \left( 1 - \frac{C(u_0)}{B^{2N/p}} \right)\ m(T) \le e\
\|u_0\|_1\,. \eean Choosing $B$ suitably large such that
$B^{2N/p}\ge 2C(u_0)$ ensures that $m(T)$ is bounded from above by a
positive constant which does not depend on $T$. Consequently,
$\|u(t)\|_1\le C(u_0)\ e^{-t/B}$ for $t\ge 0$ which implies together
with \eqref{spirou} that $\|u(t)\|_\infty$ also decays at an
exponential rate with a possibly different constant.
\end{proof}

We now show that, at least for $p>p_c$, the exponential decay
obtained so far is optimal in the sense that the $L^1$-norm of $u$
cannot decay faster than exponentially. More precisely, we have the
following result:

\begin{proposition}\label{pr:v3}
If $p\in (p_c,2)$, $q=p/2$, and $u_0$ satisfies \eqref{assump}, then
there are positive constants $C_1(u_0)$ and $C_1'(u_0)$ depending on
$p$, $q$, $N$, and $u_0$ such that \beqn \|u(t)\|_1 +
\|u(t)\|_\infty \ge C_1'(u_0)\ e^{-C_1(u_0)t}\,, \quad t>0\,.
\label{lv1} \eeqn In addition, $\mathcal{P}=Q_\infty$.
\end{proposition}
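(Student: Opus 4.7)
My strategy is to first derive an exponential lower bound on $\|u(t)\|_1$ alone (which automatically implies the bound on $\|u(t)\|_1+\|u(t)\|_\infty$), and then to deduce pointwise positivity from the same gradient estimate. The key observation is that when $q=p/2$ the exponent $(2q-p)/(p(p-q))$ appearing in \eqref{grad.est2} vanishes, so that estimate, together with Remark~\ref{rem:1}, reads
\begin{equation*}
\left|\nabla(u+\delta)^{-(2-p)/p}(t,x)\right|\leq c_0\left(1+t^{-1/p}\right),\qquad (t,x)\in Q_\infty,\ \delta>0.
\end{equation*}

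The first step will be to translate this into the pointwise bound $|\nabla u(t,x)|\leq c_1\,u(t,x)^{2/p}(1+t^{-1/p})$ for a.e.\ $(t,x)\in Q_\infty$, which is legitimate because $u$ is Lipschitz by~\eqref{prunelle}, and hence $\nabla u=0$ a.e.\ on $\{u=0\}$. Raising to the power $q=p/2$ and integrating (and using $(1+t^{-1/p})^{p/2}\leq 1+t^{-1/2}$) will give
\begin{equation*}
\int_{\RR^N}|\nabla u(t,x)|^{p/2}\,dx\leq c_2\,(1+t^{-1/2})\,\|u(t)\|_1,
\end{equation*}
which combined with the mass identity~\eqref{u1} of Proposition~\ref{le:u1} yields the integral inequality
\begin{equation*}
\|u(t)\|_1\geq\|u(s)\|_1-\int_s^tc_2(1+\tau^{-1/2})\,\|u(\tau)\|_1\,d\tau,\qquad 0\leq s<t.
\end{equation*}
Since $1+\tau^{-1/2}$ is integrable near $\tau=0$, Gronwall's lemma with $s=0$ then produces $\|u(t)\|_1\geq\|u_0\|_1\exp(-c_2(t+2\sqrt{t}))$, and the elementary bound $2\sqrt{t}\leq t+1$ gives the claimed lower bound $\|u(t)\|_1\geq C_1'(u_0)\,e^{-C_1(u_0)t}$ for all $t\geq 0$.

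For the assertion $\cp=Q_\infty$, I will fix $(t_0,x_0)\in Q_\infty$; by the lower bound just obtained, $\|u(t_0)\|_1>0$, so there exists some $x_1\in\RR^N$ with $u(t_0,x_1)>0$. Integrating the $\delta$-version of the gradient estimate along the segment $[x_1,x_0]$ and then letting $\delta\to 0$ yields
\begin{equation*}
u(t_0,x_0)^{-(2-p)/p}\leq u(t_0,x_1)^{-(2-p)/p}+c_0(1+t_0^{-1/p})\,|x_0-x_1|<\infty,
\end{equation*}
forcing $u(t_0,x_0)>0$. The main conceptual point, which I expect to be the hard part to articulate cleanly, is that $q=p/2$ is precisely the value at which the exponent of $u$ in the integrand $|\nabla u|^q$ becomes exactly $1$, producing a \emph{linear} differential inequality for $\|u(t)\|_1$ whose Gronwall solution is exponential, rather than algebraic as for $q\in(p/2,q_\star)$ or vanishing in finite time as for $q<p/2$. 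The only technical subtlety lies in the $t^{-1/p}$ singularity of the gradient estimate at $t=0$, but the bound $p/2<1$ keeps $(1+t^{-1/p})^{p/2}$ integrable, so the Gronwall argument closes down to $s=0$ without additional input.
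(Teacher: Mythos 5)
Your proof is correct and follows essentially the same route as the paper: plug the gradient estimate \eqref{grad.est2} specialized to $q=p/2$ into the mass identity of Proposition~\ref{le:u1} to get a linear differential inequality with a $t^{-1/p}$-singular but integrable coefficient, apply Gronwall, and then deduce pointwise positivity from the same gradient estimate by integrating along a segment (the paper invokes the argument of Corollary~\ref{cor:positivity}, which uses a ball-averaging variant, but this is an inessential difference of bookkeeping). Your handling of the a.e.\ pointwise bound via $\nabla u=0$ a.e.\ on $\{u=0\}$, and of the $t\to0$ singularity, are both correct.
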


\begin{proof}
Let $t>0$. By Proposition~\ref{le:u1}, we have
$$
\frac{d}{dt}\|u(t)\|_1 + \int |\nabla u(t,x)|^{p/2}\ dx = 0\,,
$$
while the gradient estimate \eqref{grad.est2} implies that
$$
|\nabla u(t,x)| = \frac{p}{2-p}\ u^{2/p}(t,x)\ \left| \nabla
u^{-(2-p)/p}(t,x) \right| \le C(u_0)\ u^{2/p}(t,x)\
\left( 1+ t^{-1/p} \right)\,.
$$
Combining the above two properties leads us to
$$
0 \le \frac{d}{dt}\|u(t)\|_1 + C(u_0)\ \left( 1+ t^{-1/p}
\right)\ \|u(t)\|_1\,,
$$
from which we readily conclude that $\|u(t)\|_1\ge \|u_0\|_1\
e^{-C(u_0)(t+t^{1/p})}$ for $t\ge 0$. On the one hand, this implies
that $\|u(t)\|_1 \ge \|u_0\|_1\ e^{-C(u_0)t}$ for $t\ge 1$, whence
\eqref{lv1}. On the other hand, we have $\|u(t)\|_1>0$ for all $t>0$
and we proceed as in the proof of Corollary~\ref{cor:positivity} to
show that $u(t,x)>0$ in $Q_\infty$.
\end{proof}

\medskip

\begin{proof}[Proof of Proposition~\ref{pr:pos}]
We check the first assertion which readily follows from
Proposition~\ref{nonextinction} and Corollary~\ref{cor:positivity}
when $p>p_c$ and $q>p/2$ and from Proposition~\ref{pr:v3} for
$p>p_c$ and $q=p/2$. Consider next the case $p=p_c$ and $q>p_c/2$. A
classical truncation argument ensures that there exists a
non-negative compactly supported function $\tilde{u}_0$ satisfying
\eqref{assump} and $\tilde{u}_0\le u_0$ in $\RR^N$. Denoting the
solution to \eqref{eq1} with initial condition $\tilde{u}_0$ by
$\tilde{u}$, we infer from the comparison principle that
$\tilde{u}\le u$ in $Q_\infty$. In addition, $\tilde{u}_0$ obviously
satisfies \eqref{u0b} for some $C_0>0$ and we are in a position to
apply Proposition~\ref{nonextinction} and
Corollary~\ref{cor:positivity} to $\tilde{u}$ and deduce that
$\|\tilde{u}(t)\|_1>0$ for all $t\ge 0$ and $\tilde{u}>0$ in
$Q_\infty$. Consequently, $u$ enjoys the same properties which
completes the proof of the first assertion in
Proposition~\ref{pr:pos}.

Next, the second assertion follows from Proposition~\ref{zerolimit}
if $q\in (0,q_\star]$ and from Proposition~\ref{positivity} if
$p>p_c$ ad $q>q_\star$. Finally, if $p=p_c$ and $q>q_\star$, there
is a non-negative compactly supported function $\tilde{u}_0$
satisfying \eqref{assump} and $\tilde{u}_0\le u_0$ in $\RR^N$. On
the one hand, the comparison principle guarantees that the solution
$\tilde{u}$ to \eqref{eq1} with initial condition $\tilde{u}_0$
satisfies $\tilde{u}\le u$ in $Q_\infty$. On the other hand,
$\tilde{u}_0$ clearly satisfies \eqref{u0b} for a suitable constant
$C_0$ and Proposition~\ref{positivity} ensures that
$\lim\limits_{t\to\infty}\|\tilde{u}(t)\|_1>0$. Combining these two
facts completes the proof of Proposition~\ref{pr:pos}.
\end{proof}

\subsection{Extinction}

To complete the proof of Theorem~\ref{th:idrext}, it remains to
establish that finite time extinction takes place when $p\ge p_c$
and $q\in (0,p/2)$. To this end, we need to handle separately and by
different methods the two cases: (a) $p\in (p_c,2)$ and $q\in
(q_1,p/2)$, (b) $p\in (p_c,2)$ and $q\in (0,q_1]$. Let us begin with
the case~(a) for which the proof uses Lemma~\ref{le:v1}.

\begin{proof}[Proof of Theorem~\ref{th:idrext}~(iii): $p\in (p_c,2)$ and $q\in (q_1,p/2)$]
In that case, we first observe that
$$
N\xi\theta > q \xi \theta = 1 + \frac{N\xi (p-2q)}{p-q}>1\,,
$$
the parameter $\theta$ being still defined in \eqref{v4}. Setting
$\lambda:=q/(N\xi\theta(p-q))$ and recalling that $q<p-q$ as $q<p/2$
and $u_0$ satisfies \eqref{x4} with $Q=q$, it follows from
\eqref{v3} that, for $s>0$, \bean
\tau(s) & := & \int_s^\infty \frac{\|u(t)\|_1^\lambda}{t}\ dt \le C(u_0)\ \|u(s)\|_1^{q\xi\theta\lambda}\  \int_s^\infty \frac{dt}{t (t-s)^{q/(p-q)}}\\
& \le & C(u_0)\ (-\tau'(s))^{q\xi\theta}\
s^{q\xi(N+1)(p-2q)/(p-q)}\,, \eean thus
$$\tau(s)^{1/(q\xi\theta)}\le-C(u_0)\ \tau'(s)\
s^{(p-2q)/(q_\star-q)}\,, $$ whence
$$
\tau'(s) + C(u_0)\ s^{-(p-2q)/(q_\star-q)}\
\tau(s)^{1/(q\xi\theta)}\le 0\,, \qquad s>0\,.
$$
Since
$$
\frac{p-2q}{q_\star-q} = 1 - \frac{1}{(N+1)\xi(q_\star-q)}<1\,,
$$
we infer from the above differential inequality that the function
$\tilde{\tau}: s \mapsto \tau\left( s^{(N+1)\xi(q_\star-q)} \right)$
satisfies
$$
\tilde{\tau}'(s) + C(u_0)\ \tilde{\tau}(s)^{1/(q\xi\theta)}\le 0\,,
\qquad s>0\,.
$$
Since $q\xi\theta>1$, we readily deduce from the above differential
inequality that $\tilde{\tau}(s)$ vanishes identically for $s$ large
enough and so do $\tau(s)$ and $\|u(s)\|_1$.
\end{proof}

We next turn to the remaining case for $p>p_c$ for which we cannot
use Lemma~\ref{le:v1}. We instead argue by comparison.

\begin{proof}[Proof of Theorem~\ref{th:idrext}~(iii): $p\in (p_c,2)$ and $q\in (0,q_1{]}$]
In that case, $q_1<p/2$ and, recalling that $Q\in (q_1,p/2)$ is
defined in \eqref{x4}, we put
$$
\tilde{u}(t,x) := \|\nabla u_0\|_\infty^{-(Q-q)/(Q-p+1)}\ u\left(
\|\nabla u_0\|_\infty^{((2-p)(Q-q))/(Q-p+1)}\ t , x \right)\,,
\qquad (t,x)\in Q_\infty\,.
$$
It follows from \eqref{eq1}, \eqref{a2}, and \eqref{prunelle} that
\bean
\partial_t\tilde{u}(t,x) & = & \|\nabla u_0\|_\infty^{-((p-1)(Q-q))/(Q-p+1)}\ \partial_t u\left( \|\nabla u_0\|_\infty^{((2-p)(Q-q))/(Q-p+1)}\ t , x \right) \\
& = & \|\nabla u_0\|_\infty^{-((p-1)(Q-q))/(Q-p+1)}\ (\Delta_p u - |\nabla u|^q)\left( \|\nabla u_0\|_\infty^{((2-p)(Q-q))/(Q-p+1)}\ t , x \right) \\
& \le & \Delta_p \tilde{u}(t,x) - \|\nabla u_0\|_\infty^{Q-q}\ |\nabla\tilde{u}(t,x)|^Q
\left\|\nabla u\left(\|\nabla u_0\|_\infty^{((2-p)(Q-q))/(Q-p+1)}\ t \right) \right\|_\infty^{q-Q} \\
& \le & \Delta_p \tilde{u}(t,x) - |\nabla\tilde{u}(t,x)|^Q\,, \eean
with $\tilde{u}(0)=U_0:=\|\nabla u_0\|_\infty^{-(Q-q)/(Q-p+1)}\
u_0$. Denoting the solution to \eqref{eq1}-\eqref{a2} with $Q$
instead of $q$ and $U_0$ instead of $u_0$ by $U$, the comparison
principle entails that $\tilde{u}\le U$ in $Q_\infty$. As $Q\in
(q_1,p/2)$ and $u_0$ satisfies \eqref{x4}, we already know that $U$
has the finite time extinction property by Theorem~\ref{th:idrext}.
Consequently, $\tilde{u}$ and also $u$ are identically zero after a
finite time.
\end{proof}

\noindent The other two extinction ranges, either
$p=p_c$ and $q\in(0,p_c/2)$, or $p\in(1,p_c)$ and $q>0$, have been
already considered in Theorem~\ref{th:warmup} and proved in Section~\ref{sec.decay}.

\subsection{A lower bound at the extinction time: $p\in (p_c,2)$ and $q\in (q_1,p/2)$}

It turns out that a simple modification of the proof of
Theorem~\ref{th:idrext} for $p\in (p_c,2)$ and $q\in (q_1,p/2)$
provides a lower bound on the $L^1$-norm and the $L^\infty$-norm of
$u(t)$ as $t$ approaches the extinction time $T_{\text{e}}$.

\begin{proposition}\label{pr:v2}
Assume that $p\in (p_c,2)$, $q\in (q_1,p/2)$, and that $u_0$
satisfies \eqref{assump} and \eqref{x4} (with $Q=q$). Denoting the
extinction time of the corresponding solution $u$ to
\eqref{eq1}-\eqref{a2} by $T_{\text{e}}$, we have \bear
C\ \left( T_{\text{e}} - t \right)^{(N+1)(q_\star-q)/(p-2q)} & \le & \|u(t)\|_1\,, \qquad t\in (0,T_{\text{e}})\,, \label{v9a}\\
C\ \left( T_{\text{e}} - t \right)^{(p-q)/(p-2q)} & \le &
\|u(t)\|_\infty\,, \qquad t\in (0,T_{\text{e}})\,. \label{v9b} \eear
\end{proposition}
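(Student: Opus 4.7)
The proof extends the extinction argument of Theorem~\ref{th:idrext}(iii) by tracking quantitative information all the way down to the extinction time $T_{\text{e}}$, and then sharpening the outcome through a bootstrap involving \eqref{v2} and \eqref{spirou}.

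First I would revisit the differential inequality $\tilde\tau'(s) + C\tilde\tau(s)^{\gamma}\le 0$ with $\gamma=1/(q\xi\theta)\in(0,1)$ and $\tilde\tau(s)=\tau(s^{(N+1)\xi(q_\star-q)})$, derived in the proof of Theorem~\ref{th:idrext}(iii). A standard ODE comparison, using that $\gamma<1$ and that $\tilde\tau$ vanishes at $\tilde T_{\text{e}}:=T_{\text{e}}^{1/[(N+1)\xi(q_\star-q)]}$, yields the quantitative bound $\tilde\tau(s)^{1-\gamma}\ge (1-\gamma)C(\tilde T_{\text{e}}-s)$ for $s<\tilde T_{\text{e}}$. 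Reverting to $\tau$ via a Taylor expansion of $z\mapsto z^{1/[(N+1)\xi(q_\star-q)]}$ at $T_{\text{e}}$ gives $\tau(s)\ge C_{u_0}(T_{\text{e}}-s)^{1/(1-\gamma)}$ for $s$ close to $T_{\text{e}}$. Combined with the trivial upper bound $\tau(s)\le\|u(s)\|_1^{\lambda}\log(T_{\text{e}}/s)\le C(T_{\text{e}}-s)\|u(s)\|_1^{\lambda}$ (stemming from the monotonicity of $\|u(\cdot)\|_1$ and $\|u(t)\|_1=0$ for $t\ge T_{\text{e}}$), this produces a first lower bound $\|u(s)\|_1\ge C(T_{\text{e}}-s)^{\mu_0}$ for some exponent $\mu_0>0$.

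To upgrade $\mu_0$ to the claimed optimal exponent $\mu^{*}:=(N+1)(q_\star-q)/(p-2q)$, I would bootstrap: given a lower bound $\|u(s)\|_1\ge C(T_{\text{e}}-s)^{\mu}$, inequality \eqref{v2} inverted as $\|u(s)\|_\infty\ge c\|u(s)\|_1^{1/\theta}$ produces a lower bound on $\|u(s)\|_\infty$, and then \eqref{spirou} applied at $t=(T_{\text{e}}+s)/2$, for which $t-s=T_{\text{e}}-t=(T_{\text{e}}-s)/2$, gives $\|u(s)\|_1\ge C\|u(t)\|_\infty^{1/(q\xi)}(t-s)^{N/q}\ge C'(T_{\text{e}}-s)^{\mu/(q\xi\theta)+N/q}$. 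The induced map $\Phi(\mu):=\mu/(q\xi\theta)+N/q$ is an affine contraction since $1/(q\xi\theta)<1$, with unique fixed point $\mu^{*}$; iterating yields \eqref{v9a}, and \eqref{v9b} then follows from one more application of \eqref{v2} together with the identity $\mu^{*}/\theta=(p-q)/(p-2q)$.

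The main obstacle is controlling the multiplicative constants through the bootstrap: each iteration yields $C_{k+1}$ as a power-type function of $C_{k}$, and one must verify that the corresponding recursion converges to a positive limit. The contraction factor $1/(q\xi\theta)<1$ plays the key role here too, ensuring that both the exponents and the constants stabilize and providing the uniform positive constant in \eqref{v9a}.
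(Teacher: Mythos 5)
Your proposal reaches the correct exponents and is essentially sound, but it takes a genuinely different route from the paper. The paper introduces the auxiliary function $\tau(s)=\int_s^{T_{\text{e}}}\|u(t)\|_1^\lambda\,dt$ (no $1/t$ factor this time), uses \eqref{v3} to obtain a differential inequality of the form $\tau'(s)+C(u_0)\,(T_{\text{e}}-s)^{-a}\,\tau(s)^{1/(q\xi\theta)}\le 0$ on $(0,T_{\text{e}})$, rewrites it as $\frac{d}{ds}\bigl[\tau^{1-1/(q\xi\theta)}-C(T_{\text{e}}-s)^{1-a}\bigr]\le 0$ (the two exponents $1/(q\xi\theta)$ and $a$ being strictly less than $1$), and integrates from $t$ to $T_{\text{e}}$; the boundary term at $T_{\text{e}}$ vanishes since $\tau(T_{\text{e}})=0$, which yields a lower bound on $\tau(t)$. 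Combined with the trivial inequality $\tau(t)\le(T_{\text{e}}-t)\|u(t)\|_1^\lambda$ this gives \eqref{v9a} in a single pass, with the correct exponent produced by one algebraic simplification. Your approach instead begins from the extinction argument of Theorem~\ref{th:idrext}(iii), quantifies it to obtain an initial bound with suboptimal exponent $\mu_0=(p-q)\mu^{*}/q>\mu^{*}$, and then feeds this into the contraction $\mu\mapsto\mu/(q\xi\theta)+N/q$ built from the inverted versions of \eqref{v2} and \eqref{spirou}. Both the exponents $\mu_k$ and the prefactors $C_k$ obey affine recursions in, respectively, $\mu$ and $\log C$ with the same contraction ratio $1/(q\xi\theta)<1$, so they stabilize; this is exactly the issue you flag and it does close. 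Two small points you should make explicit: the first lower bound from the extinction ODE is initially only proved for $s$ near $T_{\text{e}}$ (because of the Taylor expansion of $s\mapsto s^{1/[(N+1)\xi(q_\star-q)]}$), and must be extended to all of $(0,T_{\text{e}})$ via the monotonicity of $\|u(\cdot)\|_1$ before the bootstrap can be run on the whole interval; and passing to the limit $k\to\infty$ in $\|u(s)\|_1\ge C_k(T_{\text{e}}-s)^{\mu_k}$ gives the endpoint bound because $\mu_k\downarrow\mu^{*}$, $C_k\to C_\infty>0$, and for $T_{\text{e}}-s<1$ the map $\mu\mapsto(T_{\text{e}}-s)^{\mu}$ is decreasing. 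What your argument buys is that it avoids guessing the right ansatz $\tau^{1-\gamma}-C(T_{\text{e}}-s)^{1-a}$ and instead reaches the sharp exponent by iterating the already available smoothing/interpolation estimates; the paper's route is shorter and more streamlined once the correct auxiliary quantity is in hand.
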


\begin{proof}
By Theorem~\ref{th:idrext}~(iii), $T_{\text{e}}$ is finite and
$\|u(t)\|_1>0$ for $t\in \left[ 0,T_{\text{e}} \right)$. Setting
$\lambda=q/(N\xi\theta(p-q))$ with $\theta$ defined in \eqref{v4}
and recalling that $q<(p-q)$ as $q<p/2$, it follows from \eqref{v3}
that, for $s\in (0,T_{\text{e}})$, \bean
\tau(s) & := & \int_s^{T_{\text{e}}} \|u(t)\|_1^\lambda\ dt \le C(u_0)\ \|u(s)\|_1^{q\xi\theta\lambda}\  \int_s^{T_{\text{e}}}\frac{dt}{(t-s)^{q/(p-q)}}\\
& \le & C(u_0)\ (-\tau'(s))^{q\xi\theta}\ \left( T_{\text{e}} - s
\right)^{(p-2q)/(p-q)}\,, \eean from which we deduce the following
differential inequality:
\begin{equation*}
\tau(s)^{1/(q\xi\theta)}\le- C(u_0)\ \tau'(s)\ \left( T_{\text{e}} -
s \right)^{(p-2q)/(q(N+1)\xi(q_\star-q))}\,, \end{equation*} whence
$$
\tau'(s) + C(u_0)\ \left( T_{\text{e}} -
s\right)^{-(p-2q)/(q(N+1)\xi(q_\star-q))}\
\tau(s)^{1/(q\xi\theta)}\le 0\,, \qquad s\in \left( 0, T_{\text{e}}
\right)\,.
$$
Since
$$
\frac{1}{q\xi\theta} = 1 - \frac{N(p-2q)}{q(N+1)(q_\star-q)}<1
\quad\mbox{ and }\quad \frac{p-2q}{q(N+1)\xi(q_\star-q)} = 1 -
\frac{N\xi(p-2q) + q}{q(N+1)\xi(q_\star-q)}<1\,,
$$
the above differential inequality also reads
$$
\frac{d}{ds} \left[ \tau(s)^{N(p-2q)/(q(N+1)(q_\star-q))} - C(u_0)\
\left( T_{\text{e}} - s
\right)^{(N\xi(p-2q)+q)/(q(N+1)\xi(q_\star-q))} \right] \le 0
$$
for $s\in \left( 0,T_{\text{e}} \right)$. Integrating the above inequality with respect to $s$ over $\left( t,
T_{\text{e}} \right)$ for $t\in \left( 0, T_{\text{e}} \right)$
gives \bear
C(u_0)\ \left( T_{\text{e}} - t \right)^{(N\xi(p-2q)+q)/(q(N+1)\xi(q_\star-q))} & \le & \tau(t)^{N(p-2q)/(q(N+1)(q_\star-q))} \,, \nonumber\\
C(u_0)\ \left( T_{\text{e}} - t
\right)^{(N\xi(p-2q)+q)/(N\xi(p-2q))} & \le & \tau(t)\,. \label{v10}
\eear Owing to the time monotonicity \eqref{decaymass} of $\|u\|_1$,
we have \beqn \tau(t) = \int_t^{T_{\text{e}}} \|u(s)\|_1^\lambda\
ds\le \left( T_{\text{e}} - t \right)\ \|u(t)\|_1^\lambda\,, \qquad
t\in \left( 0, T_{\text{e}} \right)\,. \label{v11} \eeqn Combining
\eqref{v10} and \eqref{v11} gives \eqref{v9a}. Next, \eqref{v9b}
readily follows from \eqref{v2} and \eqref{v9a}.
\end{proof}

\section{Well-posedness}\label{se:wp}

In this section we study the existence and uniqueness of a solution
to \eqref{eq1}-\eqref{a2}. This is done through an
approximation process, in order to avoid the singularity in the
diffusion.

We begin by stating in a precise form the notion of a viscosity
solution to the singular equation \eqref{eq1}. The standard
definition has been adapted to deal with singular equations in
\cite{IS, OS}, by restricting the comparison functions. We follow
their approach. Let $\cf$ be the set of functions $f\in
C^{2}([0,\infty))$ satisfying
$$
f(0)=f'(0)=f''(0)=0, \ f''(r)>0 \ \hbox{for} \ \hbox{all} \ r>0,
\quad \lim\limits_{r\to0}|f'(r)|^{p-2}f''(r)=0.
$$
For example, $f(r)=r^{\sigma}$ with $\sigma>p/(p-1)>2$ belongs to
$\cf$. We introduce then the class $\ca$ of admissible comparison
functions $\psi\in C^2(Q_{\infty})$ defined as follows: $\psi\in\ca$
if, for any $(t_0,x_0)\in Q_{\infty}$ where $\nabla\psi(t_0,x_0)
=0$, there exist a constant $\delta>0$, a function $f\in\cf$, and a
modulus of continuity $\omega\in C([0,\infty))$, (that is, a
non-negative function satisfying $\omega(r)/r\to 0$ as $r\to 0$),
such that, for all $(t,x)\in Q_{\infty}$ with
$|x-x_0|+|t-t_0|<\delta$, we have
$$
|\psi(t,x)-\psi(t_0,x_0)-\partial_t\psi(t_0,x_0)(t-t_0)|\le f(|x-x_0|)+\omega(|t-t_0|).
$$
\begin{definition}\label{def:vs}
An upper semicontinuous function $u:Q_{\infty}\to\real$ is a
viscosity subsolution to \eqref{eq1} in $Q_{\infty}$ if, whenever
$\psi\in\ca$ and $(t_0,x_0)\in Q_{\infty}$ are such that
\begin{equation*}
u(t_0,x_0)=\psi(t_0,x_0), \quad u(t,x)<\psi(t,x), \ \mbox{for all}\
(t,x)\in Q_{\infty}\setminus\{(t_0,x_0)\},
\end{equation*}
then
\begin{equation}
\left\{\begin{array}{ll}\partial_t\psi(t_0,x_0)\leq\Delta_{p}\psi(t_0,x_0)-|\nabla\psi(t_0,x_0)|^{q} &
\ \hbox{if} \ \nabla\psi(t_0,x_0)\neq0,\\
\partial_t \psi(t_0,x_0)\leq 0 & \ \hbox{if} \
\nabla\psi(t_0,x_0)=0.\end{array}\right.
\end{equation}
A lower semicontinuous function $u:Q_{\infty}\to\real$ is a
viscosity supersolution to \eqref{eq1} in $Q_{\infty}$ if $-u$ is a
viscosity subsolution to \eqref{eq1} in $Q_{\infty}$. A continuous function
$u:Q_{\infty}\to\real$ is a viscosity solution to \eqref{eq1} in
$Q_{\infty}$ if it is a viscosity subsolution and supersolution.
\end{definition}

We refer to \cite{OS} for basic results about viscosity solutions;
in particular the comparison principle is \cite[Theorem 3.9]{OS} and
the stability property with respect to uniform limits is
\cite[Theorem 6.1]{OS}. We are now ready to state the main result of
this section.
\begin{theorem}\label{th:wp}
Given an initial condition $u_0$ satisfying \eqref{assump} there is
a unique non-negative viscosity solution $u$ to
\eqref{eq1}-\eqref{a2} which satisfies the gradient estimates stated
in Theorems~\ref{th:grad1}, \ref{th:grad2} and~\ref{th:grad3}
according to the range of $(p,q)$. In addition, $u$ is a weak
solution to \eqref{eq1}-\eqref{a2}, that is,
\begin{equation}\label{weakform}
\int (u(t,x)-u(s,x))\ \vartheta(x)\ dx + \int_s^t \int \left(
|\nabla u|^{p-2}\ \nabla u \cdot \nabla\vartheta + |\nabla u|^q\
\vartheta \right)\ dxd\tau = 0
\end{equation}
for $t>s\ge 0$ and all $\vartheta\in \mathcal{C}_0^\infty(\RR^N)$
and satisfies
\begin{equation}\label{decaymass}
\|u(t)\|_1 + \int_s^t \int |\nabla u(\tau,x)|^q\ dxd\tau \le
\|u(s)\|_1\,.
\end{equation}
\end{theorem}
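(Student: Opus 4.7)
The plan is to construct the solution by a double approximation that simultaneously removes the singularity of $-\Delta_p$ and the non-smoothness of $|\nabla u|^q$. Fix $\varepsilon \in (0,1)$ and $R > 1$, and consider on $(0,\infty) \times B_R$ the regularized problem
\begin{equation*}
\partial_t u_{\varepsilon,R} - \operatorname{div}\!\left( (|\nabla u_{\varepsilon,R}|^2 + \varepsilon^2)^{(p-2)/2} \nabla u_{\varepsilon,R} \right) + \bigl((|\nabla u_{\varepsilon,R}|^2 + \varepsilon^2)^{q/2} - \varepsilon^q\bigr) = 0,
\end{equation*}
with zero Dirichlet boundary condition and a smooth initial datum $u_{0,\varepsilon,R}$ compactly supported in $B_R$, dominated by $u_0$, and converging to $u_0$ in $L^1 \cap W^{1,\infty}$ as $R\to\infty$ and $\varepsilon \to 0$. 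The operator is now uniformly parabolic with smooth coefficients, so classical quasilinear theory (Ladyzhenskaya--Solonnikov--Ural'tseva) gives a unique non-negative classical solution $u_{\varepsilon,R}$. The comparison principle then yields the uniform bounds $0 \le u_{\varepsilon,R} \le \|u_0\|_\infty$ and, exactly as in the derivation of \eqref{prunelle}, $\|\nabla u_{\varepsilon,R}(t)\|_\infty \le \|\nabla u_0\|_\infty$.

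Next, I apply the Bernstein method of Section~\ref{sec.grad} rigorously to $u_{\varepsilon,R}+\delta$ with $\delta \in (0,1)$. This is exactly the rigorous version alluded to at the end of the proof of Lemma~\ref{techlemma}: the coefficients $a_\varepsilon(r)=(r+\varepsilon^2)^{(p-2)/2}$ and $b_\varepsilon(r)=(r+\varepsilon^2)^{q/2}-\varepsilon^q$ are now $C^2$-smooth, so Lemma~\ref{techlemma} applies with additional $\varepsilon$-contributions that enter with a favorable sign. The shift by $\delta>0$ keeps $\varphi'(u_{\varepsilon,R}+\delta)$ strictly positive, making $v = \varphi^{-1}(u_{\varepsilon,R}+\delta)$ well-defined. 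One then checks that the time-dependent supersolutions $W$ constructed in Sections~\ref{subsec.grad.est1}--\ref{subsec.grad.HJ} remain supersolutions for the perturbed operator (the $\varepsilon$-perturbation only improves the inequality $LW\ge 0$), which yields the gradient bounds of Theorems~\ref{th:grad1}--\ref{th:grad3} for $u_{\varepsilon,R}+\delta$ with constants depending only on $N$, $p$, $q$, and $\|u_0\|_\infty$, hence independent of $\varepsilon$, $R$, and $\delta$.

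The limit is then taken in two stages. Letting $R\to\infty$ along a subsequence, the uniform $L^\infty$ and Lipschitz bounds together with the equation provide local equicontinuity in $(t,x)$, and an Arzel\`a--Ascoli argument with diagonal extraction produces a limit $u_\varepsilon$ on $Q_\infty$ which is a classical bounded Lipschitz solution of the regularized equation in the whole space and inherits the Bernstein estimates. Letting then $\varepsilon\to 0$ along a subsequence, the same compactness argument yields a locally uniform limit $u \in \mathrm{Lip}_{loc}(Q_\infty)$. By the stability property of viscosity solutions \cite[Theorem~6.1]{OS}, $u$ is a viscosity solution of \eqref{eq1}-\eqref{a2} in the sense of Definition~\ref{def:vs}, and the gradient estimates pass to the limit; letting finally $\delta\to 0$ as in Remark~\ref{rem:1} gives the stated bounds on the positivity set. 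Uniqueness is an immediate consequence of the comparison principle \cite[Theorem~3.9]{OS}. To recover \eqref{weakform}, multiply the regularized equation by $\vartheta \in C_0^\infty(\RR^N)$ and integrate in space-time: the time derivative and zero-order terms pass to the limit by dominated convergence, while the diffusion and absorption terms require the strong $L^s_{loc}$-convergence of $\nabla u_\varepsilon$ to $\nabla u$ for some $s\in [1,\infty)$. This is the main obstacle: it is handled by the monotonicity technique of Boccardo--Gallou\"et, testing the difference of the regularized equations against a truncation of $u_{\varepsilon}-u_{\varepsilon'}$ and exploiting the $(p-1)$-monotonicity of $\xi\mapsto (|\xi|^2+\varepsilon^2)^{(p-2)/2}\xi$ together with the uniform Lipschitz bound, which compensates the fact that $p-1<1$. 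Finally, the energy estimate \eqref{decaymass} is obtained by choosing $\vartheta=\vartheta_R$ in \eqref{weakform}, with $\vartheta_R$ the cutoff used in Proposition~\ref{le:u1}: H\"older's inequality combined with the uniform Lipschitz bound shows that $\int_s^t\!\int \nabla\vartheta_R \cdot |\nabla u|^{p-2}\nabla u\,dx d\tau \to 0$ as $R\to\infty$, while Fatou's lemma delivers the desired inequality on the $|\nabla u|^q$ term.
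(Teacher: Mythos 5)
Your overall construction (regularize with $a_\varepsilon,b_\varepsilon$, run the Bernstein machinery on the regularized problem, pass to the limit, invoke the stability and comparison theorems of \cite{OS}, and recover the weak form via a.e.\ convergence of gradients) is the same strategy that the paper uses in Section~\ref{se:wp}, and the two deviations you make (truncating to $B_R$ first, and shifting the solution by an independent parameter $\delta$ rather than lifting the initial datum by $\varepsilon^\gamma$ as the paper does) are legitimate, if cosmetic, reorganizations.

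However, your central technical claim is wrong, and it hides exactly the part of the argument that is hard. You assert that ``the $\varepsilon$-perturbation only improves the inequality $LW\ge 0$''. This is not the case: when Lemma~\ref{techlemma} is applied rigorously with $a_\varepsilon,b_\varepsilon$, the coefficients decompose as $\tilde R_1^\varepsilon=(p-1)R_1^\varepsilon+\varepsilon^2 R_{11}^\varepsilon$ and $\tilde R_2^\varepsilon=(q-1)R_2^\varepsilon+R_{21}^\varepsilon$, and the extra contributions $R_{11}^\varepsilon$ and $R_{21}^\varepsilon$ are \emph{not} signed favorably. For instance, in the case $p>p_c$, $q\ge p/2$ one only has the lower bound $R_{11}^\varepsilon w_\varepsilon\ge -C\,\varepsilon^{-2\gamma} g_\varepsilon^{p-2}$ (and in your normalization $-C\,\delta^{-2} g_\varepsilon^{p-2}$), which after multiplying by the $\varepsilon^2$ prefactor contributes a genuinely negative linear term $-C_1 \varepsilon^{p-2\gamma} w_\varepsilon$ (respectively $-C_1 \varepsilon^2\delta^{-2}g_\varepsilon^{p-2}$) to the differential inequality for $w_\varepsilon$. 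Because of this defect, the model supersolutions $W(t)=K t^{-2/p}$ from Sections~\ref{subsec.grad.est1}--\ref{subsec.grad.HJ} are \emph{no longer} supersolutions of the perturbed operator: one must enlarge the constant to $K=K_\ast\bigl(1+C\varepsilon^{p/2}\bigr)^{2/p}$ (so the diffusion term is strictly overcritical) and, even then, the supersolution property only holds on a time interval that is bounded (of the form $t<\varepsilon^{(4\gamma-p)/2}$, or in your parameterization $t\lesssim\varepsilon^{p/2-2}\delta^2$) and only grows to $(0,\infty)$ in the limit. The same care is needed for $R_{21}^\varepsilon$ in the range $q<1$. Working this out, and verifying that all corrections vanish in the right order of limits with constants independent of $\varepsilon$ and $\delta$, is precisely what occupies the paper's Sections~\ref{newsubsub1}--\ref{newsubsub6}. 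As written, your proof skips this entirely.

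Two smaller remarks. First, the strong convergence of gradients the paper uses is the Boccardo--Murat almost-everywhere convergence theorem \cite[Theorem~4.1]{BM92}, not Boccardo--Gallou\"et; these are different tools (the latter is for measure-data $W^{1,s}$ estimates and is not the right reference here). Second, your $\delta$-shift is a perfectly sensible alternative to the paper's $\varepsilon^\gamma$-lift of the initial condition, and taking $\varepsilon\to0$ with $\delta$ fixed before $\delta\to0$ does decouple the two small parameters, but it does not remove the sign problem above; it only changes the form of the negative error terms. Either way the perturbed supersolutions and the restriction to growing time intervals are unavoidable.
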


\begin{remark}\label{rem:vp}
In fact the existence result can be extended to a larger class of
initial data, namely $u_0\in BC(\real^N)$. This can be proved by
further regularization and arguing as in \cite{GGK03}.
\end{remark}

The rest of the section is devoted to the proof of Theorem
\ref{th:wp}. This will be divided into several steps.

\subsection{Approximation}

In a first step, we have to introduce a regularization of
\eqref{eq1} in order to avoid the problems coming from the
singularity at points where $\nabla u=0$ and from the possible lack
of regularity of the solutions. For $\e\in(0,1/2)$, we let
\begin{equation}\label{approx}
a_{\e}(\xi):=(\xi+\e^2)^{(p-2)/2}, \quad
b_{\e}(\xi):=(\xi+\e^2)^{q/2}-\e^q, \qquad \xi\ge 0\,,
\end{equation}
and consider the following Cauchy problem
\begin{equation}\label{approx.eq}
\left\{\begin{array}{ll}\partial_{t}u_{\e}-\hbox{div}(a_{\e}(|\nabla
u_{\e}|^2)\nabla u_{\e})+b_{\e}(|\nabla u_{\e}|^2)=0, \ (t,x)\in Q_{\infty}\,,\\
u_{\e}(0,x)=u_{0\e}(x)+\e^{\gamma}, \
x\in\real^N\,,\end{array}\right.
\end{equation}
where $\gamma\in(0,p/4)\cap (0,q/2)$ is a small parameter such that
$\gamma<\min{\{ p-1, 1-k \}}$ and $u_{0\e}\in C^\infty(\RR^N)$ is a
non-negative smooth approximation of $u_0$ satisfying \beqn \|
u_{0\e}\|_\infty \le \|u_0\|_\infty \ \mbox{ and }\ \|\nabla
u_{0\e}\|_\infty \le (1+C(u_0) \e) \|\nabla u_0\|_\infty
\label{new0} \eeqn and such that $(u_{0\e})$ converges to $u_0$
uniformly in compact subsets of $\RR^N$. Further smallness
conditions on $\gamma$ and $\varepsilon$ will appear in the sequel
and will be stated wherever needed. By standard existence results
for quasilinear parabolic equations \cite{LSU}, \eqref{approx.eq}
has a unique classical solution $u_{\e}\in
C^{(3+\delta)/2,3+\delta}([0,\infty)\times\real^N)$ for some
$\delta\in(0,1)$. By comparison with constant solutions
$\e^{\gamma}$ and $\e^{\gamma}+\|u_0\|_{\infty}$, we find
\begin{equation}\label{approx.est}
\e^{\gamma}\le u_{\e}(t,x)\le\e^{\gamma}+\|u_0\|_{\infty}, \
(t,x)\in Q_{\infty}.
\end{equation}
We now turn to estimates for the gradient of $u_{\e}$. Let $\varphi$
be a $C^3$-smooth monotone function with inverse $\psi=\varphi^{-1}$
and set $\varrho=1/\psi'$. Defining $v_\e:=\varphi^{-1}(u_\e)$ and
$w_\e:=|\nabla v_\e|^2$, the regularity of $a_{\e}$, $b_{\e}$, and
$u_{\e}$ allows us to apply \cite[Lemma~2.1]{BtL08} and obtain that
$w_\e$ satisfies the differential inequality
\begin{equation}\label{new1}
\partial_t w_\e - A_\e w_\e - B_\e \cdot \nabla w_\e + 2 \tilde{R}_1^\e\ w_\e^2 + 2 \tilde{R}_2^\e\ w_\e \le 0 \;\mbox{ in }\; Q_\infty\,,
\end{equation}
with
\begin{eqnarray*}
A_\e w_\e & := & a_\e \Delta w_\e + 2 a_\e'\ (\nabla u_\e)^{t}D^{2}w_\e\nabla u_\e\,, \\
\tilde{R}_1^\e & := & - a_\e\ \left( \frac{\varphi''}{\varphi'} \right)' - \left( (N-1)\ \frac{(a_\e')^2}{a_\e} + 4 a_\e'' \right)\ (\varphi' \varphi'')^2\ w_\e^2 \\
& & \ - 2\ a_\e'\ \left( 2 (\varphi'')^2 + \varphi'\ \varphi''' \right) w_\e\,, \\
\tilde{R}_2^\e & := & \frac{\varphi''}{(\varphi')^2}\ \left( 2 b_\e' (\varphi')^2 w_\e - b_\e \right)\,,
\end{eqnarray*}
in which we have omitted to write the dependence of $a_\e$ and
$b_\e$ upon $|\nabla u_\e|^2$ and that of $\varphi$ upon $v_\e$.
Setting $g_\e:=(|\nabla u_{\e}|^2+\e^2)^{1/2}$, we have $|\nabla
u_{\e}|^2=g_\e^2-\e^2$ and we proceed as in Section~\ref{sec.grad}
to compute $\tilde{R}_1^\e$ and $\tilde{R}_2^\e$:
\begin{equation}\label{approx.R}
\begin{split}
&\tilde{R}_1^\e := (p-1)\ R_1^\e + \e^2\ R_{11}^\e \;\mbox{ with }\;  R_{1}^{\e}:=g_\e^{p-2}\left[k\ \varrho'(u_{\e})^2-(\varrho\varrho'')(u_{\e})\right],\\
&\tilde{R}_2^\e := (q-1)\ R_2^\e + R_{21}^\e \;\mbox{ with }\; R_2^{\e}:=\left( \frac{\varrho'}{\varrho} \right)(u_{\e})\ g_\e^q,
\end{split}
\end{equation}
and
\begin{equation}\label{r11}
\begin{split}
R_{11}^\e &:=\left[(p-2)\varrho\varrho''-(p-1)k(\varrho')^2\right](u_{\e})\ g_\e^{p-4}\\
&+\frac{(2-p)[2(N+7)-p(N+3)]}{4}\ \varrho'(u_{\e})^{2}\ g_\e^{p-6}\ (g_\e^2-\e^2) , \\
R_{21}^\e &:= \left( \frac{\varrho'}{\varrho} \right)(u_{\e})\ \left( \e^q - q \e^2 \ g_\e^{q-2} \right),
\end{split}
\end{equation}
After these preliminary calculations, we are ready to prove gradient
estimates for $u_{\e}$, that will give a rigorous proof of the
gradient estimates listed in Theorems~\ref{th:grad1},
\ref{th:grad2}, and~\ref{th:grad3} after passing to the limit $\e\to
0$ and a tool in the proof of well-posedness. Before the more
sophisticated estimates, let us notice that, taking
$\varrho(r)\equiv1$, we have
$R_1^{\e}=R_{11}^\e=R_{2}^{\e}=R_{21}^\e=0$ and the comparison
principle applied to \eqref{new1} and combined with \eqref{new0}
readily gives
\begin{equation}\label{approx.grad.est0}
\|\nabla u_{\e}(t)\|_{\infty}\leq\|\nabla
u_{0\e}\|_{\infty} \le (1+C(u_0) \e) \|\nabla u_0\|_\infty\,, \quad t\ge 0.
\end{equation}
Consequently,
\begin{equation}\label{new2}
\e \le g_\e \le \e + \|\nabla u_{0\e}\|_{\infty} \le \|\nabla u_0\|_\infty + C(u_0) \e \;\;\mbox{ in }\;\; Q_\infty.
\end{equation}

\subsection{Gradient estimates}\label{subsec.approx.grad}

In this subsection, we prove gradient estimates for $u_{\e}$. We
divide the proof into the same cases as in Section~\ref{sec.grad}.
In all cases, we will follow the four-step scheme: first estimate
the extra term $R_{11}^\e w_\e$, then the influence of the diffusion
term $R_1^{\e} w_\e^2$, then (if needed) the influence of the
absorption terms $R_2^{\e} w_\e$ and $R_{21}^{\e} w_\e$ and finally
find a suitable supersolution, as in the formal derivation performed
in Section~\ref{sec.grad}.

\subsubsection{$p>p_c$ and $q\ge p/2$.}\label{newsubsub1}

As in Section~\ref{subsec.grad.est1} we choose
\begin{equation*}
\varrho(z)=\left(\frac{p^2}{2(2k+p-2)}\right)^{1/p}\ z^{2/p},
\end{equation*}
and we obtain
\begin{equation*}
\begin{split}
R_{11}^\e &=\left(\frac{p^2}{2(2k+p-2)}\right)^{2/p}u_{\e}^{(4-2p)/p}\ g_\e^{p-4}\left\{-\frac{2(2-p)^2}{p^2}-\frac{4k(p-1)}{p^2}\right.\\& \quad\quad \left.+\frac{(2-p)[2(N+7)-p(N+3)]}{p^2}\ \frac{g_\e^2-\e^2}{g_\e^2}\right\}
\geq -C\ u_{\e}^{(4-2p)/p}\ g_\e^{p-4},
\end{split}
\end{equation*}
hence, since
\begin{equation}\label{est.w}
w_\e = \left| \nabla \varphi^{-1}(u_\e) \right|^2 = \frac{|\nabla u_\e|^2}{\varrho(u_\e)^2} = \frac{g_\e^2-\e^2}{\varrho(u_\e)^2}\,,
\end{equation}
\begin{equation*}
R_{11}^\e w_\e \geq -C\ u_{\e}^{(4-2p)/p}\ g_\e^{p-4}\ \frac{g_\e^2-\e^2}{\varrho(u_{\e})^2}=-C\ g_\e^{p-4}(g_\e^2-\e^2)\ u_{\e}^{-2}\geq-\frac{C}{\e^{2\gamma}}\ g_\e^{p-2}
\end{equation*}
by \eqref{approx.est}. Thus, from the formula \eqref{approx.R}, we
deduce
\begin{equation*}
\begin{split}
\tilde{R}_{1}^{\e}\ w_\e^2 &\geq
(p-1) \left( \frac{p^2}{2(2k+p-2)}\right)^{(2-p)/p}\ g_\e^{p-2}u_{\e}^{(4-2p)/p}w_\e^2-C_{1}\e^{2(1-\gamma)}g_\e^{p-2}w_\e\\
&\geq (p-1) \left( \frac{p^2}{2(2k+p-2)}\right)^{(2-p)/p}\
u_{\e}^{(4-2p)/p}\frac{g_\e^p-\e^2
g_\e^{p-2}}{\varrho(u_{\e})^2}w_\e-C_{1}\e^{p-2\gamma}w_\e \\
&\geq (p-1) \left( \frac{p^2}{2(2k+p-2)}\right)^{(2-p)/p}\ u_{\e}^{(4-2p)/p}\frac{g_\e^p-\e^p}{\varrho(u_{\e})^2}w_\e-C_{1}\e^{p-2\gamma}w_\e\\
&\geq (p-1) \left( \frac{p^2}{2(2k+p-2)}\right)^{(2-p)/p}\
u_{\e}^{(4-2p)/p}\varrho(u_{\e})^{p-2}w_\e^{1+p/2}-
C_{1}u_{\e}^{(4-2p)/p}\frac{\e^p}{\varrho(u_{\e})^2}w_\e\\&-C_{1}\e^{p-2\gamma}w_\e
\geq (p-1)\ w_\e^{1+p/2}-C_{1}\e^{p-2\gamma}w_\e,
\end{split}
\end{equation*}
where we have repeatedly used the lower bound in \eqref{new2},
\eqref{approx.est}, and \eqref{est.w}. We also have
\begin{equation}\label{R2est}
\tilde{R}_{2}^{\e}w_\e=Cu_{\e}^{-1}\left[\e^q-q\e^2 g_\e^{q-2}-(1-q)g_\e^q\right]w_\e.
\end{equation}
We need to treat in a different way the cases $q>1$ and $q<1$.

If $q>1$, we notice that $\tilde{R}_2^{\e}w_\e\geq 0$. Indeed, for
$q\ge 1$, we have $q\e^2 g_\e^{q-2} \le q \e g_\e^{q-1} \le \e^q +
(q-1) g_\e^q$ by Young's inequality. Hence, we can simply drop the
effect of this term and deduce from \eqref{new1} and the previous
lower bound on $\tilde{R}_1^{\e}$ that
\begin{equation*}
L_{\e}w_\e :=\partial_{t}w_\e-A_\e w_\e-B_\e\cdot\nabla w_\e+ 2(p-1)\
w_\e^{1+p/2}-C_{1}\e^{p-2\gamma}w_\e\leq0
\end{equation*}
in $Q_{\infty}$. It is then straightforward to check that the
function
\begin{equation*}
W_\e(t)=\left(\frac{2+pC_1\e^{p/2}}{2p(p-1)}\right)^{2/p}t^{-2/p}
\end{equation*}
is a supersolution for the differential operator $L_{\e}$ above in
$(0,\e^{(4\gamma-p)/2})\times\real^N$, provided we choose
$\gamma<p/4$. The comparison principle and the definition
\eqref{notations.cons} of $k$ then ensure that
\begin{equation}\label{approx.grad.est1}
\left|\nabla u_{\e}^{-(2-p)/p}(t,x)\right|\leq \left( \frac{2-p}{p}
\right)^{(p-1)/p}\ \eta^{1/p}\ \left( 1+ C_1\e^{p/2} \right)^{1/p}\
t^{-1/p}
\end{equation}
for any $(t,x)\in(0,\e^{(4\gamma-p)/2})\times\real^N$. Notice that
$4\gamma-p<0$ by the choice of $\gamma$, so that the time interval
of validity of \eqref{approx.grad.est1} increases to $(0,\infty)$ as
$\e\to 0$.

If $q\in [p/2,1)$, we can further estimate the right-hand side of
\eqref{R2est}, taking into account the lower bound $g_\e>\e$, which
implies
\begin{equation*}
R_{21}^\e \ge C\ u_\e^{-1}\ \left( \e^q-q\e^{2}g_\e^{q-2} \right) \ge (1-q) C\ u_\e^{-1}\ \e^q \geq 0,
\end{equation*}
while \eqref{approx.est} and \eqref{est.w} give
\begin{equation*}
\begin{split}
(q-1)\ R_2^{\e}w_\e&\geq
-C_3\ u_{\e}^{-1}\ g_\e^{q}\ w_\e\geq -C_3\ u_{\e}^{-1}\ \left( \e^2+\varrho(u_{\e})^{2}w_\e \right)^{q/2}\ w_\e\\
&\geq - C_3\ u_{\e}^{-1}\ \left( \e^{q}w_\e+\varrho(u_{\e})^qw_\e^{(2+q)/2} \right)  \\
&\geq - C_3\ \e^{q-\gamma}w_\e - C_3\ \left( \|u_0\|_{\infty}+\e^{\gamma} \right)^{(2q-p)/p}\ w_\e^{(2+q)/2},
\end{split}
\end{equation*}
where we have used the form of $\varrho$ and \eqref{approx.est}.
Combining this lower bound with the already obtained lower bound on
$\tilde{R}_{1}^{\e}$, we obtain
\begin{equation*}
\begin{split}
L_{\e}w_\e:=\partial_{t}w_\e -A_\e w_\e-B_\e\cdot\nabla w_\e+C_{1}w_\e^{(2+p)/2} & - C_{3}(\|u_0\|_{\infty}+\e^{\gamma})^{(2q-p)/p}w_\e^{(2+q)/2} \\
& -C_4\ \left( \e^{q-\gamma}+ \e^{p-2\gamma} \right)\ w_\e\leq 0
\end{split}
\end{equation*}
in $Q_\infty$.  We notice that the function
\begin{equation*}
W_\e(t)=C_5\
\left[\left(\|u_0\|_{\infty}+\e^{\gamma}\right)^{2(2q-p)/p(p-q)} +
\e^{2(p-2\gamma)/p} + \e^{2(q-\gamma)/p} \right]
+\left(\frac{4}{pC_1}\right)^{2/p}\ t^{-2/p}
\end{equation*}
is a supersolution for the differential operator $L_{\e}$ in
$Q_\infty$ for a sufficiently large constant $C_5$. By the
comparison principle, we obtain the following gradient estimate:
\begin{equation}\label{approx.grad.est2}
\left|\nabla u_{\e}^{-(2-p)/p}(t,x)\right|\leq
C\ \left[ \left(\|u_0\|_{\infty}+\e^{\gamma}\right)^{(2q-p)/p(p-q)} + \e^{(p-2\gamma)/p} + \e^{(q-\gamma)/p} + t^{-1/p} \right]
\end{equation}
for any $(t,x)\in Q_\infty$.

\subsubsection{$p>p_c$ and $q\in(0,p/2)$.}\label{newsubsub2}

As in Section~\ref{subsec.grad.est2}, we choose  the following function
\begin{equation*}
\varrho(z)=\left(\frac{p-q}{k+p-q-1}\right)^{1/(p-q)}z^{1/(p-q)},
\end{equation*}
recalling that $k+p-q-1>0$ in that case. We estimate
$\tilde{R}_{1}^{\e}$ and $\tilde{R}_{2}^{\e}$ in the same way as in
Section~\ref{newsubsub1}, the only significant difference stemming from the special form of $\varrho$. We have
\begin{equation*}
\begin{split}
R_{11}^\e &=\left(\frac{p-q}{k+p-q-1}\right)^{2/(p-q)}\frac{1}{(p-q)^2}g_\e^{p-4}u_{\e}^{2(q-p+1)/(p-q)}\left[-(2-p)(q-p+1)-k(p-1)\right.\\
&\quad\quad \left.+\frac{(2-p)(2(N+7)-p(N+3))}{4}
\frac{g_\e^2-\e^2}{g_\e^2}\right]\geq - C u_{\e}^{2(q-p+1)/(p-q)}g_\e^{p-4},
\end{split}
\end{equation*}
hence $R_{11}^\e w_\e \geq-C_1\e^{-2\gamma}g_\e^{p-2}$, a similar
estimate as in Section~\ref{newsubsub1} (and with exactly the same
proof relying on \eqref{approx.est} and \eqref{est.w}).
Consequently,  following the same steps as in
Section~\ref{newsubsub1},
\begin{equation*}
\begin{split}
\tilde{R}_{1}^{\e}\ w_\e^2
&\geq C_{2}g_\e^{p-2}u_{\e}^{2(q-p+1)/(p-q)}w_\e^2-C_{1}\e^{2(1-\gamma)}g_\e^{p-2}w_\e\\
&\geq C_{2}u_{\e}^{2(q-p+1)/(p-q)}\frac{g_\e^p-\e^2
g_\e^{p-2}}{\varrho(u_{\e})^2}w_\e-C_{1}\e^{p-2\gamma}w_\e\\&\geq C_{2}u_{\e}^{2(q-p+1)/(p-q)}\frac{g_\e^p-\e^p}{\varrho(u_{\e})^2}w_\e-C_{1}\e^{p-2\gamma}w_\e\\
&\geq
C_{2}u_{\e}^{2(q-p+1)/(p-q)}\varrho(u_{\e})^{p-2}w_\e^{(2+p)/2}-C_{2}u_{\e}^{2(q-p+1)/(p-q)}\frac{\e^p}{\varrho(u_{\e})^2}w_\e-C_{1}\e^{p-2\gamma}w_\e\\
&\geq C_{2}u_{\e}^{(2q-p)/(p-q)}w_\e^{(2+p)/2}-C_{1}\e^{p-2\gamma}w_\e.
\end{split}
\end{equation*}
We next estimate $\tilde{R}_2^{\e}$:
\begin{equation*}
\begin{split}
\tilde{R}_{2}^{\e}w_\e &=C_3 u_{\e}^{-1}\left[\e^q-q\e^2
g_\e^{q-2}-(1-q)g_\e^q\right]w_\e \geq -C_3 u_{\e}^{-1}g_\e^q w_\e\\&\geq - C_3 u_{\e}^{-1} (\e^2+\varrho(u_{\e})^{2}w_\e)^{q/2} w_\e \geq -C_{3} u_{\e}^{-1} \left( \e^{q}w_\e+\varrho(u_{\e})^q w_\e^{(2+q)/2} \right) \\
& \geq-C_{3} \left[ \e^{q-\gamma}w_\e +
u_{\e}^{(2q-p)/(p-q)}w_\e^{(2+q)/2} \right].
\end{split}
\end{equation*}
From \eqref{new1} and these estimates, and taking into account that
$\gamma<p/2<p-q$, we obtain that
\begin{equation*}
L_{\e}w_\e :=\partial_{t}w_\e-A_\e w_\e-B_\e\cdot\nabla
w_\e+C_2 u_{\e}^{(2q-p)/(p-q)} w_\e^{(2+q)/2}\left(w_\e^{(p-q)/2}-C_4\right)-C_5\e^{q-\gamma}w_\e\leq0.
\end{equation*}
We look for a supersolution for $L_{\e}$ of the form
$W_\e(t)=\lambda +\mu t^{-2/p}$. Proceeding as in
Sections~\ref{subsec.grad.est2} and~\ref{newsubsub1}, we find that
\begin{equation*}
\begin{split}
W_\e(t) & =\left( \frac{4C_5}{C_2} \right)^{2/p}\ \left( \|u_0\|_\infty+ \e^\gamma \right)^{2(p-2q)/p(p-q)}\ \e^{2(q-\gamma)/p} + (4C_4)^{2/(p-q)} \\
& + \left(\frac{4}{pC_2}\right)^{2/p}\ \left(\|u_0\|_{\infty}+\e^{\gamma}\right)^{2(p-2q)/p(p-q)}\ t^{-2/p}
\end{split}
\end{equation*}
is a supersolution in $Q_\infty$. We thus obtain the following
gradient estimate
\begin{equation}\label{approx.grad.est3}
|\nabla u_{\e}(t,x)|u_{\e}(t,x)^{-1/(p-q)}\leq
C\left[1+\left(\|u_0\|_{\infty}+\e^{\gamma}\right)^{(p-2q)/p(p-q)}
\left( \e^{(q-\gamma)/p} + t^{-1/p} \right)\right],
\end{equation}
for any $(t,x)\in Q_\infty$. This is the approximation of
\eqref{part.grad.est2}, and the discussion with respect to the sign
of $p-1-q$ is the same as in Section~\ref{subsec.grad.est2} and is
omitted here.

\subsubsection{$p=p_c$.}\label{newsubsub3}

We follow the same general strategy as in the previous cases. The
computations are slightly different since logarithmic terms appear
in the choice of $\varrho$.

For $q>p_c/2$, we take
\begin{equation*}
\varrho(z)=z^{(N+1)/N}(\log M_{\e}-\log z)^{(N+1)/2N}, \quad
M_{\e}=e(\|u_0\|_{\infty}+\e^{\gamma}).
\end{equation*}
Let us notice first that, by \eqref{approx.est},
\begin{equation}\label{ineq}
1\leq\log M_{\e}-\log u_{\e} .
\end{equation}
On the one hand, owing to \eqref{ineq},
\begin{equation*}
\begin{split}
R_1^\e &= \frac{N+1}{4N}\ u_\e^{2/N}\ \left[ 2 (\log M_\e - \log u_\e)^{1/N} + (\log M_\e - \log u_\e)^{-(N-1)/N} \right] \\
& \ge \frac{N+1}{2N}\ u_\e^{2/N}\ g_\e^{p_c-2}   (\log M_\e - \log u_\e)^{1/N} .
\end{split}
\end{equation*}
On the other hand, after direct, but rather long computations, and
dropping, as usual, the last term in the expression \eqref{r11} of
$R_{11}^\e$, we deduce from \eqref{ineq} that
\begin{equation*}
\begin{split}
R_{11}^\e &\geq \frac{N+1}{2N^2}u_{\e}^{2/N}g_\e^{p_c-4}
\left[-2(\log M_{\e}-\log u_{\e})^{(N+1)/N}+ \frac{4N+2}{N+1}\ (\log
M_{\e}-\log u_{\e})^{1/N}\right.\\& \quad\quad \left.+\frac{N-1}{2(N+1)}(\log
M_{\e}-\log
u_{\e})^{-(N-1)/N}\right]\\
&\geq - \frac{N+1}{N^2}\ u_{\e}^{2/N}\ g_\e^{p_c-4} \left( \log M_\e
- \log u_\e \right)^{(N+1)/N},
\end{split}
\end{equation*}
Consequently, thanks to \eqref{approx.est} and \eqref{est.w}, we
have
\begin{equation*}
\begin{split}
R_{11}^\e w_\e &\geq - \frac{N+1}{N^2}\ u_{\e}^{2/N}\ g_\e^{p_c-4}\ \left( \log M_\e - \log u_\e \right)^{(N+1)/N} \ \frac{g_\e^2-\e^2}{\varrho(u_\e)^2} \\
& \geq -C_1 u_{\e}^{-2} g_\e^{p_c-4} (g_\e^2-\e^2) \geq -C_{1}\ \e^{-2\gamma} g_\e^{p_c-2}.
\end{split}
\end{equation*}
Using again \eqref{approx.est}, \eqref{new2}, \eqref{est.w}, and
\eqref{ineq}, we can now estimate
\begin{equation*}
\begin{split}
\tilde{R}_{1}^{\e} w_\e^2&\geq
C_2 g_\e^{p_c-2} u_{\e}^{2/N}\ \left( \log M_\e - \log u_\e \right)^{1/N} \ w_\e^2 - C_{1}\e^{2(1-\gamma)} g_\e^{p_c-2} w_\e\\
&\geq
C_2 u_{\e}^{2/N} \frac{g_\e^{p_c}-\e^2 g_\e^{p_c-2}}{\varrho(u_{\e})^2}\ \left( \log M_\e - \log u_\e \right)^{1/N} \ w_\e-C_{1}\e^{p_c-2\gamma}w_\e\\
&\geq
C_2u_{\e}^{2/N}\ \left( \log M_\e - \log u_\e \right)^{1/N}\ \left[ \varrho(u_{\e})^{p_c-2}\ w_\e^{(2+p_c)/2}-\frac{\e^{p_c}}{\varrho(u_{\e})^2}w_\e \right]
-C_1\e^{p_c-2\gamma}w_\e\\
&\geq C_2 w_\e^{(2+p_c)/2} - C_2 u_{\e}^{-2}\ (\log M-\log
u_{\e})^{-1} \e^{p_c}\ w_\e - C_1 \e^{p_c-2\gamma}\ w_\e\\
&\geq C_{2}w_\e^{(2+p_c)/2}-C_1\e^{p_c-2\gamma}w_\e.
\end{split}
\end{equation*}
It remains to estimate $\tilde{R}_2^{\e}$. By direct computation, we find
\begin{equation}\label{new3}
\tilde{R}_2^{\e}=\frac{N+1}{2N u_\e}\ \frac{2\log M_{\e}-2\log u_{\e}-1}{\log
M_{\e}-\log u_{\e}}\ \left[\e^q-q\e^2g_\e^{q-2}-(1-q)g_\e^q\right].
\end{equation}
If $q\geq1$, we have $\e^q - q \e^2 g_\e^{q-2}\ge 0$ (as in
Section~\ref{newsubsub1}), which, together with \eqref{ineq},
implies $\tilde{R}_2^{\e}\geq0$. We can simply drop this term and
end up with
\begin{equation*}
L_{\e}w_\e :=\partial_{t}w_\e-A_\e w_\e-B_\e\cdot\nabla w_\e + C_{2} w_\e^{(2+p_c)/2}-C_1\e^{p_c-2\gamma}w_\e\leq 0
\end{equation*}
in $Q_{\infty}$ by \eqref{new1}. We then argue as in
Section~\ref{newsubsub1} to check that, thanks to the choice of
$\gamma$, the function
\begin{equation*}
W_\e(t)=\left(\frac{2+p_c C_1 \e^{p_c/2}}{p_c
C_2}\right)^{2/p_c}t^{-2/p_c}
\end{equation*}
is a supersolution for the differential operator $L_{\e}$ in
$(0,\e^{(4\gamma-p_c)/2})\times\real^N$. The comparison principle then ensures that
\begin{equation}\label{approx.grad.est4}
\left|\nabla u_{\e}^{-1/N}(t,x)\right|\leq
C\left(1+\e^{N/(N+1)}\right)^{1/p_c}(\log M_{\e}-\log u_{\e}(t,x))^{1/p_c}t^{-1/p_c}
\end{equation}
for any $(t,x)\in (0,\e^{(4\gamma-p_c)/2})\times\real^N$.

If $q\in(p_c/2,1)$, we have to estimate $\tilde{R}_2^{\e}$ more
precisely. Since the mapping $z\mapsto (2z-1)/z$ is increasing in
$(0,\infty)$ and $\e^q - q \e^2 g_\e^{q-2} \ge (1-q) \e^q \ge 0$, it
follows from \eqref{approx.est}, \eqref{ineq}, and \eqref{new3} that
\begin{equation*}
\begin{split}
\tilde{R}_2^{\e} w_\e &\geq - \frac{(1-q)(N+1)}{2Nu_{\e}}\ \frac{2\log M_\e - 2 \log u_\e - 1 }{\log M_\e - \log u_\e}\ g_\e^q\ w_\e \geq - C_3 u_{\e}^{-1} g_\e^q w_\e\\
&\geq -C_3 u_{\e}^{-1} (\e^2+\varrho(u_{\e})^{2}w_\e )^{q/2} w_\e \geq -C_3 u_{\e}^{-1}(\e^{q}w_\e+\varrho(u_{\e})^q w_\e^{(2+q)/2})\\
&\geq -C_3 \e^{q-\gamma} w_\e - C_3 u_{\e}^{(q(N+1)-N)/N} (\log M_{\e}-\log u_{\e})^{q(N+1)/2N} w_\e^{(2+q)/2}.
\end{split}
\end{equation*}
We go on as in Section~\ref{subsec.grad.est3} by noticing that the
function
\begin{equation*}
z\mapsto z^{(q(N+1)-N)/N}(\log M_{\e}-\log z)^{q(N+1)/2N}
\end{equation*}
attains its maximum in the interval
$(0,\|u_{0}\|_{\infty}+\e^{\gamma})$ at $(\|u_0\|_\infty+\e^\gamma)
e^{-(N\xi-1)/2}$, hence we can write:
\begin{equation*}
\tilde{R}_2^{\e}\geq-C_4\e^{q-\gamma} w_\e - C_4\ (\|u_0\|_{\infty}+\e^{\gamma})^{(q(N+1)-N)/N}\ w_\e^{(2+q)/2}.
\end{equation*}
It follows that
\begin{equation*}
L_{\e}w_\e :=\partial_{t}w_\e-A_\e w_\e-B_\e\cdot\nabla
w_\e+C_2w_\e^{(2+p_c)/2}-C_4(\|u_0\|_{\infty}+\e^{\gamma})^{1/N\xi}w_\e^{1+q/2}-C_4\e^{q-\gamma}w_\e\leq0
\end{equation*}
in $Q_{\infty}$ since $\gamma<p_c-1<p_c-q$. We notice that the
function
\begin{equation*}
\begin{split}
W_\e(t) &=\left( \frac{4C_4}{C_2} \right)^{2/(p_c-q)}\ \left(\|u_0\|_{\infty}+\e^{\gamma}\right)^{2/(p_c-q)N\xi} + \left( \frac{4C_4}{C_2} \right)^{2/p_c}\ \e^{2(q-\gamma)/p_c}\\
& + \left(\frac{4}{p_c C_2}\right)^{2/p_c}t^{-2/p_c}
\end{split}
\end{equation*}
is a supersolution in $Q_\infty$. By the comparison principle, we obtain
\begin{equation*}
\left|\nabla u_{\e}^{-1/N}(t,x)\right| \leq
C\ \left[ \left( \|u_0\|_{\infty}+\e^{\gamma}\right)^{1/(p_c-q)N\xi} + \e^{(q-\gamma)/p_c} + t^{-1/p_c} \right]\
\left( \log\left( \frac{M_\e}{u_\e(t,x)} \right) \right)^{1/p_c}
\end{equation*}
for any $(t,x)\in Q_\infty$.

\medskip

For $q=p_c/2$, following the idea in Section~\ref{subsec.grad.est3}, we choose
\begin{equation*}
\varrho(z)=z^{(N+1)/N}(\log M_{\e}-\log z)^{(N+1)/N}, \quad
M_{\e}=e(\|u_0\|_{\infty}+\e^{\gamma}).
\end{equation*}
Proceeding as in the previous cases, we infer from
\eqref{approx.est}, \eqref{est.w}, and \eqref{ineq} that
\begin{eqnarray*}
R_{11}^\e w_\e & \ge & - \frac{2(N+1)}{N^2}\ \e^{-2\gamma}\ g_\e^{p_c-2} \,, \\
R_1^\e & \ge & \frac{N+1}{N}\ u_\e^{2/N}\ \left( \log M_\e - \log u_\e \right)^{(N+2)/N}\ g_\e^{p_c-2}\,,
\end{eqnarray*}
so that
\begin{equation*}
\tilde{R}_1^\e\ w_\e^2 \ge \frac{N-1}{N}\ \left( \log M_\e - \log u_\e \right)\ w_\e^{(2+p_c)/2} -C_1\ \e^{p_c-2\gamma}\ w_\e\,,
\end{equation*}
while
\begin{equation*}
\tilde{R}_2^\e\ w_\e \ge - \frac{\e^{q-\gamma}}{N}\ w_\e - \frac{1}{N}\ \left( \log M_\e - \log u_\e \right)\ w_\e^{(2+q)/2} \,.
\end{equation*}
Using a comparison argument as before we end up with the following
estimate
\begin{equation*}
\left|\nabla u_\e^{-1/N}(t,x)\right|\leq C\ \left( \log M_{\e}-\log u_{\e}(t,x) \right)^{(N+1)/N}\ \left( 1+\e^{(q-\gamma)/p_c} + t^{-1/p_c} \right)
\end{equation*}
for any $(t,x)\in Q_\infty$.

\medskip

Finally, if $q\in (0,p_c/2)$, we proceed as in
Section~\ref{newsubsub2} to show that \eqref{approx.grad.est3} holds
true.

\subsubsection{$p<p_c$ and $q>1-k$.}\label{newsubsub4}

We slightly modify the function $\varrho$ from the formal proof in
Section~\ref{subsec.grad.est4} and define the function $\varrho_\e$
by
\begin{equation}\label{new11}
\left( \frac{(2-p-2k) K_\e^p}{2} \right)^{1/2}\ \int_0^{\varrho_\e(r)/K_\e} \frac{dz}{z^k \left( 1- z^{2-p-2k} \right)^{1/2}} = r
\end{equation}
for $r\in \left[ 0, \|u_0\|_\infty +\e^\gamma \right]$, where
\begin{equation}\label{new12}
\left( \frac{(2-p-2k) K_\e^p}{2} \right)^{1/2}\ \int_0^1 \frac{dz}{z^k \left( 1- z^{2-p-2k} \right)^{1/2}} = \|u_0\|_\infty + \e^\gamma\,.
\end{equation}
Observe that $K_\e = \kappa \left( \|u_0\|_\infty +\e^\gamma
\right)^{2/p}$ and $K_\e\to K_0$ as $\e\to 0$, the constants
$\kappa$ and $K_0$ being defined in \eqref{est.C}. It readily
follows from \eqref{new11} that $\varrho_\e$ solves \eqref{ODE2}
with $K_\e$ instead of $K_0$ and thus \eqref{ODE} and
\begin{equation}\label{new13}
\varrho_\e(r) \le C\ K_\e^{(2-p-2k)/2(1-k)}\ r^{1/(1-k)}\,, \quad r\in \left[ 0, \|u_0\|_\infty +\e^\gamma \right]\,.
\end{equation}

Now, omitting as before the last term in $R_{11}^\e$ since it is
non-negative, we deduce from \eqref{ODE} and \eqref{ODE2} that
\begin{equation*}
\begin{split}
R_{11}^\e &\geq \left[ (2-p)\ \left(k \varrho_\e'(u_{\e})^2-\varrho_\e''(u_{\e})\varrho_\e(u_{\e}) \right) -k\ \varrho_\e'(u_{\e})^2 \right]\ g_\e^{p-4}\\
& \geq\left[ (2-p)\ \varrho_\e(u_{\e})^{2-p} - C\ K_\e^{2-p-2k}\ \varrho_\e(u_{\e})^{2k} \right]\ g_\e^{p-4} \\
& \geq - C\ K_\e^{2-p-2k}\  \varrho_\e(u_{\e})^{2k}\ g_\e^{p-4}.
\end{split}
\end{equation*}
We then infer from \eqref{new2}, \eqref{est.w}, \eqref{new13}, and
the positivity of $2-p-2k>0$ that
\begin{equation}\label{new14}
R_{11}^\e\  w_\e \geq - C\ K_\e^{2-p-2k}\ \varrho_\e(u_{\e})^{2k-2}\ g_\e^{p-4}\ \left( g_\e^2 - \e^2 \right)
\geq - C\ K_\e^{2-p-2k}\ \varrho_\e(u_{\e})^{2k-2}\ g_\e^{p-2}\,.
\end{equation}
Since $k<1$ and $\varrho_\e$ is increasing, we deduce from
\eqref{approx.est} that
\begin{equation}\label{new15}
\varrho_\e(u_{\e})^{2k-2} \le \varrho_\e\left( \e^\gamma \right)^{2k-2}\,.
\end{equation}
Now, on the one hand, as $2-p-2k>0$, we deduce from \eqref{new11} that
\begin{eqnarray*}
\e^\gamma & = & \left( \frac{(2-p-2k) K_\e^p}{2} \right)^{1/2}\ \int_0^{\varrho_\e(\e^\gamma)/K_\e} \frac{dz}{z^k \left( 1- z^{2-p-2k} \right)^{1/2}} \\
& \le & \left( \frac{2-p-2k}{2} \right)^{1/2}\ K_\e^{1-k}\ \int_0^{\varrho_\e(\e^\gamma)/K_\e} \frac{dz}{z^k \left( K_\e^{2-p-2k} - \varrho_\e(\e^\gamma)^{2-p-2k} \right)^{1/2}} \\
& \le & \left( \frac{2-p-2k}{2 (1-k)^2} \right)^{1/2}\ \frac{\varrho_\e(\e^\gamma)^{1-k}}{\left( K_\e^{2-p-2k} - \varrho_\e(\e^\gamma)^{2-p-2k} \right)^{1/2}}
\end{eqnarray*}
On the other hand, using again the positivity of $2-p-2k$ and $1-k$
and \eqref{new13}, we find that
\begin{equation}\label{new16}
\varrho_\e(\e^\gamma) \le C\ K_\e^{(2-p-2k)/2(1-k)}\ \e^{\gamma/(1-k)} \le \frac{1}{2^{1/(2-p-2k)}}\ K_\e\,,
\end{equation}
provided $\e\le \e_0(\|u_0\|_\infty)$ is chosen suitably small.
Combining \eqref{new15} and \eqref{new16} yields
$$
\e^\gamma \le \left( \frac{2-p-2k}{(1-k)^2} \right)^{1/2}\ \varrho_\e(\e^\gamma)^{1-k} \  K_\e^{-(2-p-2k)/2}\,.
$$
Consequently,
\begin{equation}\label{new17}
\varrho_\e(\e^\gamma)\ge C\ \e^{\gamma/(1-k)}\ K_\e^{(2-p-2k)/2(1-k)}
\end{equation}
which, together with \eqref{new2}, \eqref{new14}, and \eqref{new15} gives
$$
R_{11}^\e\  w_\e \geq - C\ K_\e^{2-p-2k}\ K_\e^{-(2-p-2k)}\ \e^{-2\gamma}\ g_\e^{p-2} \geq -C\ \e^{p-2-2\gamma} \,.
$$
Turning to $R_{1}^{\e}$, it follows from \eqref{ODE},
\eqref{approx.est}, \eqref{new2}, \eqref{est.w}, the monotonicity of
$\varrho_\e$, and \eqref{new17} that
\begin{equation*}
\begin{split}
R_1^{\e}\ w_\e^2 & = \varrho_\e(u_\e)^{2-p}\ g_\e^{p-2}\ w_\e^2 = \varrho_\e(u_\e)^{-p}\ g_\e^{p-2}\ \left( g_\e^2 - \e^2 \right)\ w_\e \\
& \geq \varrho_\e(u_\e)^{-p}\ g_\e^p\ w_\e - \e^p\ \varrho_\e(u_\e)^{-p}\ \ w_\e \geq w_\e^{(2+p)/2} - \e^p\ \varrho_\e(\e^\gamma)^{-p}\ \ w_\e \\
& \geq w_\e^{(2+p)/2} - C\ \e^{p(1-k-\gamma)/(1-k)}\ K_\e^{-p(2-p-2k)/(2-2k)}\ \ w_\e\,.
\end{split}
\end{equation*}
Gathering the above lower bounds on $R_1^\e$ and $R_{11}^\e$, we are lead to
\begin{equation*}
\begin{split}
\tilde{R}_1^{\e}\ w_\e^2& \geq 2(p-1)\ w_\e^{(2+p)/2} - C \left[ \e^{p(1-k-\gamma)/(1-k)}\ K_\e^{-p(2-p-2k)/(2-2k)} + \e^{p-2\gamma} \right]\ w_\e.
\end{split}
\end{equation*}

For $q\geq1$, the influence of $\tilde{R}_2^{\e}$ is a positive term
thanks to the monotonicity of $\varrho_\e$ (as in the previous
cases) and can be omitted. We obtain that
\begin{equation*}
L_{\e} w_\e:=\partial_{t} w_\e- A_\e w_\e - B_\e \cdot\nabla w_\e +
2(p-1)\ w_\e^{(2+p)/2} -C_1\ \mu_\e^2\ w_\e \leq 0 \;\;\mbox{ in
}\;\; Q_\infty\,,
\end{equation*}
with
\begin{equation}\label{new20}
\mu_\e := \e^{p(1-k-\gamma)/(1-k)}\ K_\e^{-p(2-p-2k)/(2-2k)} + \e^{p-2\gamma} \mathop{\longrightarrow}_{\e\to 0} 0
\end{equation}
thanks to the choice of $\gamma$. By noticing that
\begin{equation*}
W_\e(t)=\left(\frac{1+p C_1 \mu_\e}{2p(p-1)}\right)^{2/p}t^{-2/p}\,,
\qquad t>0\,,
\end{equation*}
is a supersolution for the differential operator $L_{\e}$ in
$(0,\mu_\e^{-1})\times\real^N$. The comparison principle then
implies that
\begin{equation*}
\left|\nabla u_{\e}(t,x)\right| \leq C\ \varrho_\e(u_{\e}(t,x))\ (1+\mu_\e)^{1/p}\ t^{-1/p}, \quad (t,x)\in (0,\mu_\e^{-1})\times\real^N)\,,
\end{equation*}
whence
\begin{equation}\label{approx.grad.est7}
\left|\nabla u_{\e}(t,x)\right| \leq C\ \left( \|u_0\|_\infty + \e^\gamma \right)^{(2-p-2k)/p(1-k)}\ u_\e(t,x)^{1/(1-k)}\ (1+\mu_\e)^{1/p}\ t^{-1/p}
\end{equation}
for any $(t,x)\in(0,\mu_\e^{-1})\times\real^N$. This is the
approximation giving, in the limit, the estimates in
Section~\ref{subsec.grad.est4}.

For $q\in[1-k,1)$, we necessarily have $p>p_{sc}=2(N+1)/(N+3)$ and,
recalling that $k<1$, it follows from \eqref{ODE2},
\eqref{approx.est}, \eqref{new2}, \eqref{new17}, and the
monotonicity of $\varrho_\e$ that
\begin{equation*}
\begin{split}
\tilde{R}_2^{\e}\ w_\e & \geq - (1-q)\ \frac{\varrho_\e'(u_{\e})}{\varrho_\e(u_{\e})}\ g_\e^q\ w_\e \geq - C\ K_\e^{(2-p-2k)/2}\ \varrho_\e(u_\e)^{k-1}\ g_\e^q\ w_\e \\
& \geq - C\ K_\e^{(2-p-2k)/2}\ \varrho_\e(u_\e)^{k-1}\ \left( \e^q + \varrho_\e(u_\e)^q\ w_\e^{q/2} \right)\ w_\e \\
& \geq - C\ K_\e^{(2-p-2k)/2}\ \left[ \varrho_\e(\e^\gamma)^{k-1}\ \e^q\ w_\e + \varrho_\e(u_\e)^{q+k-1}\ w_\e^{(2+q)/2} \right]\\
& \geq -C\ \e^{q-\gamma}\ w_\e - C\ K_\e^{(2-p-2k)/2}\ \varrho_\e(\|u_0\|_\infty +\e^\gamma)^{q+k-1}\ w_\e^{(2+q)/2} \\
&\geq - C_2\ \left[ \e^{q-\gamma}\ w_\e + K_\e^{(2q-p)/2}\ w_\e^{(2+q)/2} \right]\,.
\end{split}
\end{equation*}
Combining this lower bound with that for $\tilde{R}_1^\e\ w_\e^2$
established above, we realize that
\begin{equation*}
\begin{split}
L_{\e} w_\e:=\partial_{t} w_\e- A_\e w_\e - B_\e \cdot\nabla w_\e &+ 2(p-1)\ w_\e^{(2+p)/2} \\
& - C_2\ K_\e^{(2q-p)/2}\ w_\e^{(2+q)/2} - \left(  C_1\ \mu_\e^2 + C_2\ \e^{q-\gamma} \right)\ w_\e \leq 0
\end{split}
\end{equation*}
in $Q_\infty$ with $\mu_\e$ defined by \eqref{new20}. We next observe that the function
\begin{equation*}
W_\e(t) = \left(  C_1\ \mu_\e^2 + C_2\ \e^{q-\gamma} \right)^{2/p} + \left( \frac{C_2 K_\e^{(2q-p)/2}}{p-1} \right)^{2/(p-q)} + \left( \frac{2}{p(p-1)} \right)^{2/p}\ t^{-2/p}
\end{equation*}
is a supersolution for the differential operator $L_\e$ in $Q_\infty$ and deduce from the comparison principle and \eqref{new13} that
\begin{equation}\label{approx.grad.est8}
\begin{split}
& |\nabla u_{\e}(t,x)|\ u_\e(t,x)^{-1/(1-k)}\ \left( \|u_0\|_\infty + \e^\gamma \right)^{-(2-p-2k)/p(1-k)} \\
& \leq C\ \left( \mu_\e^{2/p} + \e^{(q-\gamma)/p} + \left( \|u_0\|_\infty + \e^\gamma \right)^{(2q-p)/p(p-q)} +t^{-1/p} \right)
\end{split}
\end{equation}
for any $(t,x)\in Q_\infty$.

\subsection{A gradient estimate related to the Hamilton-Jacobi term}

We prove, using the same approximation as before, the gradient
estimates \eqref{grad.estHJ} and \eqref{grad.estHJ2} formally
established in Section~\ref{subsec.grad.HJ}. As already mentioned,
we assume for simplicity $p>p_{sc}=2(N+1)/(N+3)$ and divide the
proof into two cases.

\subsubsection{$q\in (0,1)$.}\label{newsubsub5}

We set $\varrho(z)=-2(M_{\e}-z)^{1/2}$ for $z\in [0,M_\e]$, where
$M_{\e}:=\|u_0\|_{\infty}+2\e^{\gamma}$. On the one hand, we have
\begin{eqnarray*}
R_{1}^{\e} & = & (k+1)\ g_\e^{p-2}\ (M_{\e}-u_{\e})^{-1}\ge 0\,, \\
R_{11}^\e & \ge & \frac{(N+3)(2-p)^2}{4}\ g_\e^{p-4}\ (M_{\e}-u_{\e})^{-1}\ge 0\,.
\end{eqnarray*}
On the other hand, by \eqref{approx.est}, we have
\begin{equation*}
\begin{split}
R_2^{\e}w_\e &=\frac{1}{2}(M_{\e}-u_{\e})^{-1}\left[(1-q)g_\e^q+q\e^2g_\e^{q-2}-\e^q\right]w_\e\\
&\geq\frac{1-q}{2}(M_{\e}-u_{\e})^{-1}\left(\e^2+\varrho(u_{\e})^2w_\e\right)^{q/2}w_\e - \e^q\ (M_{\e}-u_{\e})^{-1}\ w_\e\\
&\geq \frac{1-q}{2}\ (M_{\e}-u_{\e})^{-1}\ |\varrho(u_{\e})|^q\ w_\e^{(2+q)/2} - \e^{q-\gamma}\ w_\e\\
&\geq 2^{q-1} (1-q)\ (M_{\e}-u_{\e})^{(q-2)/2}\ w_\e^{(2+q)/2} - \e^{q-\gamma}\ w_\e \\
&\geq 2^{q-1} (1-q)\ (\e^{\gamma}+\|u_0\|_{\infty})^{(q-2)/2}\
w_\e^{(2+q)/2} - \e^{q-\gamma}\ w_\e,
\end{split}
\end{equation*}
where we have used the bounds $\e^{\gamma}\leq M_{\e}-u_{\e}\leq
\e^{\gamma}+\|u_0\|_{\infty}$. Therefore
\begin{equation*}
L_{\e}w_\e :=\partial_{t}w_\e - A_\e w_\e - B_\e \cdot \nabla w_\e + C_1\ (\e^{\gamma}+\|u_0\|_{\infty})^{(q-2)/2}\ w_\e^{(2+q)/2} - \e^{q-\gamma}\ w_\e \leq 0
\end{equation*}
in $Q_\infty$. Now, the following function
\begin{equation*}
W_\e(t):=\left(\e^{\gamma}+\|u_0\|_{\infty}\right)^{(2-q)/q}\left(\frac{2+q\e^{q/2}}{qC_1}\right)^{2/q}t^{-2/q}\,,
\quad t>0\,,
\end{equation*}
is a supersolution for the differential operator $L_{\e}$ in
$(0,\e^{(2\gamma-q)/2})\times\real^N$. We then deduce from the
comparison principle that
\begin{eqnarray*}
\left|\nabla u_{\e}(t,x)\right| & \leq & C\ (M_{\e}-u_{\e}(t,x))^{1/2}\ \left(\e^{\gamma}+\|u_0\|_{\infty}\right)^{(2-q)/2q}\ \left( 1+\e^{q/2} \right)^{1/q}\ t^{-1/q} \\
& \le & C\ \left(\e^{\gamma}+\|u_0\|_{\infty}\right)^{1/q}\ \left( 1+\e^{q/2} \right)^{1/q}\ t^{-1/q}
\end{eqnarray*}
for any $(t,x)\in(0,\e^{(2\gamma-q)/2})\times\real^N$.

\subsubsection{$q>1$.}\label{newsubsub6}

We set $\varrho(z)=z^{1/q}$ for $z\ge 0$. Owing to \eqref{new2}, we
have
\begin{equation*}
\begin{split}
\tilde{R}_1^{\e} & \ge \frac{u_{\e}^{(2-q)/2}}{q^2}\ \left[ (p-1)(k+q-1)\ g_\e^{p-2} + \e^2\ \left( (2-p)(q-1)-k(p-1) \right)\ g_\e^{p-4} \right] \\
& \ge \e^2\ \frac{u_{\e}^{(2-q)/2}}{q^2}\ \left[ (p-1)(k+q-1) + (2-p)(q-1)-k(p-1) \right]\ g_\e^{p-4}  \\
& \ge \e^2\ \frac{ (q-1)\ u_{\e}^{(2-q)/2}}{q^2}\ g_\e^{p-4} \ge 0\,.
\end{split}
\end{equation*}
Since we are interested only in the effect of the Hamilton-Jacobi
part, we omit this term. Next, arguing as in \cite{BL99} and using
\eqref{approx.est}, we obtain
\begin{equation*}
\begin{split}
\tilde{R}_2^{\e}\ w_\e & =\frac{1}{qu_{\e}}\ \left[(q-1)g_\e^q+\e^q-q\e^2g_\e^{q-2}\right]w_\e \geq \frac{\min{\{ 1, q-1 \}}}{q u_{\e}}(g_\e^q-\e^q)\ w_\e\\
&\geq C_1\ u_{\e}^{-1}\ \varrho(u_{\e})^q\ w_\e^{(2+q)/2} - \e^q\ u_{\e}^{-1}\ w_\e \geq C_1\ w_\e^{(2+q)/2} - \e^{q-\gamma}\ w_\e\,.
\end{split}
\end{equation*}
We obtain that
\begin{equation*}
L_{\e}w_\e:=\partial_{t} w_\e - A_\e w_\e - B_\e \cdot \nabla w_\e + C_1\ w_\e^{(2+q)/2} - \e^{q-\gamma}\ w_\e \leq 0
\end{equation*}
in $Q_\infty$. Following the same computations as in Section~\ref{subsec.approx.grad}, we notice that the function
\begin{equation*}
W_\e(t):=\left(\frac{2+q \e^{q/2}}{qC_1}\right)^{2/q}\ t^{-2/q}\,, \quad t>0\,,
\end{equation*}
is a supersolution for the differential operator $L_{\e}$ in
$(0,\e^{(2\gamma-q)/2})\times\real^N$. We then infer from the
comparison principle that
\begin{equation}\label{approx.grad.HJ2}
\left|\nabla u_{\e}^{(q-1)/q}(t,x)\right|\leq\frac{q-1}{q}\ \left(\frac{2+q\e^{q/2}}{qC_1}\right)^{1/q}\ t^{-1/q}
\end{equation}
for any $(t,x)\in(0,\e^{(2\gamma-q)/2})\times\real^N$.

\subsection{Existence}

We have to pass to the limit as $\e\to 0$, and to this aim we follow
the lines of \cite[Section~3]{BtL08}. The uniform gradient bound
\eqref{approx.grad.est0} ensures that the family $u_{\e}$ is
equicontinuous with respect to the space variable and we next argue
as in \cite[Lemma~5]{GGK03} to establish the time equicontinuity. As
a consequence, we are in a position to apply the Arzel\`a-Ascoli
theorem and conclude that there exists a limit
$$
u(t,x):=\lim\limits_{\e\to0}u_{\e}(t,x),
$$
with uniform convergence in compact subsets of
$[0,\infty)\times\real^N$. By the stability result for viscosity
solutions \cite[Theorem 6.1]{OS}, we conclude that $u$ is a
viscosity solution for the equation \eqref{eq1} with initial
condition $u_0$, satisfying moreover that
\begin{equation*}
0\leq u(t,x)\leq\|u_0\|_{\infty}.
\end{equation*}
Finally, the dependence on $\e$ in the right-hand side of the
approximate gradient estimates
\eqref{approx.grad.est1}-\eqref{approx.grad.est8} (depending on the
range of the exponents $p$ and $q$) and in the time interval
validity of these estimates allow us to pass to the limit in an
uniform way, while in the left-hand side we can pass to the limit in
the gradient terms in the weak sense. We thus end the proof of the
gradient estimates in Theorems~\ref{th:grad1}, \ref{th:grad2}
and~\ref{th:grad3}. In addition, using \cite[Theorem~4.1]{BM92}, it
can be shown (as in \cite{BtL08}) that
\begin{equation*}
\nabla u_{\e}\to\nabla u \quad \hbox{a.}\ \hbox{e.}\ \hbox{in} \ Q_{\infty}\,,
\end{equation*}
so that $u$ is also a weak solution to \eqref{eq1} and satisfies
\eqref{weakform} and also \eqref{decaymass}. Finally, the uniqueness
assertion follows from \cite[Theorem~3.1]{OS}.

\section*{Acknowledgments}

RI is supported by the ANR project CBDif-Fr ANR-08-BLAN-0333-01.
Part of this work was done while PhL enjoys the hospitality and
support of the Isaac Newton Institute for Mathematical Sciences,
Cambridge, UK.

\bibliographystyle{plain}

\end{document}